\newtheorem{Theorem}{Theorem}[section]
\newtheorem{Definition}[Theorem]{Definition}
\newtheorem{Proposition}[Theorem]{Proposition}
\newtheorem{Lemma}[Theorem]{Lemma}
\newtheorem{Corollary}[Theorem]{Corollary}
\newtheorem{Remark}[Theorem]{Remark}
\newtheorem{Hypothesis}{Hypothesis}
\numberwithin{equation}{section}
\begin{document}

\def\le{\left}
\def\r{\right}
\def\cost{\mbox{const}}
\def\a{\alpha}
\def\d{\delta}
\def\ph{\varphi}
\def\e{\epsilon}
\def\la{\lambda}
\def\si{\sigma}
\def\La{\Lambda}
\def\B{{\cal B}}
\def\A{{\mathcal A}}
\def\L{{\mathcal L}}
\def\O{{\mathcal O}}
\def\bO{\overline{{\mathcal O}}}
\def\F{{\mathcal F}}
\def\K{{\mathcal K}}
\def\H{{\mathcal H}}
\def\D{{\mathcal D}}
\def\C{{\mathcal C}}
\def\M{{\mathcal M}}
\def\N{{\mathcal N}}
\def\G{{\mathcal G}}
\def\T{{\mathcal T}}
\def\R{{\mathcal R}}
\def\I{{\mathcal I}}

\def\bw{\overline{W}}
\def\phin{\|\varphi\|_{0}}
\def\s0t{\sup_{t \in [0,T]}}
\def\lt{\lim_{t\rightarrow 0}}
\def\iot{\int_{0}^{t}}
\def\ioi{\int_0^{+\infty}}
\def\ds{\displaystyle}
\def\pag{\vfill\eject}
\def\fine{\par\vfill\supereject\end}
\def\acapo{\hfill\break}

\def\beq{\begin{equation}}
\def\eeq{\end{equation}}
\def\barr{\begin{array}}
\def\earr{\end{array}}
\def\vs{\vspace{.1mm}   \\}
\def\rd{\reals\,^{d}}
\def\rn{\reals\,^{n}}
\def\rr{\reals\,^{r}}
\def\bD{\overline{{\mathcal D}}}
\newcommand{\dimo}{\hfill \break {\bf Proof - }}
\newcommand{\nat}{\mathbb N}
\newcommand{\E}{\mathbb E}
\newcommand{\Pro}{\mathbb P}
\newcommand{\com}{{\scriptstyle \circ}}
\newcommand{\reals}{\mathbb R}

\title{A Khasminskii type averaging principle for stochastic reaction-diffusion equations \thanks{ {\em Key words and phrases:} Stochastic
reaction diffusion equations, invariant measures, ergodic and
strongly mixing processes, averaging principle.}}

\author{Sandra Cerrai\\
Dip. di Matematica per le Decisioni\\
Universit\`{a} di Firenze\\
Via C. Lombroso 6/17\\
 I-50134 Firenze, Italy
}

%\date{}

\maketitle

\begin{abstract}
We prove that an averaging principle holds for a general class of
stochastic reaction-diffusion systems, having unbounded
multiplicative noise, in any space dimension. We show that the
classical Khasminskii approach for systems with a finite number of
degrees of freedom can be extended to infinite dimensional
systems.
\end{abstract}

\section{Introduction}

Consider the deterministic system with a finite number of degrees
of freedom
\begin{equation}
\label{odeper}\le\{\begin{array}{l}
\ds{\frac{d\hat{X}_\e}{dt}(t)=\e\, b(\hat{X}_\e(t),\hat{Y}_\e(t)),\ \ \ \ \ \hat{X}_\e(0)=x \in\,\mathbb{R}^n,}\\
\vs \ds{\frac{d\hat{Y}_\e}{dt}(t)=g(\hat{X}_\e(t),\hat{Y}_\e(t)),\
\ \ \ \ \hat{Y}_\e(0)=y \in\,\mathbb{R}^k,}
\end{array} \r.
\end{equation}
for some parameter $0<\e<<1$ and some mappings
$b:\mathbb{R}^n\times \mathbb{R}^k\to \mathbb{R}^n$ and
$g:\mathbb{R}^n\times \mathbb{R}^k\to \mathbb{R}^k$. Under
reasonable conditions on $b$ and $g$, it is clear that, as the
parameter $\e$ goes to zero, the first component $\hat{X}_\e(t)$
of the perturbed system \eqref{odeper} converges to the constant
first component $x$ of the unperturbed system, uniformly with
respect to $t$ in a bounded interval $[0,T]$, for any fixed $T>0$.

But in applications what is more interesting is the behavior of
$\hat{X}_\e(t)$ for $t$ in intervals of order $\e^{-1}$ or even
larger. Actually, it is indeed on those time scales that the most
significant changes happen, such as exit from the neighborhood of
an equilibrium point or of a periodic trajectory. With the natural
time scaling $t\mapsto t/\e$, if we set
$X_\e(t):=\hat{X}_\e(t/\e)$ and $Y_\e(t):=\hat{Y}_\e(t/\e)$,
equation \eqref{odeper} can be rewritten as
\begin{equation}
\label{odeper2}\le\{\begin{array}{l}
\ds{\frac{dX_\e}{dt}(t)=b(X_\e(t),Y_\e(t)),\ \ \ \ \ X_\e(0)=x \in\,\mathbb{R}^n,}\\
\vs \ds{\frac{dY_\e}{dt}(t)=\frac 1\e\,g(X_\e(t),Y_\e(t)),\ \ \ \
\ Y_\e(0)=y \in\,\mathbb{R}^k,}
\end{array} \r.\end{equation} and with this time scale the variable $X_\e$ is always referred as
the {\em slow} component and $Y_\e$ as the {\em fast} component.
In particular, the study of system \eqref{odeper} in time
intervals of order $\e^{-1}$ is equivalent to the study of system
\eqref{odeper2} on finite time intervals.

Now, assume that for any $x \in\,\mathbb{R}^n$ there exists the
limit
\begin{equation} \label{ipotesi}
\bar{b}(x)=\lim_{T\to \infty}\frac 1T\int_0^T b(x,Y^x(t))\,dt,
\end{equation}
where $Y^x(t)$ is the fast motion with frozen slow component $x
\in\,\mathbb{R}^n$
\[\frac{d Y^x}{dt}(t)=g(x,Y^x(t)),\ \ \ \ \
Y^x(0)=y.\]
Such a limit exists for example in the case the
function $Y^x(t)$ is periodic.
 Moreover,  assume that the mapping
$\bar{b}:\mathbb{R}^n\to \mathbb{R}^n$ satisfies some reasonable
assumption, for example it is Lipschitz continuous. In this
setting, the {\em averaging principle} says that the trajectory of
$X_\e$ can be approximated by the solution $\bar{X}$ of the
so-called {\em averaged} equation
\[\frac{d\bar{X}}{dt}(t)=\bar{b}(\bar{X}(t)),\ \ \ \ \ \ \bar{X}(0)=x,\]
uniformly in $t \in\,[0,T]$, for any fixed $T>0$. This means that
by averaging principle  a good approximation of the slow motion
can be obtained by averaging its parameters in the fast variables.

\medskip

The theory of averaging, originated by Laplace and Lagrange, has
been applied in its long history in many fields, as for example
celestial mechanics, oscillation theory and radiophysics, and for
a long period it has been used  without a rigorous mathematical
justification. The first rigorous results are due to Bogoliubov
(cfr. \cite{bogo}) and concern both the case of uncoupled systems
and the case of $g(x,y)=g(x)$. Further developments of the theory,
for more general systems, were obtained by Volosov, Anosov and
Neishtadt (to this purpose we refer to \cite{neistadt} and
\cite{volosov}) and a good understanding of the involved phenomena
was obtained by Arnold et al. (cfr. \cite{akn}).

A further  development in the theory of averaging, which is of
great interest in applications, concerns the case of random
perturbations of dynamical systems. For example,  in system
\eqref{odeper} the coefficient $g$ may be assumed to depend also
on a parameter $\omega \in\,\Omega$, for some probability space
$(\Omega,\mathcal{F},\mathbb{P})$, so that the fast variable is a
random process, or even the perturbing coefficient $b$ may be
taken random. Of course, in these cases one has to reinterpret
condition \eqref{ipotesi} and the type of convergence of the
stochastic process $X_\e$ to $\bar{X}$. One possible way is to
require \eqref{ipotesi} with probability $1$, but in most of the
cases this assumption turns out to be too restrictive. More
reasonable is to have \eqref{ipotesi} either in probability or in
the mean, and in this case one expects to have convergence in
probability of $X_\e$ to $\bar{X}$. As far as averaging for
randomly perturbed systems is concerned, it is worthwhile to quote
the important work of Brin, Freidlin and Wentcell  (see \cite{brfr},
\cite{freidlin}, \cite{frven}, \cite{frven2}) and also the work of
Kifer and Veretennikov (see for example \cite{kif1}, \cite{kif2},
\cite{kif3}, \cite{kif4} and \cite{vere}).

An important contribution in this direction is given by  Khasminskii  with his paper 
\cite{khas} appeared in 1968. In this paper  he  has
considered the following system of stochastic differential
equations
\begin{equation}
\label{kha} \le\{
\begin{array}{l}
\ds{dX^\e(t)=A(X^\e(t),Y^\e(t))\,dt+\sum_{r=1}^l
\si^r(X_i^\e(t),Y^\e(t))\,dw_r(t),\ \ \ \ X^\e(0)=x_0,}\\
\vs \ds{dY^\e(t)=\frac 1\e B(X^\e(t),Y^\e(t))\,dt+\frac
1{\sqrt{\e}}\,\sum_{r=1}^l \varphi^r(X^\e(t),Y^\e(t))\,dw_r(t),\ \
\ \ Y^\e(0)=y_0,}
\end{array}\r.
\end{equation}
for some $l$-dimensional Brownian motion
$w(t)=(w_1(t),\ldots,w_l(t))$. In this case the perturbation in
the slow motion is given by the sum of a deterministic part and a
stochastic part
\[\e\,b(x,y)\,dt=\e A(x,y)\,dt+\sqrt{\e}\,\sigma(x,y)dw(t),\]
and the fast motion is described by a stochastic differential
equation.

In \cite{khas} the coefficients $A:\reals^{l_1}\times
\reals^{l_2}\to \reals^{l_1} $ and $\si:\reals^{l_1}\times
\reals^{l_2}\to M(l\times l_1)$ in the slow motion equation are
assumed to be Lipschitz continuous and uniformly bounded in $y
\in\,\mathbb{R}^{l_2}$. The coefficients $B:\reals^{l_1}\times
\reals^{l_2}\to \reals^{l_2} $ and $\varphi:\reals^{l_1}\times
\reals^{l_2}\to M(l\times l_2)$ in the fast motion equation are
assumed to be Lipschitz continuous, so that in particular the fast
equation with frozen slow component $x$
\[dY^{x,y}(t)=B(x, Y^{x,y}(t))\,dt+\sum_{r=1}^l
\varphi^r(x,Y^{x,y}(t))\,dw_r(t),\ \ \ \ Y^{x,y}(0)=y,\] admits a
unique solution $Y^{x,y}$, for any $x \in\,\mathbb{R}^{l_1}$ and $y
\in\,\mathbb{R}^{l_2}$. Moreover, it is assumed that there exist
two mappings $\bar{A}:\reals^{l_1}\to \reals^{l_1} $ and
$\{a_{ij}\}:\reals^{l_1}\to M(l\times l_1)$ such that
\begin{equation}
\label{kha1} \le|\E\,\frac
1T\int_0^TA(x,Y^{x,y}(t))\,dt-\bar{A}(x)\r|\leq
\a(T)\,\le(1+|x|^2\r),
\end{equation}
and for any $i=1,\ldots,l_1$ and $j=1,\ldots,l_2$
\[\le|\E\,\frac
1T\int_0^T\sum_{r=1}^l \si_i^r
\si_j^r(x,Y^{x,y}(t))\,dt-a_{ij}(x)\r|\leq
\a(T)\,\le(1+|x|^2\r),\] for some function $\a(T)$ vanishing as
$T$ goes to infinity.

In his paper, Khasminskii shows that an averaging principle holds
for system \eqref{kha}. Namely, the slow motion $X^\e(t)$
converges in weak sense, as $\e$ goes to zero, to the solution
$\bar{X}$ of the {\em averaged} equation
\[dX(t)=\bar{A}(X(t))\,dt+\bar{\si}(X(t))\,dw(t),\ \ \ \ \
X(0)=x_0,\] where $\bar{\si}$ is the square root of the matrix
$\{a_{ij}\}$.

\bigskip

The behavior of solutions of infinite dimensional systems   on
time intervals of order $\e^{-1}$ is at present not very well
understood, even if applied mathematicians do believe that the
averaging principle holds and usually  approximate of the slow
motion by the averaged motion, also with $n=\infty$. As far as we
know, the literature on averaging for systems with an infinite
number of degrees of freedom is extremly poor (to this purpose we
refer to the papers \cite{seivr}  by Seidler-Vrko\v{c}  and \cite{massei} by Maslowskii-Seidler-Vrko\v{c}, concerning with
averaging for Hilbert-space valued solutions of stochastic evolution equations depending on a small parameter, and to the paper \cite{kukpia} by Kuksin and Piatnitski 
concerning with averaging for a randomly perturbed KdV equation) and almost all
has still to be done.

In the present paper we are trying to extend the Khasminskii
argument to a system with an infinite number of degrees of
freedom.  We are dealing with the following system of stochastic
reaction-diffusion equations on a bounded domain $D\subset
\reals^d$, with $d\geq 1$,

\begin{equation}
\label{eq0} \le\{
\begin{array}{l}
\ds{\frac{\partial u_\e}{\partial t}(t,\xi)=\mathcal{A}_1
u_\e(t,\xi)+b_1(\xi,u_\e(t,\xi),v_\e(t,\xi))+g_1(\xi,u_\e(t,\xi),v_\e(t,\xi))\,\frac{\partial
w^{Q_1}}{\partial t}(t,\xi),}\\
\vs \ds{\frac{\partial v_\e}{\partial t}(t,\xi)=\frac
1\e\le[\mathcal{A}_2
v_\e(t,\xi)+b_2(\xi,u_\e(t,\xi),v_\e(t,\xi))\r]+\frac 1{\sqrt{\e}}
g_2(\xi,u_\e(t,\xi),v_\e(t,\xi))\,\frac{\partial
w^{Q_2}}{\partial t}(t,\xi), }\\
\vs \ds{u_\e(0,\xi)=x(\xi),\ \ \ \ v_\e(0,\xi)=y(\xi),\ \ \ \ \
\xi
\in\,D,}\\
\vs \ds{ \mathcal{N}_{1} u_\e\,(t,\xi)=\mathcal{N}_{2}
v_\e\,(t,\xi)=0,\ \ \ \ t\geq 0,\ \ \ \ \xi \in\,\partial D.}
\end{array}\r.
\end{equation}
for a positive parameter $\e<<1$. The stochastic perturbations are
given by Gaussian noises which are  white in time and colored in
space, in the case of space dimension $d>1$, with covariances
operators $Q_1$ and $Q_2$. The operators $\mathcal{A}_1$ and
$\mathcal{A}_2$ are second order uniformly elliptic operators,
having continuous coefficients on $D$, and the boundary operators
$\mathcal{N}_1$ and $\mathcal{N}_2$ can be either the identity
operator (Dirichlet boundary condition) or a first order operator
satisfying a uniform nontangentiality condition.

In our previous paper \cite{cf}, written in collaboration with M.
Freidlin, we have considered the simpler case of $g_1\equiv 0$ and
$g_2\equiv 1$, and we have proved that an averaging principle is
satisfied by using  a completely different approach based on
Kolmogorov equations and martingale solutions of stochastic
equations, which is more in the spirit of the general method
introduced by Papanicolaou, Strook and Varadhan  in their paper
\cite{papanicolaou} of 1977. Here, we are considering the case of
general reaction coefficients $b_1$ and $b_2$ and diffusion
coefficients $g_1$ and $g_2$, and the method based on the
martingale approach seems to be very complicated to be applied.

We would like to stress that both here and  in our previous paper
\cite{cf} we are considering averaging for randomly perturbed
reaction-diffusion systems, which are of interest in the
description of diffusive phenomena in reactive media, such as
combustion, epidemic propagation, diffusive transport of chemical
species through cells and dynamics of populations. However the
arguments we are using adapt easily to more general models of
semi-linear stochastic partial differential equations.

Together with system \eqref{eq0}, for any $x, y \in\,H:=L^2(D)$ we introduce the fast motion
equation
\[\le\{
\begin{array}{l}
\ds{\frac{\partial v}{\partial t}(t,\xi)=\le[\mathcal{A}_2
v(t,\xi)+b_2(\xi,x(\xi),v(t,\xi))\r]+g_2(\xi,x(\xi),v(t,\xi))\,\frac{\partial
w^{Q_2}}{\partial t}(t,\xi),}\\
\vs \ds{v(0,\xi)=y(\xi),\ \ \ \xi \in\,D, \ \ \ \ \ \ \
\mathcal{N}_{2} v\,(t,\xi)=0,\ \ \ \ t\geq 0,\ \ \ \ \xi
\in\,\partial D,}
\end{array}\r.\]
with initial datum $y$ and frozen slow component $x$, whose solution is
denoted by  $v^{x,y}(t)$. The previous equation has been widely
studied, as far as existence and uniqueness of solutions are
concerned. In Section 3 we introduce the transition semigroup
$P^x_t$ associated with it and, by using methods and results from
our previous paper \cite{cerrai2}, we study its asymptotic
properties and its dependence on the parameters $x$ and $y$ (cfr.
also 
\cite{tesi} and  \cite{cerrai1}).

Under this respect, in addition to  suitable conditions on the
operators $\mathcal{A}_i$ and $Q_i$ and on the coefficients $b_i$
and $g_i$, for $i=1,2$ (see Section \ref{sec2} for all hypotheses), in the spirit of  Khasminskii's work we assume that
there exist a mapping $\a(T)$ which vanishes as $T$ goes to infinity
and two Lipschitz-continuous mappings $\bar{B}_1:H\to H$ and $\bar{G}:H\to {\cal L}(L^\infty(D),H))$ such that for any choice of 
$T>0$, $t\geq 0$ and $x,y \in\,H$
\begin{equation}
\label{ops2bis}
\E\,\le|\frac
1T\int_t^{t+T}\le<B_1(x,v^{x,y}(s)),h\r>_{H}\,ds-\le<\bar{B}_1(x),h\r>_{H}\r|\leq
\a(T)\,(1+|x|_H+|y|_H)\,|h|_{H},
\end{equation}
for any $h \in\,H$,
and
\begin{equation} \label{ops2}\begin{array}{l}
\ds{\le|\frac 1T\int_t^{t+T}\E\,\le<G_1(x,v^{x,y}(s))h,
G_1(x,v^{x,y}(s))k\r>_H\,ds-\le<\bar{G}(x)h,\bar{G}(x)k\r>_H\r|}\\
\vs \ds{\leq \a(T)\,\le(1+|x|^2_H+|y|^2_H\r)\,|h|_{L^\infty(D)}
|k|_{L^\infty(D)},}
\end{array}\end{equation}
for any $h,k \in\,L^\infty(D)$. Here, $B_1$ and
$G_1$ are the Nemytskii operators associated with $b_1$ and $g_1$,
respectively. Notice that, unlike $B_1$ and $G_1$ which are local
operators, the coefficients $\bar{B}$ and $\bar{G}$ are not local. Actually, 
they are defined as general mappings  on $H$ and, also in applications,  there is no reason why they should be composition operators.
 
In Section \ref{sec3}, we describe some remarkable situations in
which conditions \eqref{ops2bis} and \eqref{ops2} are fulfilled: for example when the fast
motion admits a strongly mixing invariant measure $\mu^x$,
for any fixed frozen slow component $x \in\,H$, and the diffusion
coefficient $g_1$ of the slow motion equation is bounded and
non-degenerate.

\medskip

Our purpose is showing that under the above conditions  the slow motion $u_\e$
converges weakly to the solution $\bar{u}$ of the averaged
equation
\begin{equation}
\label{avintro}
\le\{\begin{array}{l} \ds{\frac{\partial
u}{\partial t}(t,\xi)=\mathcal{A}_1 u(t,\xi)+\bar{B}(u)(t,\xi)+
\bar{G}(u)(t,\xi)\,\frac{\partial
w^{Q_1}}{\partial t}(t,\xi), }\\
\vs \ds{u(0,\xi)=x(\xi),\ \ \ \xi \in\,D,\ \ \ \ \ \mathcal{N}_{1}
u\,(t,\xi)=0,\ \ \ t\geq 0,\ \ \ \xi \in\,\partial D,}
\end{array}\r.\end{equation} More precisely, we prove that for any $T>0$ and $\eta>0$
\begin{equation}
\label{limite} {\cal L}(u_\e) \rightharpoonup {\cal L}(\bar{u})\ \ \text{in}\ \ C([0,T];H),\ \ \ \ \ \ \text{as}\ \ \e\to 0,
\end{equation}
(see Theorem \ref{theo5.2}). Moreover, in the case the diffusion coefficient $g_1$ in the slow equation does not depend on the fast oscillating variable $v_\e$, we show that the convergence of $u_\e$ to $\bar{u}$ is in probability, that is for any $\eta>0$
\begin{equation}
\label{proby}
\lim_{\e\to 0}\Pro\le(|u_\e-\bar{u}|_{C([0,T];H)}>\eta\r)=0,
\end{equation}
(see Theorem \ref{averaging}).

In order to prove \eqref{limite}, we have to proceed in several
steps. First of all we show that the family
$\{\mathcal{L}(u_\e)\}_{\e \in\,(0,1]}$ is tight in
$\mathcal{P}(C([0,T];H))$ and this is obtained by a-priori bounds
for processes $u_\e$ in a suitable  H\"older norm with respect to time and in a suitable Sobolev norm 
with respect to space. We would like to stress that, as
we are only assuming \eqref{ops2bis} and \eqref{ops2} and not a law
of large numbers, we also need to prove a-priori bounds for the
conditioned momenta of $u_\e$.

Once we have the tightness of the family
$\{\mathcal{L}(u_\e)\}_{\e \in\,(0,1]}$, we have the weak
convergence of the sequence $\{\mathcal{L}(u_{\e_n})\}_{n
\in\,\nat}$, for some $\e_n\downarrow 0$,  to some probability
measure $\mathbb{Q}$ on $C_x([0,T]:H)$. The next steps consist in
identifying $\mathbb{Q}$ with $\mathcal{L}(\bar{u})$ and proving
that  limit \eqref{limite} holds. To this purpose we introduce the
martingale problem with parameters
$(x,\mathcal{A}_1,\bar{B},\bar{G},Q_1)$ and we show that
$\mathbb{Q}$ is a solution to such martingale problem. As the
coefficients $\bar{B}$ and $\bar{Q}$ are Lipschitz-continuous, we
have uniqueness and hence we can conclude that
$\mathbb{Q}=\mathcal{L}(\bar{u})$. This in particular implies that
for any $\e_n\downarrow 0$ the sequence
$\{\mathcal{L}(u_{\e_n})\}_{n \in\,\nat}$ converges weakly to
$\mathcal{L}(\bar{u})$ and hence \eqref{limite} holds. Moreover, in the case $g_1$ does not depend on $v_\e$, by a uniqueness argument this implies convergence in probability.

In the general case, the key point in the identification of $\mathbb{Q}$ with the
solution of the martingale problem associated with the averaged
equation \eqref{avintro} is the following limit
\[\lim_{\e\to
0}\E\le|\int_{t_1}^{t_2}\E \,\le(\mathcal{L}_{sl}\,\varphi(u_{\e}(r),v_{\e}(r))-
\mathcal{L}_{\text{av}}\,\varphi(u_{\e}(r))\le|\mathcal{F}_{t_1}\r.\r)\,dr\r|=0,\]
 where $\mathcal{L}_{sl}$ and $\mathcal{L}_{av}$ are
 the Kolmogorov operators associated respectively with the slow
motion equation, with frozen fast component, and with the averaged
equation, and $\{\mathcal{F}_t\}_{t\geq 0}$ is the filtration
associated with the noise. Notice that it is sufficient to check the validity of such a limit for any
cylindrical function $\varphi$ and any $0\leq t_1\leq t_2\leq T$.
The proof of the limit above is based on the Khasminskii argument
introduced in \cite{khas}, but it is clearly more delicate than in
\cite{khas}, as it concerns a system with an infinite number of
degrees of freedom (with all well known problems arising from
that).

In the particular case of $g_1$ not depending on $v_\e$, in order to prove \ref{proby} we do not need to pass through the martingale formulation. For any $h
\in\,D(A_1)$ we write
\[\begin{array}{l}
\ds{\le<u_\e(t),h\r>_H=\le<x,h\r>_H+\int_0^t\le<u_{\e}(s),A_1
h\r>_H\,ds+\int_0^t
\le<\bar{B}_1(u_{\e}(s)),h\r>_H\,ds}\\
\vs \ds{+\int_0^t \le<G_1(u_\e(s)) h,dw_1^{Q_1}(s)\r>_H+R_\e(t),}
\end{array}\]
where
\[R_\e(t):=\int_0^t
\le<B_1(u_{\e}(s),v_\e(s))-\bar{B}_1(u_{\e}(s)),h\r>_H\,ds,\] and,
by adapting the arguments introduced by Khasminskii in \cite{khas}
to the present infinite dimensional setting, we show that for any
$T>0$
\begin{equation}
\label{fine33} \lim_{\e\to 0}\,\E\sup_{t \in\,[0,T]}\,|R_\e(t)|=0.
\end{equation}
Thanks to the Skorokhod theorem and to a general argument due to
Gy\"ongy and Krylov (see \cite{gk}), this allows to obtain
\eqref{averaging}.

\section{Assumptions and preliminaries}
\label{sec2}

Let $D$ be a smooth  bounded domain of $\reals^d$, with $d\geq 1$.
Throughout the paper, we shall denote by $H$  the Hilbert space
$L^2(D)$, endowed with the usual scalar product
$\le<\cdot,\cdot\r>_H$ and with the corresponding norm
$|\cdot|_H$. The norm in $L^\infty(D)$ will be denoted by
$|\cdot|_0$.

We shall denote by $B_b(H)$ the Banach space of bounded Borel
functions $\varphi:H\to \reals$, endowed with the sup-norm
\[\|\varphi\|_0:=\sup_{x \in\,H}|\varphi(x)|.\]
$C_b(H)$ is  the subspace of uniformly continuous mappings
 and
$C^k_b(H)$ is the subspace of all $k$-times differentiable
mappings,  having bounded and uniformly continuous derivatives, up
to the $k$-th order, for $k \in\,\nat$. $C^k_b(H)$ is a Banach
space endowed with the norm
\[|\varphi|_k:=|\varphi|_0+\sum_{i=1}^k\sup_{x
\in\,H}|D^i\varphi(x)|_{\mathcal{L}^i(H)}=:|\varphi|_0+\sum_{i=1}^k\,[\varphi]_i,\]
where $\mathcal{L}^1(H):=H$ and, by recurrence,
$\mathcal{L}^i(H):=\mathcal{L}(H,\mathcal{L}^{i-1}(H))$, for any
$i>1$. Finally, we denote by $\text{Lip}(H)$ the set of functions
$\varphi:H\to$ such that
\[[\varphi]_{\text{Lip}(H)}:=\sup_{\substack{x,y \in\,H\\x\neq
y}}\frac{|\varphi(x)-\varphi(y)|}{|x-y|_H}<\infty.\]

We shall denote by $\mathcal{L}(H)$ the space of bounded
linear operators in $H$ and we shall denote by $\mathcal{L}_2(H)$ the subspace of
Hilbert-Schmidt operators, endowed with the norm
\[\|Q\|_{2}=\sqrt{\text{Tr}\,[Q^\star Q]}.\]

\medskip

The stochastic perturbations in the slow and in the fast
 motion
equations \eqref{eq0} are given respectively by the Gaussian
noises $\partial w^{Q_1}/\partial t(t,\xi)$ and $\partial
w^{Q_2}/\partial t(t,\xi)$, for $t\geq 0$ and $\xi \in\,D$, which
are assumed to be white in time and colored in space, in the case
of space dimension $d>1$. Formally, the cylindrical Wiener
processes $w^{Q_i}(t,\xi)$ are defined as the infinite sums
\[w^{Q_i}(t,\xi)=\sum_{k=1}^\infty Q_i e_{k}(\xi)\,\beta_{k}(t),\ \ \ \ i=1,2,\] where
 $\{e_{k}\}_{k \in\,\nat}$ is a
complete orthonormal basis in $H$, $\{\beta_{k}(t)\}_{k
\in\,\nat}$ is a sequence of mutually independent standard
Brownian motions defined on the same complete stochastic basis
$(\Omega,\mathcal{F}, \mathcal{F}_t, \mathbb{P})$ and $Q_i$ is a compact 
linear operator on $H$.

The operators $\mathcal{A}_1$ and $\mathcal{A}_2$ appearing
respectively in the slow and in the fast motion equation, are
second order uniformly elliptic operators, having continuous
coefficients on $D$, and the boundary operators $\mathcal{N}_1$
and $\mathcal{N}_2$ can be either the identity operator (Dirichlet
boundary condition) or a first order operator of the following
type
\[\sum_{j=1}^d \beta_j(\xi) D_j+\gamma(\xi)I,\ \ \ \ \ \xi \in\,\partial D,\]
for some $\beta_j, \gamma \in\,C^1(\bar{D})$ such that
\[\inf_{\xi \in\,\partial
D}\,\le|\le<\beta(\xi),\nu(\xi)\r>\r|>0,\] where $\nu(\xi)$ is the
unit normal at $\xi \in\,\partial D$ (uniform nontangentiality
condition).

The realizations $A_1$ and $A_2$ in $H$ of the differential
operators $\mathcal{A}_1$ and $\mathcal{A}_2$ , endowed respectively with the
boundary conditions $\mathcal{N}_1$ and $\mathcal{N}_2$, generate
two analytic semigroups $e^{t A_1}$ and $e^{t A_2}$, $t\geq 0$. In
what follows we shall assume that $A_1$, $A_2$ and $Q_1$, $Q_2$
satisfy the following conditions.

\begin{Hypothesis}
\label{H1} For $i=1, 2$ there exist a complete orthonormal system
$\{e_{i,k}\}_{k \in\,\nat}$ in $H$ and two sequences of
non-negative real numbers $\{\a_{i,k}\}_{k \in\,\nat}$ and
$\{\la_{i,k}\}_{k \in\,\nat}$,  such that
\[A_i\, e_{i,k}=-\a_{i,k}\, e_{i,k},\ \ \ \ \ Q_i e_{i,k}=\la_{i,k} e_{i,k},\ \ \ k \geq 1,\]
and
\begin{equation}
\label{esistenza} \kappa_i:=\sum_{k=1}^\infty
\la_{i,k}^{\rho_i}\,|e_{i,k}|_0^2<\infty,\ \ \ \ \
\zeta_i:=\sum_{k=1}^\infty
\a_{i,k}^{-\beta_i}\,|e_{i,k}|_0^2<\infty,
\end{equation}
for some  constants $\beta_i \in\,(0,+\infty)$ and $\rho_i
\in\,(2,+\infty]$ such that
\begin{equation}
\label{sola1} \,\frac{\beta_i(\rho_i-2)}{\rho_i}<1.
\end{equation}
Moreover,
\begin{equation}
\label{sola2} \inf_{k \in\,\nat}\,\a_{2,k}=:\la>0.
\end{equation}

\end{Hypothesis}

\begin{Remark}
{\em \begin{enumerate}
\item In several cases, as for example in the case of space dimension $d=1$ and in  the case of the
Laplace operator on a hypercube, endowed with Dirichlet boundary
conditions, the eigenfunctions $e_k$ are equi-bounded in the
sup-norm and then conditions \eqref{esistenza} become
\[\kappa_i=\sum_{k=1}^\infty
\la_{i,k}^{\rho_i}<\infty,\ \ \ \ \ \ \zeta_i=\sum_{k=1}^\infty
\a_{i,k}^{-\beta_i}<\infty,\] for positive constants $\beta_i,
\rho_i$ fulfilling \eqref{sola1}.  In general
\[|e_{i,k}|_\infty\leq c\,k^{a_i},\ \ \ \ \ \ k \in\,\nat,\]
for some $a_i\geq 0$. Thus,   conditions \eqref{esistenza} become
\[\kappa_i:=\sum_{k=1}^\infty
\la_{i,k}^{\rho_i}\,k^{2 a_i}<\infty,\ \ \ \ \ \
\zeta_i:=\sum_{k=1}^\infty \a_{i,k}^{-\beta_i}\,k^{2
a_i}<\infty.\]

\item For any reasonable domain $D\subset \rd$ one has
\[\a_{i,k}\sim k^{2/d},\ \ \ \ \ \ k \in\,\nat.\]
Thus, if the eigenfunctions $e_k$ are equi-bounded in the
sup-norm, we have \[\zeta_i\leq c\sum_{k=1}^\infty
\a_{i,k}^{-\beta_i}\sim\sum_{k=1}^\infty k^{-\frac{2 \beta_i}d}.\]
This means  that, in order to have $\zeta_i<\infty$, we need
\[\beta_i>\frac d 2.\]
In particular, in order to have also $\kappa_i<\infty$ and
condition \eqref{sola1} satisfied, in space dimension $d=1$ we can
take $\rho_i=+\infty$, so that we can deal with white noise, both
in time and in space. In space dimension $d=2$ we can take any
$\rho_i<\infty$ and in space dimension $d\geq 3$ we need
\[\rho_i<\frac{2d}{d-2}.\]
In any case, notice that  it is never required to take $\rho_i=2$,
which means to have a noise with trace-class covariance. To
this purpose, it can be useful to compare these conditions with
Hypothesis 2 and Hypothesis 3 in \cite{cerrai1}.

\end{enumerate}}
\end{Remark}

\medskip

As far as the coefficients $b_1, b_2$ and $g_1, g_2$ are
concerned, we assume the following conditions.

\begin{Hypothesis}
\label{H2}
\begin{enumerate}
\item The mappings $b_i:D\times \mathbb{R}^2\to
\mathbb{R}$ and $g_i:D\times \mathbb{R}^2\to \mathbb{R}$ are
measurable, both for $i=1$ and $i=2$, and  for almost all $\xi
\in\,D$ the mappings $b_i(\xi,\cdot):\mathbb{R}^2\to \mathbb{R}$
and $g_i(\xi,\cdot):\mathbb{R}^2\to \mathbb{R}$ are
Lipschitz-continuous, uniformly with respect to $\xi \in\,D$. Moreover
\[\sup_{\xi \in\,D}|b_2(\xi,0,0)|<\infty.\]

\item It holds
\begin{equation}
\label{uffa1}
\sup_{\substack{\xi\in\,D\\\si_1 \in\,\reals}}\,\sup_{\substack{\si_2,\rho_2 \in\,\mathbb{R}\\
\si_2 \neq
\rho_2}}\,\frac{|b_2(\xi,\si_1,\si_2)-b_2(\xi,\si_1,\rho_2)|}{|\si_2-\rho_2|}=:L_{b_2}<\la,
\end{equation}
where $\la$ is the constant introduced in \eqref{sola2}.

\item There exists  $\gamma<1$  such that
\begin{equation}
\label{growthg2} \sup_{\xi \in\,D}\,|g_2(\xi,\si)|\leq
c\,\le(1+|\si_1|+|\si_2|^\gamma\r),\ \ \ \ \ \si=(\si_1,\si_2)
\in\,\reals^2.
\end{equation}
\end{enumerate}

\end{Hypothesis}

\begin{Remark}
{\em \begin{enumerate}
\item Notice that condition \eqref{growthg2} on the
growth of $g_2(\xi,\si_1,\cdot)$ could be replaced with the condition
\[\sup_{\substack{\xi \in\,D\\ \si_1
\in\,\reals}}[g_2(\xi,\si_1,\cdot)]_{\text{Lip}}\leq \eta,\] for
some $\eta$ sufficiently small.

\item In what follows we shall set 
\[ L_{g_2}:=\sup_{\substack{\xi\in\,D\\\si_1 \in\,\reals}}\,\sup_{\substack{\si_2,\rho_2 \in\,\mathbb{R}\\
\si_2 \neq
\rho_2}}\,\frac{|g_2(\xi,\si_1,\si_2)-g_2(\xi,\si_1,\rho_2)|}{|\si_2-\rho_2|}.\]
\end{enumerate}
}
\end{Remark}

In what follows we shall set
\[B_i(x,y)(\xi):=b_i(\xi,x(\xi),y(\xi))\]
and
\[[G_i(x,y)z](\xi):=g_i(\xi,x(\xi),y(\xi))z(\xi),\]
for any $\xi\in\,D$, $x, y, z \in\,H$ and $i=1,2$. Due to
Hypothesis \ref{H2}, the mappings
\[(x,y) \in\,H\times H\mapsto
B_i(x,y) \in\, H,\] are Lipschitz-continuous, as well as the
mappings
\[(x,y) \in\,H\times H\mapsto G_i(x,y)
\in\,\mathcal{L}(H; L^1(\D))\] and
\[(x,y) \in\,H\times H\mapsto G_i(x,y)
\in\,\mathcal{L}(L^\infty(D); H).\]

\medskip

Now, for any fixed $T>0$ and $p\geq 1$, we  denote by $\mathcal{H}_{T,p}$ the
space of processes in $C([0,T];L^p(\Omega;H))$, which are
adapted to the filtration $\{\F_t\}_{t\geq 0}$ associated with  the noise.
$\mathcal{H}_{T,p}$ is a Banach space, endowed with the norm
\[\|u\|_{\mathcal{H}_{T,p}}=\le(\,\sup_{t
\in\,[0,T]}\,\E\,|u(t)|_H^p\r)^{\frac 1p}.\] Moreover, we denote
by $\mathcal{C}_{T,p}$ the subspace of processes $u
\in\,L^p(\Omega;C([0,T];H))$, endowed with the norm
\[\|u\|_{\mathcal{C}_{T,p}}=\le(\,\E\,\sup_{t
\in\,[0,T]}\,|u(t)|_H^p\r)^{\frac 1p}.\]

\medskip

With all notations we have introduced, system
\eqref{eq0} can be rewritten as the following abstract system
\begin{equation}
\label{astratto} \le\{\begin{array}{l} \ds{du_\e(t)=\le[A_1
u_\e(t)+B_1(u_\e(t),v_\e(t))\r]\,ds+G_1(u_\e(t),v_\e(t))\,dw^{Q_1}(t),\ \
\ \ u_\e(0)=x}\\
\vs \ds{dv_\e(t)=\frac 1\e \le[A_2
v_\e(t)+B_2(u_\e(t),v_\e(t))\r]\,ds+\frac 1{\sqrt{\e}}\,
G_2(u_\e(t),v_\e(t))\,dw^{Q_2}(t),\ \ \ \ v_\e(0)=y.}
\end{array}\r.
\end{equation}

As known from the existing literature (see for example
\cite{dpz1}), according to Hypotheses \ref{H1} and
\ref{H2} for any $\e>0$ and $x,y \in\,H$ and for any
$p \geq 1$ and $T>0$ there exists a unique mild solution
$(u_\e,v_\e) \in\,\mathcal{C}_{T,p}\times \mathcal{C}_{T,p}$ to
system \eqref{eq0}. This means that there exist  two processes
$u_\e$ and $v_\e$  in $\mathcal{C}_{T,p}$, which are unique, such
that
\[ u_\e(t)=e^{t A_1}x+\int_0^t e^{(t-s)A_1}
B_1(u_\e(s),v_\e(s))\,ds+\int_0^t e^{(t-s)A_1}
G_1(u_\e(s),v_\e(s))\,dw^{Q_1}(s),\] and
\[v_\e(t)=e^{t A_2/\e}y+\frac 1\e \int_0^t e^{(t-s)\frac{A_2}\e} B_2(u_\e(s),v_\e(s))\,ds+\frac 1{\sqrt{\e}}
\int_0^t e^{(t-s)\frac{A_2}\e}G_2(u_\e(s),v_\e(s))\,dw^{Q_2}(s).\]

\subsection{The fast motion equation}
\label{subsec21}

For any fixed $x \in\, H$, we consider the problem
\begin{equation}
\label{fast} \le\{
\begin{array}{l}
\ds{\frac{\partial v}{\partial t}(t,\xi)=\mathcal{A}_2
v(t,\xi)+b_2(\xi,x(\xi),v(t,\xi))+g_2(\xi,x(\xi),v(t,\xi))\,\frac{\partial
w^{Q_2}}{\partial t}(t,\xi),}\\
\vs \ds{v(0,\xi)=y(\xi),\ \ \ \xi \in\,D, \ \ \ \ \ \ \
\mathcal{N}_{2} v\,(t,\xi)=0,\ \ \ \ t\geq 0,\ \ \ \ \xi
\in\,\partial D.}
\end{array}\r.
\end{equation}
Under Hypotheses \ref{H1} and \ref{H2}, such a problem
 admits a unique mild solution $v^{x,y}
\in\,\mathcal{C}_{T,p}$, for any $T>0$ and $p\geq 1$, and for any
fixed frozen slow variable $x \in\,H$ and any initial condition $y
\in\,H$ (for a proof see e.g. \cite[Theorem 5.3.1]{dpz2}).

By arguing as in the proof of \cite[Theorem 7.3]{cerrai2}, is it
possible to show that there exists some $\d_1>0$ such that for any
$p\geq 1$
\begin{equation}
\label{sola5} \E\,|v^{x,y}(t)|^p_H\leq c_p\,(1+|x|_H^p+e^{-\d_1 p
t}\,|y|_H^p),\ \ \ \ \ t\geq 0.
\end{equation}
 In particular, as shown in \cite{cerrai2}, this
implies that there exists some $\theta>0$ such that for any $a>0$
\begin{equation}
\label{tight} \sup_{t\geq
a}\,\E\,|v^{x,y}(t)|_{D((-A_2)^\theta)}\leq c_a(1+|x|_H+|y|_H).
\end{equation}

Now, for any $x \in\,H$ we denote by $P^x_t$ the transition semigroup
associated with problem \eqref{fast}, which is defined by
\[P^x_t \varphi(y)=\E\,\varphi(v^{x,y}(t)),\ \ \ \ t\geq 0,\ \ y
\in\,H,\] for any $\varphi \in\,B_b(H)$. Due to \eqref{tight}, the
family $\{\mathcal{L}(v^{x,y}(t))\}_{t\geq a}$ is tight in
$\mathcal{P}(H,\mathcal{B}(H))$ and then by the Krylov-Bogoliubov
theorem,  there exists an invariant measure $\mu^x$ for the
semigroup $P^x_t$. Moreover, due to \eqref{sola5} for any $p\geq
1$ we have
\begin{equation}
\label{sola6} \int_H |z|^p_H\,\mu^x(dz)\leq c_p\,(1+|x|_H^p)
\end{equation}
(for a proof see \cite[Lemma 3.4]{cf}).

 As in \cite[Theorem
7.4]{cerrai2}, it is possible to show that if $\la$ is
sufficiently large and/or $L_{b_2}$, $L_{g_2}$, $\zeta_2$ and
$\kappa_2$ are sufficiently small, then there exist some $c,
\d_2>0$ such that
\begin{equation}
\label{sola3} \sup_{x \in\,H}\,\E\,|v^{x,y_1}(t)-v^{x,y_2}(t)|_H\leq c\,e^{-\d_2
t}\,|y_1-y_2|_H,\ \ \ \ \ t\geq 0,
\end{equation}
for any $y_1,y_2 \in\,H$. In particular, this implies that
$\mu^x$ is the unique invariant measure for $P^x_t$ and  is
strongly mixing. Moreover, by arguing as in \cite[Theorem 3.5 and
Remark 3.6]{cf}, from \eqref{sola6} and \eqref{sola3} we have
\begin{equation}
\label{sola4} \le|P^x_t \varphi(y)-\int_H
\varphi(z)\,\mu^x(dz)\r|\leq c\,\le(1+|x|_H+|y|_H\r)\,e^{-\d_2
t}\,[\varphi]_{\text{Lip}(H)},
\end{equation}
for any $x,y \in\,H$ and $\varphi \in\,\text{Lip}(H)$. In
particular, this implies the following fact.
\begin{Lemma}
\label{conv}
 Under the above conditions, for any $\varphi
\in\,\text{{\em Lip}}(H)$, $T>0$, $x,y \in\,H$  and $t\geq 0$
\begin{equation}
\label{sola9} \E\,\le|\frac
1T\int_t^{t+T}\varphi(v^{x,y}(s))\,ds-\int_H
\varphi(z)\,\mu^x(dz)\r|\leq
\frac{c}{\sqrt{T}}\,\le(H_\varphi(x,y)+|\varphi(0)|\r),\end{equation}
 where
\begin{equation}
\label{sola10} H_\varphi(x,y):=[\varphi]_{\text{{\em
Lip}}(H)}(1+|x|_H+|y|_H).
\end{equation}
\end{Lemma}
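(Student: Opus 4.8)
The plan is to reduce the claim to a second-moment (variance) estimate and then to exploit the exponential decay of correlations provided by the mixing bound \eqref{sola4}. Write $\bar\varphi:=\int_H\varphi(z)\,\mu^x(dz)$ and
\[
I_T := \frac 1T\int_t^{t+T}\le(\varphi(v^{x,y}(s))-\bar\varphi\r)\,ds.
\]
Since $\E\,|I_T|\leq (\E\,|I_T|^2)^{1/2}$ by the Cauchy--Schwarz inequality, it suffices to bound $\E\,|I_T|^2$. Expanding the square and using the symmetry of the integrand in $r$ and $s$,
\[
\E\,|I_T|^2 = \frac{2}{T^2}\int_t^{t+T}\int_t^{s}\E\,\le[\le(\varphi(v^{x,y}(s))-\bar\varphi\r)\le(\varphi(v^{x,y}(r))-\bar\varphi\r)\r]\,dr\,ds.
\]

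For $r\leq s$ I would condition on $\F_r$ and use that $v^{x,y}$ is a Markov process with transition semigroup $P^x_t$, so that $\E\,[\varphi(v^{x,y}(s))\,|\,\F_r]=(P^x_{s-r}\varphi)(v^{x,y}(r))$. Hence the inner expectation equals
\[
\E\,\le[\le(\varphi(v^{x,y}(r))-\bar\varphi\r)\le((P^x_{s-r}\varphi)(v^{x,y}(r))-\bar\varphi\r)\r],
\]
and the mixing estimate \eqref{sola4}, applied pathwise with frozen slow datum $x$ and initial datum $v^{x,y}(r)$, gives the pointwise bound
\[
\le|(P^x_{s-r}\varphi)(v^{x,y}(r))-\bar\varphi\r|\leq c\,e^{-\d_2(s-r)}\,[\varphi]_{\text{Lip}(H)}\le(1+|x|_H+|v^{x,y}(r)|_H\r).
\]

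The remaining work is to control the prefactors in expectation. Using that $\varphi$ is Lipschitz we have $|\varphi(z)|\leq |\varphi(0)|+[\varphi]_{\text{Lip}(H)}|z|_H$ and, via \eqref{sola6}, $|\bar\varphi|\leq |\varphi(0)|+c\,[\varphi]_{\text{Lip}(H)}(1+|x|_H)$; combining this with the moment bound \eqref{sola5} for $p=2$, the Cauchy--Schwarz inequality applied to the product of $|\varphi(v^{x,y}(r))-\bar\varphi|$ and $(1+|x|_H+|v^{x,y}(r)|_H)$ yields
\[
\le|\E\,\le[\le(\varphi(v^{x,y}(s))-\bar\varphi\r)\le(\varphi(v^{x,y}(r))-\bar\varphi\r)\r]\r|\leq c\,e^{-\d_2(s-r)}\,\le(H_\varphi(x,y)+|\varphi(0)|\r)^2.
\]
Substituting into the double integral and estimating $\int_t^{t+T}\int_t^s e^{-\d_2(s-r)}\,dr\,ds\leq T/\d_2$, I obtain $\E\,|I_T|^2\leq (c/T)(H_\varphi(x,y)+|\varphi(0)|)^2$, and taking square roots gives \eqref{sola9}.

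The main obstacle is the decay-of-correlations step: one must invoke the Markov property to replace the conditional expectation of $\varphi(v^{x,y}(s))$ by $P^x_{s-r}\varphi$ evaluated at $v^{x,y}(r)$, and then apply the exponential mixing bound \eqref{sola4} uniformly in the random initial point. Everything else is bookkeeping with the moment estimates \eqref{sola5}--\eqref{sola6}; the $T^{-1/2}$ rate is precisely the variance gain coming from the integrable exponential decay of the correlation function, which is the infinite-dimensional analogue of the elementary estimate behind Khasminskii's condition \eqref{kha1}.
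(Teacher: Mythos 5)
Your proposal is correct and follows essentially the same route as the paper's own proof: reduce to a second-moment bound, symmetrize the double integral, use the Markov property to write the correlation via $P^x_{s-r}\varphi$, apply the mixing estimate \eqref{sola4} together with the moment bounds \eqref{sola5}--\eqref{sola6}, and integrate the exponential to gain the $T^{-1/2}$ rate. The only difference is cosmetic bookkeeping in how $|\bar\varphi^x|$ is absorbed into $H_\varphi(x,y)+|\varphi(0)|$.
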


\begin{proof}
We have
\[\begin{array}{l}
\ds{\E\,\le(\,\frac
1T\int_t^{t+T}\varphi(v^{x,y}(s))\,ds-\bar{\varphi}^x\r)^2}\\
\vs \ds{=\frac
1{T^2}\,\int_t^{t+T}\int_t^{t+T}\E\,\le(\varphi(v^{x,y}(s))-\bar{\varphi}^x\r)
\le(\varphi(v^{x,y}(r))-\bar{\varphi}^x\r)\,ds\,dr}\\
\vs \ds{=\frac
2{T^2}\int_t^{t+T}\int_r^{t+T}\E\,\le(\varphi(v^{x,y}(s))-\bar{\varphi}^x\r)
\le(\varphi(v^{x,y}(r))-\bar{\varphi}^x\r)\,ds\,dr,}
\end{array}\]
where
\[\bar{\varphi}^x:=\int_H
\varphi(z)\,\mu^x(dz).\]
From the Markovianity of $v^{x,y}(t)$,
for $r\leq s$ we have
\[\E\,\le(\varphi(v^{x,y}(s))-\bar{\varphi}^x\r)
\le(\varphi(v^{x,y}(r))-\bar{\varphi}^x\r)=\E\,\le[\le(\varphi(v^{x,y}(r))-\bar{\varphi}^x\r)
\,P_{s-r}\le(\varphi(v^{x,y}(r))-\bar{\varphi}^x\r)\r],\] so that,
in view of  \eqref{sola5} and \eqref{sola4},
\[\begin{array}{l}
\ds{\E\,\le(\,\frac
1T\int_t^{t+T}\varphi(v^{x,y}(s))\,ds-\bar{\varphi}^x\r)^2}\\
\vs \ds{\leq \frac
c{T^2}\int_t^{t+T}\int_r^{t+T}\le([\varphi]_{\text{
Lip}(H)}\le(\E\,|v^{x,y}(r)|_H^2\r)^{\frac
12}+|\varphi(0)|+|\bar{\varphi}^x|\r)}\\
\vs \ds{ \times
\le(\E\,\le[P_{s-r}\varphi(v^{x,y}(r))-\bar{\varphi}^x\r]^2\r)^{\frac
12}\,ds\,dr} \\
\vs \ds{\leq \frac
c{T^2}\,\le(H_\varphi(x,y)+|\varphi(0)|+|\bar{\varphi}^x|\r)\,H_\varphi(x,y)\int_t^{t+T}\int_r^{t+T}e^{-\d_2(s-r)}\,ds\,dr}\\
\vs \ds{\leq \frac
c{T}\,\le(H_\varphi(x,y)+|\varphi(0)|+|\bar{\varphi}^x|\r)\,H_\varphi(x,y),}
\end{array}\]
with $H_\varphi(x,y)$ defined as in \eqref{sola10}. As from
\eqref{sola6} we have
\[|\bar{\varphi}^x|\leq [\varphi]_{\text{Lip}(H)}
(1+|x|_H)+|\varphi(0)|,\] we can conclude that \eqref{sola9}
holds.

\end{proof}

\subsection{The averaged coefficients}
\label{subsec22}

In the next hypotheses  we introduce the coefficients
of the {\em averaged} equation, and we give conditions which
assure the convergence of the slow motion component $u_\e$ to its
solution. For the reaction coefficient we assume the following
condition.

\begin{Hypothesis}
\label{H4}  There exists a Lipschitz-continuous mapping
$\bar{B}:H\to H$ such that for any $T>0$, $t\geq 0$ and $x,y,h
\in\,H$
\begin{equation}
\label{sola7} \le|\frac
1T\int_t^{t+T}\E\,\le<B_1(x,v^{x,y}(s)),h\r>_H\,ds-\le<\bar{B}(x),h\r>_H\r|\leq
\a(T)\,\le(1+|x|_H+|y|_H\r)\,|h|_H,\end{equation}
 for some function $\a(T)$ such that
\[\lim_{T\to \infty}\a(T)=0.\]

\end{Hypothesis}

Concerning the diffusion coefficient, we assume the following condition.

\begin{Hypothesis}
\label{H5}  There exists a Lipschitz-continuous mapping
$\bar{G}:H\to \mathcal{L}(L^\infty(D);H)$ such that for any $T>0$,
$t\geq 0$, $x,y \in\,H$ and $h,k \in\,L^\infty(D)$
\begin{equation}
\label{sola8} \begin{array}{l} \ds{\le|\frac
1T\int_t^{t+T}\E\,\le<G_1(x,v^{x,y}(s))h,
G_1(x,v^{x,y}(s))k\r>_H\,ds-\le<\bar{G}(x)h,\bar{G}(x)k\r>_H\r|}\\
\vs \ds{\leq \a(T)\,\le(1+|x|^2_H+|y|^2_H\r)\,|h|_\infty
|k|_\infty,}
\end{array}\end{equation}
 for some $\a(T)$ such that
\[\lim_{T\to \infty}\a(T)=0.\]
\end{Hypothesis}

\section{The averaged equation}
\label{sec3}

In this section we describe some relevant situations in
which Hypotheses \ref{H4} and \ref{H5} are verified and we give some notations and some results about the martingale problem and the mild solution for the averaged equation.

\subsection{The reaction coefficient $\bar{B}$}
\label{subsec3.1}

For any fixed $x, h \in\,H$, the mapping
\[y \in\,H\mapsto \le<B_1(x,y),h\r>_H \in\,\mathbb{R},\]
is Lipschitz-continuous. Then, if we define
\[\bar{B}(x):=\int_H B_1(x,z)\,\mu^x(dz),\ \ \ \ x \in\,H,\]
thanks to \eqref{sola4} we have that limit \eqref{sola7} holds, with $\a(T)=c/\sqrt{T}$.

Due to \eqref{sola7}, for any $x_1,x_2,y,h \in\,H$ we have
\[\begin{array}{l}
\ds{\le<\bar{B}_1(x_1)-\bar{B}_1(x_2),h\r>_H
%}\\
%\vs \ds{
=\lim_{T\to \infty}\frac
1T\,\E\int_0^T\le<B_1(x_1,v^{x_1,y}(s))-B_1(x_2,v^{x_2,y}(s)),h\r>_H\,ds.}
\end{array}\]
Then, as the mapping $B_1:H\times H\to H$ is Lipschitz continuous,
we have
\[\begin{array}{l}
\ds{\le|\le<\bar{B}_1(x_1)-\bar{B}_1(x_2),h\r>_H\r|\leq
c\,|h|_H\limsup_{T\to \infty}\frac
1T\int_0^T\le(|x_1-x_2|_H+\E\,|v^{x_1,y}(s)-v^{x_2,y}(s)|_H\r)\,ds}\\
\vs \ds{=c\,|h|_H\le(|x_1-x_2|_H+\limsup_{T\to \infty}\frac
1T\int_0^T \E\,|\rho(s)|_H\,ds\r),}
\end{array}\]
where $\rho(t):=v^{x_1,y}(t)-v^{x_2,y}(t)$, for any $t\geq 0$. In
the next lemma we show that under suitable conditions on the coefficients there exists some constant $c>0$ such
that for any $T>0$
\[\frac
1T\int_0^T \E\,|\rho(s)|_H\,ds\leq c\,|x_1-x_2|_H.\] Clearly, this
implies the Lipschitz continuity of $\bar{B}_1$.

\begin{Lemma}
\label{rho} 
Assume that 
\begin{equation}
\label{condition} \le(\frac
{L_{b_2}}{\la}\r)^2+L_{g_2}^2\,\le(\frac {\beta_2}
e\r)^{\frac{\beta_2(\rho_2-2)}{\rho_2}}
\zeta_2^{\frac{\rho_2-2}{\rho_2}} \kappa_2^\frac 2{\rho_2}
\le(\frac{\rho_2}{\la(\rho_2+2)}\r)^{1-\frac{\beta_2(\rho_2-2)}{\rho_2}}=:M_0<\frac
12.
\end{equation}

Then, under Hypotheses \ref{H1} and \ref{H2},
there exists $c>0$ such that for any $x_1,x_2, y\in\,H$ and $t>0$
\[\frac
1t\int_0^t \E\,|v^{x_1,y}(s)-v^{x_2,y}(s)|_H\,ds\leq
c\,|x_1-x_2|_H.\]
\end{Lemma}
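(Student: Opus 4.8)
The plan is to estimate the difference process $\rho(t) := v^{x_1,y}(t) - v^{x_2,y}(t)$ directly. Since both $v^{x_i,y}$ solve the fast equation \eqref{fast} with the same initial datum $y$ but different frozen slow components $x_1,x_2$, subtracting the two mild formulations gives
\[
\rho(t) = \int_0^t e^{(t-s)A_2}\le[B_2(x_1,v^{x_1,y}(s))-B_2(x_2,v^{x_2,y}(s))\r]ds + \int_0^t e^{(t-s)A_2}\le[G_2(x_1,v^{x_1,y}(s))-G_2(x_2,v^{x_2,y}(s))\r]dw^{Q_2}(s).
\]
The drift term can be split, using Hypothesis \ref{H2}, into a piece controlled by $L_{b_2}|\rho(s)|_H$ and a piece controlled by $|x_1-x_2|_H$; the contraction constant $L_{b_2}<\la$ from \eqref{uffa1} together with the spectral gap $\la$ from \eqref{sola2} is what makes the drift dissipative. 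Similarly the diffusion term splits into a part with Lipschitz constant $L_{g_2}$ in the $v$-variable and a part proportional to $|x_1-x_2|_H$.

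First I would apply the factorization/stochastic-convolution estimates (exactly the machinery behind \eqref{sola3} and \eqref{sola5}, as in \cite{cerrai2}) to control the stochastic integral. The constant $M_0$ in \eqref{condition} is precisely the combined contraction constant one obtains: the $(L_{b_2}/\la)^2$ term comes from the deterministic drift using $\sup_{t}\|e^{tA_2}\|\le e^{-\la t}$, while the second summand, built from $L_{g_2}$, $\zeta_2$, $\kappa_2$, $\rho_2$, $\beta_2$, is the sharp bound on the Hilbert--Schmidt norm of the stochastic convolution obtained by optimizing the standard factorization estimate (the factor $(\beta_2/e)^{\beta_2(\rho_2-2)/\rho_2}$ and the power of $\rho_2/(\la(\rho_2+2))$ are the optimized constants from that computation, which I would quote rather than rederive). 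The structural point is that, after these estimates, one arrives at an inequality of the schematic form $\E|\rho(t)|_H^2 \le M_0\,\sup_{s\le t}\E|\rho(s)|_H^2 + C|x_1-x_2|_H^2$, or more precisely a bound where the Gronwall-type feedback constant is exactly $M_0$.

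The key step is then to convert the pointwise-in-time bound into the time-averaged bound claimed. Because $M_0<1/2$, the self-referential inequality closes: I would take the supremum over $t$ (or integrate the resulting renewal inequality) to get $\sup_{t\ge 0}\E|\rho(t)|_H \le c\,|x_1-x_2|_H$ with a constant independent of $y$ and of $t$. This uniform-in-time bound immediately yields
\[
\frac 1t\int_0^t \E\,|\rho(s)|_H\,ds \le \sup_{s\ge 0}\E\,|\rho(s)|_H \le c\,|x_1-x_2|_H,
\]
which is the assertion. The reason \eqref{condition} is stated with the threshold $1/2$ rather than $1$ is presumably to absorb the cross terms from squaring and from the interplay of drift and diffusion contributions without tracking them explicitly.

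The main obstacle is the stochastic-convolution estimate in the colored-noise setting: one must control $\int_0^t e^{(t-s)A_2}[G_2(x_1,v^{x_1,y}(s))-G_2(x_2,v^{x_2,y}(s))]\,dw^{Q_2}(s)$ in $H$, and because the noise is only colored in space (with $\rho_2$ possibly infinite in dimension one), this requires the factorization method with the precise summability conditions \eqref{esistenza}--\eqref{sola1} rather than a naive It\^o isometry. Getting the sharp constant $M_0$ with the exact exponents in \eqref{condition}, and verifying that the $v$-dependence of $G_2$ enters only through its Lipschitz constant $L_{g_2}$ (so that the growth condition \eqref{growthg2} is not needed here), is the delicate part; I expect the proof to lean heavily on the corresponding computation in \cite[Theorem 7.4]{cerrai2} and merely adapt it to track the extra $|x_1-x_2|_H$ source term.
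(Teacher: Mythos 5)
Your proposal is correct and follows essentially the same route as the paper: the paper likewise works with $\rho(t)=v^{x_1,y}(t)-v^{x_2,y}(t)$, splits off the stochastic convolution $\Gamma(t)$ (estimating $\La(t)=\rho(t)-\Gamma(t)$ by a pathwise differential inequality driven by the dissipativity $\la$ and the constant $L_{b_2}<\la$), bounds $\E\,|\Gamma(t)|_H^2$ via the It\^o isometry combined with the Hilbert--Schmidt estimate \eqref{sola20} derived from \eqref{esistenza}--\eqref{sola1}, and ends with a renewal inequality whose feedback kernels have exactly the $L^1(0,\infty)$ norms encoded in $M_\eta$ of \eqref{sola21}. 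The one point where you diverge is the closing step: the paper integrates the renewal inequality in $t$ and applies Young's convolution inequality, obtaining $\int_0^t \E\,|\rho(s)|_H^2\,ds\leq c\,(1-2M_{\bar\eta})^{-1}\,t\,|x_1-x_2|_H^2$ and then the claim by Cauchy--Schwarz; you instead take a supremum in $t$. Your variant is also valid (since the kernels are nonnegative, the same $L^1$ norms govern both routes, so the threshold is identical) and in fact yields the stronger uniform-in-time bound $\sup_{t\geq 0}\E\,|\rho(t)|_H\leq c\,|x_1-x_2|_H$; it does, however, require noting the a priori finiteness of $\sup_{t\leq T}\E\,|\rho(t)|_H^2$ (available from the standard well-posedness estimates, e.g. \eqref{sola5}) before the self-referential absorption is legitimate. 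Two small bookkeeping corrections: the feedback constant is not ``exactly $M_0$'' but $2M_\eta$, the factor $2$ arising from $|\rho|_H^2\leq 2|\La|_H^2+2|\Gamma|_H^2$, which is precisely why \eqref{condition} demands $M_0<1/2$ rather than $M_0<1$ (so your guess about the threshold is right); and no factorization argument is needed here --- a fixed-time second moment suffices, so the plain It\^o isometry plus \eqref{sola20} is what the paper uses, factorization appearing only later in Lemma \ref{lem1}. You are also correct that only the Lipschitz constant $L_{g_2}$, and not the growth condition \eqref{growthg2}, enters this lemma.
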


\begin{proof}
We set $\rho(t):=v^{x_1,y}(t)-v^{x_2,y}(t)$ and we define
\[\Gamma(t):=\int_0^t
e^{(t-s)A_2}\le[G_2(x_1,v^{x_1,y}(s))-G_2(x_2,v^{x_2,y}(s))\r]\,dw^{Q_2}(s),\]
and set $\La(t):=\rho(t)-\Gamma(t)$. For any $\eta \in\,(0,\la/2)$
we can fix $c_{1,\eta}>0$ such that
\[\begin{array}{l}
\ds{\frac 12 \frac d{dt}\,|\La(t)|_H^2\leq
-\la\,|\La(t)|_H^2+\le(c\,|x_1-x_2|_H+L_{b_2}\,|\rho(t)|_H\r)\,|\La(t)|_H}\\
\vs \ds{\leq -\le(\frac{\la}2-\eta\r)\,|\La(t)|_H^2+\frac
{L_{b_2}^2}{2\la}\,|\rho(t)|_H^2+c_{1,\eta}\,|x_1-x_2|^2.}
\end{array}\] This implies
\[|\La(t)|_H^2\leq
\le(1+\frac{2 c_{1,\eta}}{\la-2\eta}\r)\,|x_1-x_2|^2+\frac
{L_{b_2}^2}{\la}\int_0^t e^{-(\la-2\eta)(t-s)}|\rho(s)|_H^2\,ds,\]
so that
\begin{equation}
\label{sola15} \begin{array}{l} \ds{\E\,|\rho(t)|_H^2\leq
2\le(1+\frac{2 c_{1,\eta}}{\la-2\eta}\r)\,|x_1-x_2|^2
%}\\
%\vs \ds{
+ \frac{2L_{b_2}^2}{\la}\int_0^t
e^{-(\la-2\eta)(t-s)}\E\,|\rho(s)|_H^2\,ds+2\,\E\,|\Gamma(t)|_H^2.}
\end{array}\end{equation}

Thanks to Hypothesis \ref{H1}, for any $J
\in\,\mathcal{L}(L^\infty(D),H)\cap \mathcal{L}(H,L^1(D))$, with
$J=J^\star$ and for any $s\geq 0$ we have
\[\begin{array}{l}
\ds{\|e^{sA_2} J Q_2\|_{2}^2=\sum_{k=1}^\infty \la_{2,k}^2
\le|e^{sA_2}
J e_{2,k}\r|_H^2}\\
\vs \ds{\leq \le(\,\sum_{k=1}^\infty
\la_{2,k}^{\rho_2}\,|e_{2,k}|_0^2\r)^{\frac
2{\rho_2}}\le(\,\sum_{k=1}^\infty \le|e^{sA_2} J
e_{2,k}\r|_H^{\frac{2\rho_2}{\rho_2-2}}\,|e_{2,k}|_H^{-\frac
4{\rho_2-2}}\r)^{\frac{\rho_2-2}{\rho_2}}}\\
\vs \ds{\leq \kappa_2^{\frac 2{\rho_2}}\sup_{k
\in\,\nat}\,\le|e^{sA_2} J e_{2,k}\r|_H^{\frac
4{\rho_2}}|e_{2,k}|_0^{-\frac 4{\rho_2}}\le(\,\sum_{k=1}^\infty
\le|e^{sA_2} J e_{2,k}\r|_H^2\r)^{\frac{\rho_2-2}{\rho_2}}.}
\end{array}
\]
Hence, thanks to \eqref{sola2}, we obtain
\begin{equation}
\label{sola18}
\begin{array}{l} \ds{\|e^{sA_2} J Q_2\|_{2}^2\leq
\kappa_2^{\frac
2{\rho_2}}\,\|J\|_{\mathcal{L}(L^\infty(D),H)}^{\frac
4{\rho_2}}e^{-\frac{4\la}{\rho_2} s}\le(\,\sum_{k=1}^\infty
\le|e^{sA_2} J e_{2,k}\r|_H^2\r)^{\frac{\rho_2-2}{\rho_2}}.}
\end{array}
\end{equation}
We have
\[\begin{array}{l}
\ds{\sum_{k=1}^\infty \le|e^{sA_2} J
e_{2,k}\r|_H^2=\sum_{k=1}^\infty\,\sum_{h=1}^\infty\le|\le<e^{sA_2}
J e_{2,k},e_{2,h}\r>_H\r|^2=\sum_{h=1}^\infty\,\sum_{k=1}^\infty
\le|\le< e_{2,k},J e^{sA_2}
e_{2,h}\r>_H\r|^2}\\
\vs \ds{=\sum_{h=1}^\infty\,|J e_{2,h}|_H^2 e^{-2\a_{2,h} s}\leq
e^{-\la s}\|J\|_{\mathcal{L}(L^\infty(D),H)}^2\sum_{h=1}^\infty
|e_{2,h}|_0^2 e^{-\a_{2,h} s}.}
\end{array}\]
Then, as for any $\beta>0$
\begin{equation}
\label{beta} e^{-\a t}\leq \le(\frac \beta
e\r)^{\beta}\,t^{-\beta} \a^{-\beta},\ \ \ \ \ \a, t>0,
\end{equation}
if we take $\beta_2$ as in condition \eqref{esistenza}, we get
\[\begin{array}{l}
\ds{\sum_{k=1}^\infty \le|e^{sA_2} J e_{2,k}\r|_H^2\leq \le(\frac
{\beta_2} e\r)^{\beta_2}s^{-\beta_2}\,e^{-\la
s}\|J\|_{\mathcal{L}(L^\infty(D),H)}^2\sum_{h=1}^\infty
|e_{2,h}|_0^2\a_h^{-\beta_2}}\\
\vs \ds{\leq \le(\frac {\beta_2} e\r)^{\beta_2}\zeta_2\,
s^{-\beta_2}\,e^{-\la s}\|J\|_{\mathcal{L}(L^\infty(D),H)}^2,}
\end{array}\] and  from \eqref{sola18} we can conclude
\begin{equation}
\label{sola20} \|e^{sA_2} J Q_2\|_{2}^2\leq \le(\frac {\beta_2}
e\r)^{\frac{\beta_2(\rho_2-2)}{\rho_2}}
\zeta_2^{\frac{\rho_2-2}{\rho_2}} \kappa_2^\frac
2{\rho_2}\,s^{-\frac{\beta_2(\rho_2-2)}{\rho_2}}
e^{-\frac{\la(\rho_2+2)}{\rho_2}
s}\|J\|_{\mathcal{L}(L^\infty(D),H)}^2. \end{equation}
 This means
that if we set \[K_2:=\le(\frac {\beta_2}
e\r)^{\frac{\beta_2(\rho_2-2)}{\rho_2}}
\zeta_2^{\frac{\rho_2-2}{\rho_2}} \kappa_2^\frac 2{\rho_2},\] and
if we take
\[J:=G_2(x_1,v^{x_1,y}(s))-G_2(x_2,v^{x_2,y}(s)),\]
for
any $\eta>0$ we can find some $c_{2,\eta}>0$ such that
\[\begin{array}{l}
\ds{\E\,|\Gamma(t)|_H^2=\int_0^t\E\,\|e^{(t-s)A_2}\le[G_2(x_1,v^{x_1,y}(s))-G_2(x_2,v^{x_2,y}(s))Q_2\r]\|_{2}^2\,ds}\\
\vs \ds{\leq K_2\int_0^t
(t-s)^{-\frac{\beta_2(\rho_2-2)}{\rho_2}}e^{-\frac{\la(\rho_2+2)}{\rho_2}
(t-s)}\E\,\le(c\,|x_1-x_2|_H+L_{g_2}\,|\rho(s)|_H\r)^2\,ds}\\
\vs \ds{\leq
c_{2,\eta}\,|x_1-x_2|^2_H+(1+\eta)L_{g_2}^2\,K_2\int_0^t
(t-s)^{-\frac{\beta_2(\rho_2-2)}{\rho_2}}e^{-\frac{\la(\rho_2+2)}{\rho_2}
(t-s)}\E\,|\rho(s)|_H^2\,ds,}
\end{array}\]
last inequality following from the fact that, according to
\eqref{sola1}, $\beta_2(\rho_2-2)/\rho_2<1$.

Now, if we plug the inequality above into \eqref{sola15}, for any
$\eta\leq \eta_1$ we obtain
\[\begin{array}{l}
\ds{\E\,|\rho(t)|_H^2\leq 2\le(1+\frac{2
\,c_{1,\eta}}{\la-2\eta}\r)\,|x_1-x_2|^2+
\frac{2L_{b_2}^2}{\la}\int_0^t
e^{-(\la-2\eta)(t-s)}\E\,|\rho(s)|_H^2\,ds}\\
\vs
\ds{+2\,c_{2,\eta}\,|x_1-x_2|^2_H+2\,(1+\eta)L_{g_2}^2\,K_2\int_0^t
(t-s)^{-\frac{\beta_2(\rho_2-2)}{\rho_2}}e^{-\frac{\la(\rho_2+2)}{\rho_2}
(t-s)}\E\,|\rho(s)|_H^2\,ds,}
\end{array}\]
and hence, if we integrate with respect to $t$ both sides, from
the Young inequality we get
\[\begin{array}{l}
\ds{\int_0^t \E\,|\rho(s)|_H^2\,ds\leq 2\le(1+\frac{2
\,c_{1,\eta}}{\la-2\eta}+c_{2,\eta}\r)t\,|x_1-x_2|_H^2}\\
\vs \ds{+2\le[ \frac{L_{b_2}^2}{\la}\int_0^t
e^{-(\la-2\eta)s}\,ds+(1+\eta)L_{g_2}^2\,K_2\int_0^t
s^{-\frac{\beta_2(\rho_2-2)}{\rho_2}}e^{-\frac{\la(\rho_2+2)}{\rho_2}
s}\,ds\r]\!\int_0^t\!\!\E\,|\rho(s)|_H^2 ds}\\
\vs \ds{\leq c_{3,\eta}
t\,|x_1-x_1|_H^2+2M_\eta\int_0^t\E\,|\rho(s)|_H^2 ds,}
\end{array}\] where
\begin{equation}
\label{sola21} M_\eta:=\frac
{L_{b_2}^2}{\la(\la-2\eta)}+(1+\eta)L_{g_2}^2\,\le(\frac {\beta_2}
e\r)^{\frac{\beta_2(\rho_2-2)}{\rho_2}}
\zeta_2^{\frac{\rho_2-2}{\rho_2}} \kappa_2^\frac 2{\rho_2}
\le(\frac{\rho_2}{\la(\rho_2+2)}\r)^{1-\frac{\beta_2(\rho_2-2)}{\rho_2}}.
\end{equation}
Now, as in \eqref{condition} we have assumed that $M_0<1/2$, we
can fix $\bar{\eta} \in\,(0,\la/2)$ such that $2\,M_{\bar{\eta}}<1$,
and hence
\[\int_0^t \E\,|\rho(s)|_H^2\,ds\leq
\frac{c_{3,\bar{\eta}}}{1-2 M_{\bar{\eta}}}\, t\,|x_1-x_1|_H^2.\]
This implies
\[\frac 1{t}\int_0^t \E\,|\rho(s)|_H\,ds\leq
\le(\frac{c_{3,\bar{\eta}}}{1-2M_{\bar{\eta}}}\r)^{\frac 12}\,
|x_1-x_1|_H,\] and  the proof of the lemma is finished.
\end{proof}

\subsection{The diffusion coefficient $\bar{G}$}
\label{subsec3.2}

If we assume that the function $g_1:D\times \mathbb{R}^2\to
\mathbb{R}$ is uniformly bounded, the mapping $G_1$ is well
defined from $H$ into $\mathcal{L}(H)$. Moreover, for any fixed
$x, h, k \in\,H$ the mapping \[z \in\,H\mapsto \le<G_1(x,z)
h,G_1(x,z) k\r>_H \in\,\reals,\] is Lipschitz continuous. Thus,
under the assumptions described above,
 if we take
\begin{equation}
\label{limdef} \le<S(x)h,k\r>_H=\int_H\le<G_1(x,z)h, G_1(x,z)
k\r>_H\,\mu^x(dz),
\end{equation}
we have that  $S:H\to \mathcal{L}(H)$ and, due to \eqref{sola4},  for any $T>0$, $t\geq 0$
and $x,y,h,k \in\,H$
\begin{equation} \label{defy}
\begin{array}{l}
\ds{\le|\frac 1T\int_t^{t+T}\E\,\le<G_1(x,v^{x,y}(s))h,
G_1(x,v^{x,y}(s))k\r>_H\,ds-\le<S(x)h,k\r>_H\r|}\\
\vs \ds{\leq \a(T)\,\le(1+|x|^2_H+|y|^2_H\r)\,|h|_H\,|k|_H,}
\end{array}
\end{equation}
 for some function  $\a(T)$ going to zero as $T\uparrow \infty$.

  It is immediate to check that  $S(x)=S(x)^\star$ and
$S(x)\geq 0$, for any $x \in\,H$. Then, as well known, there
exists an operator $\bar{G}(x) \in\,\mathcal{L}(H)$ such that
$\bar{G}(x)^2=S(x)$. If we assume that there exists $\d>0$ such
that
\[\inf_{\substack{\xi \in\,D\\\si
\in\,\mathbb{R}^2}}g_1(\xi,\si)\geq \d,\] we have that $S(x)\geq
\d^2$, and hence $\bar{G}(x)\geq \d$. In particular,
$\bar{G}(x)$ is invertible and
\[\|\bar{G}(x)^{-1}\|_{\mathcal{L}(H)}\leq \frac{1}{\d}.\]
Next, we notice that for any $x_1, x_2 \in\,H$
\begin{equation}
\label{comm} S(x_1) S(x_2)=S(x_2) S(x_1).\end{equation} Actually,
according to \eqref{limdef} for any $h,k \in\,H$
\[\begin{array}{l}
\ds{\le<S(x_1) S(x_2) h,k\r>_H=\int_H\le<G_1(x_1,z)S(x_2)h,
G_1(x_1,z) k\r>_H\,\mu^{x_1}(dz)}\\
\vs \ds{=\int_H \int_H \le<G_1^2(x_2,w)h, G_1^2(x_1,z)
k\r>_H\,\mu^{x_2}(dw)\,\mu^{x_1}(dz)}\\
\vs \ds{=\int_H \int_H \le<G_1^2(x_1,z)h, G_1^2(x_2,w)
k\r>_H\,\mu^{x_1}(dz)\,\mu^{x_2}(dw)=\le<S(x_2) S(x_1) h,k\r>_H.}
\end{array}\]
In particular, from \eqref{comm} for any $x_1, x_2 \in\,H$ we have
\begin{equation}
\label{commg} \bar{G}(x_1)\bar{G}(x_2)=\bar{G}(x_2) \bar{G}(x_1).
\end{equation}

Now, as $g_1(\xi,\cdot):\mathbb{R}^2\to \mathbb{R}^2$ is bounded
and  Lipschitz-continuous, uniformly with respect to $\xi \in\,D$,
we have that $g^2_1(\xi,\cdot):\mathbb{R}^2\to \mathbb{R}^2$ is
Lipschitz-continuous as well, uniformly with respect to $\xi
\in\,D$. This implies that for any $x_1, x_2, y \in\,H$, $h
\in\,L^\infty(D)$ and $k \in\,H$
\[\begin{array}{l}
\ds{\le|\le<\le[G^2_1(x_1,v^{x_1,y}(s))-G^2_1(x_2,v^{x_2,y}(s))\r]h,k\r>_H\r|}\\
\vs \ds{\leq
c\le(|x_1-x_2|_H+|v^{x_1,y}(s)-v^{x_2,y}(s)|_H\r)|h|_\infty
|k|_H,} \end{array}\] so that, according to \eqref{defy}
\[\begin{array}{l}
\ds{\le|\le<(S(x_1)-S(x_2))h,k\r>_H\r|}\\
\vs \ds{\leq c\,|h|_\infty |k|_H \limsup_{T\to \infty} \frac
1T\int_0^T
\le(|x_1-x_2|_H+\E\,|v^{x_1,y}(s)-v^{x_2,y}(s)|_H\r)\,ds}\\
\vs \ds{=c\,|h|_\infty |k|_H\le(|x_1-x_2|_H+ \limsup_{T\to \infty}
\frac 1T\int_0^T\E\,|v^{x_1,y}(s)-v^{x_2,y}(s)|_H\,ds\r).}
\end{array}\]
Then, thanks to Lemma \ref{rho}  we can conclude that $S:H\to
\mathcal{L}(L^\infty(D),H)$ is Lipschitz-continuous.

This implies that $\bar{G}:H\to \mathcal{L}(L^\infty(D),H)$ is
Lipschitz-continuous as well. Actually, thanks to \eqref{commg}
and to the fact that $\bar{G}(x_1)+\bar{G}(x_2)$ is invertible,
 for any $h \in\,L^\infty(D)$ and $k \in\,H$
\begin{equation}
\label{stima17}
\le<\le[\bar{G}(x_1)-\bar{G}(x_2)\r]h,k\r>_H=\le<\le[S(x_1)-S(x_2)\r]h,\le[\bar{G}(x_1)+\bar{G}(x_2)\r]^{-1}k\r>_H.
\end{equation}
Then, as
\begin{equation}
\label{stima18}
\|\le[\bar{G}(x_1)+\bar{G}(x_2)\r]^{-1}\|_{\mathcal{L}(H)}\leq \frac 1{2\d},
\end{equation}
we obtain
\[\le|\le<\le[\bar{G}(x_1)-\bar{G}(x_2)\r]h,k\r>_H\r|\leq
c\,|x_1-x_2|_H\,|h|_\infty \frac 1{2\d}\,|k|_H,\] and this
implies the Lipschitz-continuity of $\bar{G}:H\to
\mathcal{L}(L^\infty(D),H)$.

\bigskip

We conclude by showing that the operator $\bar{G}$ introduced in
Hypothesis \ref{H5} satisfies a suitable Hilbert-Schmidt property
which assures the well-posedness of the stochastic convolution
\[\int_0^t e^{(t-s)A_1}\bar{G}(u(s))dw^{Q_1}(s),\ \ \ \ \
t\geq 0,\] in $L^p(\Omega;C([0,T];H))$, for any $p\geq 1$ and
$T>0$ and for any process $u \in\,C([0,T];L^p(\Omega;H))$.

\begin{Lemma}
\label{hs} Assume Hypotheses \ref{H1}, \ref{H2} and \ref{H5}.
Then, for any $t>0$ and $x_1, x_2 \in\,H$ we have
\[\|e^{t A_1}\le[\bar{G}(x_1)-\bar{G}(x_2)\r]Q_1\|_2\leq
c(t)\,|x_1-x_2|_H\,t^{-\frac{\beta_1(\rho_1-2)}{2\rho_1}},\] for
some continuous increasing function $c(t)$.
\end{Lemma}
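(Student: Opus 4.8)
The plan is to reduce the estimate to a Hilbert--Schmidt computation entirely parallel to the one carried out in the proof of Lemma \ref{rho}, the passage producing \eqref{sola20}, but now performed for the index $1$ quantities $A_1$, $Q_1$, $\kappa_1$, $\zeta_1$, $\rho_1$, $\beta_1$. The operator to be plugged in is $J:=\bar{G}(x_1)-\bar{G}(x_2)$. Two features of $J$ make the scheme applicable: first, since each $\bar{G}(x)$ is the nonnegative square root of the self-adjoint operator $S(x)$, it is self-adjoint on $H$, and hence so is $J$; second, by the Lipschitz continuity of $\bar{G}:H\to\mathcal{L}(L^\infty(D),H)$ established in Subsection \ref{subsec3.2}, one has $\|J\|_{\mathcal{L}(L^\infty(D),H)}\leq c\,|x_1-x_2|_H$. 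These are precisely the two structural properties ($J=J^\star$ and control of the $\mathcal{L}(L^\infty(D),H)$-norm) on which the derivation of \eqref{sola20} rests.

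First I would write $\|e^{sA_1}JQ_1\|_2^2=\sum_k \lambda_{1,k}^2\,|e^{sA_1}Je_{1,k}|_H^2$ and apply H\"older with exponents $\rho_1/2$ and $\rho_1/(\rho_1-2)$, using the first series in \eqref{esistenza} to pull out $\kappa_1^{2/\rho_1}$, exactly as in the step preceding \eqref{sola18}. This reduces matters to estimating $\sum_k|e^{sA_1}Je_{1,k}|_H^2$, which by expanding in the basis $\{e_{1,h}\}$ and using $J=J^\star$ together with $A_1 e_{1,h}=-\alpha_{1,h}e_{1,h}$ equals $\sum_h e^{-2\alpha_{1,h}s}\,|Je_{1,h}|_H^2$. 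The supremum factor coming out of the H\"older step is controlled by $\|J\|_{\mathcal{L}(L^\infty(D),H)}$, again as in the index $2$ case, since $|e^{sA_1}Je_{1,k}|_H\leq\|e^{sA_1}\|_{\mathcal{L}(H)}\,|Je_{1,k}|_H\leq\|J\|_{\mathcal{L}(L^\infty(D),H)}\,|e_{1,k}|_0$.

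The one place where index $1$ genuinely differs from index $2$ is the treatment of the remaining sum. For index $2$ the bound $\inf_k\alpha_{2,k}=\lambda>0$ of \eqref{sola2} was used to extract an exponential decay; here no such spectral gap is available, and I would simply use $\|e^{sA_1}\|_{\mathcal{L}(H)}\leq 1$, valid since the $\alpha_{1,k}$ are nonnegative. The crucial observation is that the finiteness of $\zeta_1=\sum_k\alpha_{1,k}^{-\beta_1}|e_{1,k}|_0^2$ in \eqref{esistenza} already forces $\alpha_{1,k}>0$ for every $k$, so the elementary inequality \eqref{beta} applies termwise with $\beta=\beta_1$ and gives $\sum_h e^{-2\alpha_{1,h}s}|e_{1,h}|_0^2\leq(\beta_1/e)^{\beta_1}(2s)^{-\beta_1}\zeta_1$. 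Combining this with the H\"older factor yields a power $s^{-\beta_1(\rho_1-2)/\rho_1}$, whose square root is the claimed $t^{-\beta_1(\rho_1-2)/(2\rho_1)}$; condition \eqref{sola1} guarantees this exponent is strictly less than $1$, so the resulting singularity is integrable where the estimate will later be used.

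Assembling the pieces gives $\|e^{sA_1}JQ_1\|_2\leq c\,\|J\|_{\mathcal{L}(L^\infty(D),H)}\,s^{-\beta_1(\rho_1-2)/(2\rho_1)}$ with a constant $c$ depending only on $\kappa_1,\zeta_1,\rho_1,\beta_1$, and substituting the Lipschitz bound for $\|J\|_{\mathcal{L}(L^\infty(D),H)}$ produces the assertion; in fact $c(t)$ may be taken constant, which is trivially continuous and increasing. I do not anticipate any real obstacle: the only points requiring care are the bookkeeping of the H\"older exponents and the verification that dropping the spectral gap, i.e.\ replacing the index $2$ exponential by $1$, does not spoil the extraction of the $s$-singularity, which it does not, since the decay of the semigroup was never what made $\zeta_1$ converge.
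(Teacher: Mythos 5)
Your proof is correct and follows essentially the same route as the paper's: the paper likewise runs the H\"older splitting with $\kappa_1^{2/\rho_1}$ from \eqref{esistenza}, controls the supremum factor through the Lipschitz continuity of $\bar{G}:H\to \mathcal{L}(L^\infty(D),H)$, performs the double-sum swap $\sum_k \le|e^{tA_1}Je_{1,k}\r|_H^2=\sum_h e^{-2t\a_{1,h}}\le|Je_{1,h}\r|_H^2$, and then applies \eqref{beta} with $\zeta_1$ to extract $t^{-\beta_1}$, exactly as in your outline. Your only departures are improvements in rigor rather than substance: you justify explicitly the self-adjointness of $J=\bar{G}(x_1)-\bar{G}(x_2)$ (needed for the double-sum swap, which the paper uses tacitly) and the strict positivity of the $\a_{1,k}$ forced by $\zeta_1<\infty$, and you correctly observe that $c(t)$ may in fact be taken constant since $\|e^{tA_1}\|_{\mathcal{L}(H)}\leq 1$.
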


\begin{proof}
According to Hypothesis \ref{H1}, we have
\[\begin{array}{l}
\ds{\|e^{t
A_1}\le[\bar{G}(x_1)-\bar{G}(x_2)\r]Q_1\|_2^2=\sum_{k=1}^\infty
\le|e^{t
A_1}\le[\bar{G}(x_1)-\bar{G}(x_2)\r]Q_1 e_{1,k}\r|_H^2}\\
\vs \ds{\leq
\le(\sum_{k=1}^\infty\la_{1,k}^{\rho_1}\,|e_{1,k}|_\infty^2\r)^{\frac
2{\rho_1}}\le(\sum_{k=1}^\infty |e_{1,k}|_\infty^{-\frac
4{\rho_1-2}}\,\le|e^{t A_1}\le[\bar{G}(x_1)-\bar{G}(x_2)\r]
e_{1,k}\r|_H^{\frac{2\rho_1}{\rho_1-2}}\r)^{\frac {\rho_1-2}{\rho_1}}}\\
\vs \ds{\leq c\,\sup_{k \in\,\nat} |e_{1,k}|_\infty^{-\frac
4{\rho_1}}\le|e^{t A_1}\le[\bar{G}(x_1)-\bar{G}(x_2)\r]
e_{1,k}\r|_H^{\frac 4{\rho_1}}\le(\sum_{k=1}^\infty \le|e^{t
A_1}\le[\bar{G}(x_1)-\bar{G}(x_2)\r] e_{1,k}\r|_H^{2}\r)^{\frac
{\rho_1-2}{\rho_1}},}
\end{array}\]
and then, as $\bar{G}:H\to \mathcal{L}(L^\infty(D),H)$ is
Lipschitz-continuous, we conclude
\begin{equation}
\label{sola200} \|e^{t
A_1}\le[\bar{G}(x_1)-\bar{G}(x_2)\r]Q_1\|_2^2\leq
c(t)\,|x_1-x_2|_H^{\frac 4{\rho_1}}\le(\sum_{k=1}^\infty \le|e^{t
A_1}\le[\bar{G}(x_1)-\bar{G}(x_2)\r] e_{1,k}\r|_H^{2}\r)^{\frac
{\rho_1-2}{\rho_1}},\end{equation}
 for some continuous increasing
function $c(t)$.

Now, by using again the Lipschitz-continuity of $\bar{G}:H\to
\mathcal{L}(L^\infty(D),H)$, we have
\[\begin{array}{l}
\ds{\sum_{k=1}^\infty \le|e^{t
A_1}\le[\bar{G}(x_1)-\bar{G}(x_2)\r]
e_{1,k}\r|_H^{2}=\sum_{k=1}^\infty \sum_{h=1}^\infty \le|\le<e^{t
A_1}\le[\bar{G}(x_1)-\bar{G}(x_2)\r]
e_{1,k},e_{1,h}\r>_H\r|^{2}}\\
\vs \ds{=\sum_{h=1}^\infty \le|\le[\bar{G}(x_1)-\bar{G}(x_2)\r]
e_{1,h}\r|_H^{2} e^{-2t \a_{1,h}}\leq c\,|x_1-x_2|_H^2
\sum_{h=1}^\infty e^{-2t \a_{1,h}} |e_{1,h}|_H^{2}}
\end{array}\]
and then, if we take $\beta_1$ as in Hypothesis \ref{H1}, we
obtain
\[\sum_{k=1}^\infty \le|e^{t
A_1}\le[\bar{G}(x_1)-\bar{G}(x_2)\r] e_{1,k}\r|_H^{2}\leq
c\,|x_1-x_2|_H^2 t^{-\beta_1}\sum_{h=1}^\infty \a_{1,h}^{-\beta_1}
|e_{1,h}|_H^{2}\leq c\,|x_1-x_2|_H^2 t^{-\beta_1}.\] Thanks to
\eqref{sola200}, this implies our thesis.

\end{proof}

\subsection{Martingale problem and mild solution of the averaged equation} \label{4.2}

Since both the mapping $\bar{B}:H\to H$ and the mapping
$\bar{G}:H\to \mathcal{L}(L^\infty(D);H)$ are Lipschitz-continuous
and  Lemma \ref{hs} holds,   for any initial datum $x \in\,H$ the
averaged equation
\begin{equation}
\label{averaged} du(t)=\le[A_1
u(t)+\bar{B}(u(t))\r]\,dt+\bar{G}(u(t))\,dw^{Q_1}(t),\ \ \ \
u(0)=x,
\end{equation}
admits a unique mild solution $\bar{u}$ in
$L^p(\Omega,C([0,T];H))$, for any $p\geq 1$ and $T>0$  (for a proof see for example
\cite[Section 3]{cerrai1}). This means
that there exists a unique adapted process $\bar{u}
\in\,L^p(\Omega,C([0,T];H))$ such that for any $t\leq T$
\[\bar{u}(t)=e^{t
A_1}x+\int_0^t e^{(t-s)A_1}\bar{B}(\bar{u}(s))\,ds+\int_0^t
e^{(t-s)A_1}\bar{G}(\bar{u}(s))\,dw^{Q_1}(s),\] or, equivalently,
\[\le<\bar{u}(t),h\r>_H=\le<x,h\r>_H+\int_0^t
\le[\le<\bar{u}(s),A_1
h\r>_H+\le<\bar{B}(\bar{u}(s)),h\r>_H\r]\,ds+\int_0^t
\le<\bar{G}(\bar{u}(s))dw^{Q_1}(s),h\r>_H,\] for any $h
\in\,D(A_1)$.

Now, we recall the notion of martingale problem with parameters
$(x,A_1,\bar{B},\bar{G},Q_1)$. For any fixed $x \in\,H$, we denote
by $C_x([0,T];H)$ the space of continuous functions
$\omega:[0,T]\to H$ such that $\omega(0)=x$ and we denote by
$\eta(t)$ the canonical process on $C_x([0,T];H)$, which is
defined by
\[\eta(t)(\omega)=\omega(t),\ \ \ \ \ t \in\,[0,T].\]
Moreover, we denote by $\mathcal{E}_t$ the canonical filtration
$\sigma(\eta(s),\ s\leq t)$, for $t \in\,[0,T]$,  and by
$\mathcal{E}$ the canonical $\si$-algebra $\sigma(\eta(s),\ s\leq
T)$.

\begin{Definition}
A function $\varphi:H\to \reals$ is a {\em regular cylindrical
function} associated with the operator $A_1$ if there exist $k
\in\, \nat$, $f \in\,C_c^\infty(\mathbb{R}^k)$,
 $a_1,\ldots,a_k \in\,H$ and $N \in\,\nat$ such that
\[\varphi(x)=f(\le<x,P_N a_1\r>_H,\ldots,\le<x,P_N a_k\r>_H),\ \ \ \ x
\in\,H,\] where $P_N$ is the projection of $H$ onto $\text{{\em
span}}\le<e_{1,1},\ldots,e_{1,N}\r>$ and $\{e_{1,n}\}_{n
\in\,\nat}$ is the orthonormal basis diagonalizing $A_1$ and 
introduced in Hypothesis \ref{H1}.
\end{Definition}

In what follows we shall denote the set of all regular cylindrical
functions  by $\mathcal{R}(H)$. For any $\varphi
\in\,\mathcal{R}(H)$ and $x \in\,H$ we define
\begin{equation}
\label{av}
\begin{array}{l}
\ds{\mathcal{L}_{av}\,\varphi(x):=\frac 12
\text{Tr}\,\le[(\bar{G}(x)Q_1)^{\star}D^2\varphi(x)\bar{G}(x)Q_1\r]+\le<A_1
D\varphi(x),x\r>_H+\le<D\varphi(x),\bar{B}(x)\r>_H}\\
\vs \ds{=\frac 12\sum_{i,j=1}^kD^2_{ij} f(\le<x,P_N a_1\r>_H,\ldots,\le<x,P_N
a_k\r>_H)\le<\bar{G}(x) Q_1P_Na_i,\bar{G}(x) Q_1P_Na_j\r>_H}\\
\vs \ds{+\sum_{i=1}^kD_i f(\le<x,P_N
a_1\r>_H,\ldots,\le<x,P_N a_k\r>_H)\le(\le<x,A_1 P_N
a_i\r>_H+\le<\bar{B}(x),P_N a_i\r>_H\r).}
\end{array}\end{equation}
$\mathcal{L}_{av}$ is the Kolmogorov operator associated with the
averaged equation \eqref{averaged}. Notice that the expression
above is  meaningful, as for any $i=1,\ldots,k$ \[Q_1 P_N
a_i=\sum_{k=1}^N\la_{1,k}\le<a_i,e_{1,k}\r>_H e_{1,k}
\in\,L^{\infty}(D),\] and
\[A_1 P_N a_i=-\sum_{k=1}^N\a_{1,k}\le<a_i,e_{1,k}\r>_H e_{1,k}
\in\,H.\]

\begin{Definition}
A probability measure $\mathbb{Q}$ on $(C_x([0,T];H),\mathcal{E})$
is a solution of the martingale problem with parameters
$(x,A_1,\bar{B},\bar{G},Q_1)$ if the process
\[\varphi(\eta(t))-\int_0^t\mathcal{L}_{\text{av}}\,\varphi(\eta(s))\,ds,\
\ \ \ t \in\,[0,T],\] is an $\mathcal{E}_t$-martingale on
$(C_x([0,T];H),\mathcal{E},\mathbb{Q})$, for any $\varphi
\in\,\mathcal{R}(H)$.

\end{Definition}

As the coefficients $\bar{B}$ and $\bar{G}$ are
Lipschitz-continuous, the solution $\mathbb{Q}$ to the martingale
problem with parameters $(x,A_1,\bar{B},\bar{G},Q_1)$ exists, is
unique and coincides with $\mathcal{L}(\bar{u})$ (to this purpose
see \cite[Chapter 8]{dpz1} and also \cite[Theorem 5.9 and Theorem
5.10]{tz}).

\section{A priori bounds for the solution of system \eqref{eq0}}
\label{sec4}

In the present section we prove uniform estimates, with respect to
$\e \in\,(0,1]$, for the solution $u_\e$ of the slow motion
equation and for the solution  $v_\e$ of the fast  motion equation
in system \eqref{eq0}. As a consequence,  we will obtain the
tightness of the family $\{\mathcal{L}(u_\e)\}_{\e \in\,(0,1]}$ in
$C([0,T];H)$, for any $T>0$.

In what follows, for the sake of simplicity, we denote by
$|\cdot|_\theta$ the norm $|\cdot|_{D((-A_1)^\theta)}$. Moreover,
for any $\e>0$   we denote
\begin{equation}
\label{gammau} \Gamma_{1,\e}(t):=\int_0^t
e^{(t-s)A_1}G_1(u_\e(s),v_\e(s))\,dw^{Q_1}(s),\ \ \ \ \ t\geq 0.
\end{equation}

\begin{Lemma}
\label{lem1} Under Hypotheses \ref{H1} and \ref{H2},
there exists $\bar{\theta}>0$ and $\bar{p}\geq 1$ such that for
any $\e>0$, $T>0$, $p>\bar{p}$ and $\theta \in\,[0,\bar{\theta}]$
\begin{equation}
\label{sola35} \E\sup_{t\leq T}\,|\Gamma_{1,\e}(t)|_{\theta}^p\leq
c_{T,p,\theta}\int_0^T\le(1+\E\,|u_\e(r)|_H^p+\E\,|v_\e(r)|_H^p\r)\,ds,
\end{equation}
for some positive constant $c_{T,p,\theta}$ which is independent of $\e>0$.
\end{Lemma}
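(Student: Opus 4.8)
The plan is to combine the factorization method of Da Prato--Kwapie\'n--Zabczyk with the Hilbert--Schmidt estimate for the colored noise already worked out in the proof of Lemma \ref{rho}. Fix a factorization exponent $\a \in\,(0,1/2)$ and write
\[
\Gamma_{1,\e}(t)=\frac{\sin(\pi\a)}{\pi}\int_0^t(t-s)^{\a-1}e^{(t-s)A_1}Y_\a(s)\,ds,\qquad Y_\a(s):=\int_0^s(s-\si)^{-\a}e^{(s-\si)A_1}G_1(u_\e(\si),v_\e(\si))\,dw^{Q_1}(\si).
\]
Applying $(-A_1)^\theta$ and using the analytic smoothing estimate $|(-A_1)^\theta e^{\tau A_1}|_{\mathcal{L}(H)}\leq c\,\tau^{-\theta}$, I bound $|\Gamma_{1,\e}(t)|_\theta\leq c\int_0^t(t-s)^{\a-1-\theta}|Y_\a(s)|_H\,ds$. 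By H\"older's inequality, provided $\a>\theta+1/p$ so that the kernel exponent satisfies $(\a-1-\theta)\,p/(p-1)>-1$, the deterministic convolution is a bounded map and taking the supremum in $t$ gives
\[
\E\sup_{t\leq T}|\Gamma_{1,\e}(t)|_\theta^p\leq c_{T,p,\theta}\int_0^T\E\,|Y_\a(s)|_H^p\,ds.
\]

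Next I estimate $\E|Y_\a(s)|_H^p$ via the Burkholder--Davis--Gundy inequality in $H$,
\[
\E|Y_\a(s)|_H^p\leq c_p\,\E\le(\int_0^s(s-\si)^{-2\a}\|e^{(s-\si)A_1}G_1(u_\e(\si),v_\e(\si))Q_1\|_{2}^2\,d\si\r)^{p/2}.
\]
The crucial ingredient is the Hilbert--Schmidt bound. Repeating verbatim the computation of Lemma \ref{rho} that leads to \eqref{sola20}, now with $A_1,Q_1,\beta_1,\rho_1$ in place of $A_2,Q_2,\beta_2,\rho_2$, and using that $G_1(x,y)$ is a multiplication operator with $\|G_1(x,y)\|_{\mathcal{L}(L^\infty(D),H)}^2\leq c\,(1+|x|_H^2+|y|_H^2)$ by the Lipschitz continuity (hence linear growth) of $g_1$ from Hypothesis \ref{H2}, I obtain, with $\gamma_1:=\beta_1(\rho_1-2)/\rho_1<1$ by \eqref{sola1},
\[
\|e^{\tau A_1}G_1(x,y)Q_1\|_{2}^2\leq c\,\tau^{-\gamma_1}\,(1+|x|_H^2+|y|_H^2),\qquad \tau>0.
\]

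Plugging this in and imposing $2\a+\gamma_1<1$, the weight $(s-\si)^{-2\a-\gamma_1}$ is integrable on $[0,s]$; hence the Jensen (power-mean) inequality applied to the finite measure $(s-\si)^{-2\a-\gamma_1}d\si$, together with $(1+a^2+b^2)^{p/2}\leq c_p(1+a^p+b^p)$, yields
\[
\E|Y_\a(s)|_H^p\leq c_{T,p}\int_0^s(s-\si)^{-2\a-\gamma_1}\le(1+\E|u_\e(\si)|_H^p+\E|v_\e(\si)|_H^p\r)\,d\si.
\]
Integrating over $s\in\,[0,T]$ and interchanging the order of integration by Fubini (the inner integral $\int_\si^T(s-\si)^{-2\a-\gamma_1}\,ds$ is bounded on $[0,T]$) gives $\int_0^T\E|Y_\a(s)|_H^p\,ds\leq c_{T,p}\int_0^T(1+\E|u_\e(\si)|_H^p+\E|v_\e(\si)|_H^p)\,d\si$, which combined with the factorization bound produces \eqref{sola35}. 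The two constraints $\theta+1/p<\a$ and $2\a+\gamma_1<1$ are compatible exactly when $\theta<(1-\gamma_1)/2-1/p$, so the thresholds are fixed by choosing any $\bar\theta<(1-\gamma_1)/2$ and $\bar p$ large enough that $(1-\gamma_1)/2-1/\bar p>\bar\theta$, guaranteeing an intermediate $\a$ for every $\theta\in\,[0,\bar\theta]$ and $p>\bar p$. The main obstacle is precisely the Hilbert--Schmidt estimate: since $Q_1$ is only compact and not trace class, $\|e^{\tau A_1}G_1Q_1\|_2$ cannot be controlled directly, and the entire argument rests on extracting the decay $\tau^{-\gamma_1}$ from the spectral summability conditions \eqref{esistenza}--\eqref{sola1} through the interpolation trick of Lemma \ref{rho}.
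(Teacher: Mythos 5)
Your proof is correct and follows essentially the same route as the paper: factorization of $\Gamma_{1,\e}$, the Burkholder--Davis--Gundy inequality, and the interpolation Hilbert--Schmidt estimate of Lemma \ref{rho} transplanted to $(A_1,Q_1,\beta_1,\rho_1)$, with linear growth of $\|G_1\|_{\mathcal{L}(L^\infty(D),H)}$ closing the loop. The only (harmless) difference is bookkeeping: you absorb the $(-A_1)^\theta$-smoothing into the deterministic factorization kernel via $(t-s)^{\a-1-\theta}$ under the constraint $\a>\theta+1/p$, whereas the paper inserts $(-A_1)^\theta$ inside the stochastic convolution $Y_{\e,\a}$, which shifts the singularity to the exponent $2\a+\bigl(\beta_1(\rho_1-2)+\theta(\rho_1+2)\bigr)/\rho_1$; both choices yield admissible thresholds $\bar\theta,\bar p$ and the same estimate \eqref{sola35}.
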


\begin{proof}
By using a factorization argument, for any $\a \in\,(0,1/2)$ we
have
\[\Gamma_{1,\e}(t)=c_\a\int_0^t (t-s)^{\a-1}
e^{(t-s)A_1} Y_{\e,\a}(s)\,ds,\] where
\[Y_{\e,\a}(s):=\int_0^s
(s-r)^{-\a}e^{(s-r)A_1}G_{1,\e}(s)\,dw^{Q_1}(r)\]
and \[G_{1,\e}(s):=G_1(u_\e(s),v_\e(s)).\]
 For any
$p>1/\a$ and $\theta>0$, we have
\begin{equation}
\label{sola40}
 \sup_{s\leq t}\,|\Gamma_{1,\e}(s)|_\theta^p\leq
c_{\a,p}\le(\int_0^ts^{(\a-1)\frac{p}{p-1}}\,ds\r)^{p-1}\int_0^t|Y_{\e,\a}(s)|_\theta^p\,ds
= c_{\a,p} t^{\a p-1}\int_0^t|Y_{\e,\a}(s)|_\theta^p\,ds.
\end{equation}
According to the Burkholder-Davis-Gundy inequality we have
\[\begin{array}{l}
\ds{\E\int_0^t|Y_{\e,\a}(s)|_\theta^p\,ds\leq
c_p\int_0^t\E\le(\int_0^s(s-r)^{-2\a}\|(-A_1)^\theta
e^{(s-r)A_1}G_{1,\e}(s)Q_1\|_{2}^2\,dr\r)^{\frac p2}\,ds.}
\end{array}\]
By  the same arguments as those used in the proof of Lemma \ref{rho}, we
have
\[\begin{array}{l}
\ds{\|(-A_1)^\theta e^{(s-r)A_1}G_{1,\e}(s)Q_1\|_{2}^2\leq
\sup_{k \in\,\nat}\,|(-A_1)^\theta e^{(s-r)A_1} G_{1,\e}(s)
e_{1,k}|_H^{\frac 4{\rho_1}}|e_{1,k}|_H^{-\frac
4{\rho_1}}}\\
\vs \ds{\times\  \kappa_1^{\frac 2{\rho_1}}\le(\sum_{k=1}^\infty
\le|(-A_1)^\theta e^{(s-r)A_1} G_{1,\e}(s)
e_{1,k}\r|_H^2\r)^{\frac{\rho_1-2}{\rho_1}}}\\
\vs \ds{ \leq c_\theta\,\kappa_1^{\frac 2{\rho_1}}(s-r)^{-\frac{4
\theta}{\rho_1}}\|G_{1,\e}(s)\|_{\mathcal{L}(L^\infty(D),H)}^{\frac
4{\rho_1}}\le(\sum_{k=1}^\infty \le|(-A_1)^\theta e^{(s-r)A_1}
G_{1,\e}(s) e_{1,k}\r|_H^2\r)^{\frac{\rho_1-2}{\rho_1}}.}
\end{array}\]
By proceeding again as in the proof of Lemma \ref{rho} we have
\[\sum_{k=1}^\infty \le|(-A_1)^\theta e^{(s-r)A_1}
G_{1,\e}(s) e_{1,k}\r|_H^2\leq
\|G_{1,\e}(s)\|_{\mathcal{L}(L^\infty(D),H)}^2\sum_{k=1}^\infty
|e_{1,k}|_0^2 \a_{1,k}^\theta e^{-\a_{1,k}(s-r)},\] and then,
thanks to \eqref{beta},  we get
\[\begin{array}{l}
\ds{\sum_{k=1}^\infty \le|(-A_1)^\theta e^{(s-r)A_1} G_{1,\e}(s)
e_{1,k}\r|_H^2\leq
\le(\frac{\beta_1+\theta}e\r)^{\beta_1+\theta}\zeta_1(s-r)^{-(\beta_1+\theta)}
\|G_{1,\e}(s)\|_{\mathcal{L}(L^\infty(D),H)}^2.} \end{array}\]
Therefore, if we fix $\bar{\theta}>0$ such that
\[\frac{\beta_1(\rho_1-2)+ \bar{\theta}(\rho_1+2)}{\rho_1}<1,\]
and if we set
\[K_{1,\theta}:=c_\theta\,\le(\frac{\beta_1+\theta}e\r)^{\frac{(\beta_1+\theta)
(\rho_1-2)}{\rho_1}}\zeta_1^{\frac{\rho_1-2}{\rho_1}}\kappa_1^{\frac
2{\rho_1}},\]
 for any $\theta \in\,[0,\bar{\theta}]$ we have
\[\begin{array}{l}
\ds{\E\int_0^t|Y_{\e,\a}(s)|_H^p\,ds}\\
\vs \ds{\leq c_p\,K_{1,\theta}^{\frac p2}\int_0^t\E\le(\int_0^s
\,(s-r)^{-\le(2\a+\frac{\beta_1(\rho_1-2)+
\theta(\rho_1+2)}{\rho_1}\r)}
\|G_{1,\e}(r)\|_{\mathcal{L}(L^\infty(D),H)}^2\,dr\r)^{\frac
p2}\,ds.} \end{array}\] Hence, if we  choose $\bar{\a}>0$  such
that
\[2\bar{\a}+\frac{\beta_1(\rho_1-2)+
\bar{\theta}(\rho_1+2)}{\rho_1}<1\] and  $p> \bar{p}:=1/\bar{\a}$,
by the Young inequality  this yields for $t \in\,[0,T]$
\[\begin{array}{l}
\ds{\E\int_0^t|Y_{\e,\bar{\a}}(s)|_{\theta}^p\,ds}\\
\vs \ds{\leq c_p\,K_{1,\theta}^{\frac p2} \le(\int_0^t
s^{-\le(2\bar{\a}+\frac{\beta_1(\rho_1-2)+
\theta(\rho_1+2)}{\rho_1}\r)} \,ds\r)^{\frac p2}\E\int_0^t
\|G_{1,\e}(s)\|_{\mathcal{L}(L^\infty(D),H)}^p\,ds}\\
\vs \ds{\leq c_{T,p}\ \int_0^t \le(1+\E\,|u_\e(s)|_H^p+\E\,|v_\e(s)|_H^p\r)\,ds.}
\end{array}\]
Thanks to \eqref{sola40}, this implies \eqref{sola35}.

\end{proof}
 Now, we can prove the first a-priori bounds for the solution
 $u_\e$ of the slow motion equation and for the solution $v_\e$ of the fast
 motion equation in system \eqref{eq0}.

\begin{Proposition}
\label{prop1} Under Hypotheses \ref{H1} and \ref{H2}, for any $T>0$
and $p\geq 1$ there exists a positive constant $c(p,T)$ such that
for any $x,y \in\,H$ and $\e \in\,(0,1]$
\begin{equation}
\label{unifue} \E \sup_{t \in\,[0,T]}\,|u_\e(t)|_H^p\leq
c(p,T)(1+|x|_H^p+|y|_H^p),
\end{equation}
and
\begin{equation}
\label{unifve} \int_0^T\E\,|v_\e(t)|_H^p\,dt\leq
c(p,T)(1+|x|_H^p+|y|_H^p).
\end{equation}
Moreover,
\begin{equation}
\label{unifvebis} \sup_{t \in\,[0,T]}\,\E\,|v_\e(t)|_H^2\leq c_T
(1+|x|_H^2+|y|_H^2).
\end{equation}
\end{Proposition}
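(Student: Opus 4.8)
The plan is to prove the three estimates by a coupled argument: first bound the slow motion $u_\e$ in terms of the time-integrated moments of $v_\e$, then bound $v_\e$ \emph{pointwise in time} in terms of the running supremum of $u_\e$, and finally close the loop with Gronwall's lemma. The structural facts that make this work are the dissipativity of the fast equation (the spectral gap $\la$ of $A_2$ together with $L_{b_2}<\la$), the sublinear growth $\gamma<1$ of $g_2$ in \eqref{growthg2}, and an exact cancellation of the powers of $\e$ in the stochastic convolution of the fast equation. It suffices to treat $p>\bar p$ (with $\bar p$ as in Lemma \ref{lem1}), the case of small $p$ following by Jensen's inequality.

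For the slow motion I would start from the mild formulation $u_\e(t)=e^{tA_1}x+\int_0^t e^{(t-s)A_1}B_1(u_\e(s),v_\e(s))\,ds+\Gamma_{1,\e}(t)$, take the $H$-norm, raise to the power $p$, and take $\sup_{[0,t]}$ and expectation. The linear growth $|B_1(u,v)|_H\leq c(1+|u|_H+|v|_H)$ together with H\"older's inequality controls the drift, while the stochastic convolution is handled by Lemma \ref{lem1} with $\theta=0$. Setting $\phi(t):=\E\sup_{s\leq t}|u_\e(s)|_H^p$, this yields
\[\phi(t)\leq c\,(1+|x|_H^p)+c\int_0^t\phi(s)\,ds+c\int_0^t\E\,|v_\e(s)|_H^p\,ds,\]
which is the first half of the coupling.

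For the fast motion, I would isolate the stochastic convolution $\Psi_\e(t):=\e^{-1/2}\int_0^t e^{(t-s)A_2/\e}G_2(u_\e(s),v_\e(s))\,dw^{Q_2}(s)$ and set $\Lambda_\e:=v_\e-\Psi_\e$, which solves pathwise the deterministic equation $\Lambda_\e'=\e^{-1}A_2\Lambda_\e+\e^{-1}B_2(u_\e,v_\e)$, $\Lambda_\e(0)=y$. Using $\le<A_2 z,z\r>_H\leq-\la|z|_H^2$ and splitting $B_2(u_\e,v_\e)=[B_2(u_\e,v_\e)-B_2(u_\e,\Psi_\e)]+B_2(u_\e,\Psi_\e)$, the first bracket has norm at most $L_{b_2}|\Lambda_\e|_H$, so the energy estimate for $|\Lambda_\e|_H^p$ (justified through Yosida approximations of $A_2$) exhibits the net dissipativity $-\delta:=-(\la-L_{b_2})<0$ and, after a Young inequality,
\[|\Lambda_\e(t)|_H^p\leq e^{-p\delta t/2\e}|y|_H^p+\frac c\e\int_0^t e^{-p\delta(t-s)/2\e}\le(1+|u_\e(s)|_H^p+|\Psi_\e(s)|_H^p\r)\,ds.\]
Since $\e^{-1}\int_0^t e^{-p\delta(t-s)/2\e}\,ds\leq 2/(p\delta)$, this bounds $\sup_{s\leq t}\E|\Lambda_\e(s)|_H^p$ by $c\,(1+|y|_H^p+\sup_{s\leq t}\E|u_\e(s)|_H^p+\sup_{s\leq t}\E|\Psi_\e(s)|_H^p)$, uniformly in $\e\in(0,1]$. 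The crux is the $\e$-uniform control of $\Psi_\e$: repeating the Hilbert--Schmidt computation of Lemma \ref{rho} gives $\|e^{\sigma A_2}G_2 Q_2\|_2^2\leq K_2\,\sigma^{-\zeta}e^{-c_0\sigma}\|G_2\|_{\mathcal{L}(L^\infty(D),H)}^2$ with $\zeta:=\beta_2(\rho_2-2)/\rho_2<1$; inserting $\sigma=(t-s)/\e$ into the Burkholder--Davis--Gundy inequality, the outer factor $\e^{-1}$, the factor $\e^{\zeta}$ from $\sigma^{-\zeta}$, and the Jacobian $\e$ of the substitution $\tau=(t-s)/\e$ cancel exactly, so the $L^p$-factorization of Lemma \ref{lem1} delivers the $\e$-free bound $\sup_t\E|\Psi_\e(t)|_H^p\leq c\,\sup_s\E\|G_2(u_\e(s),v_\e(s))\|_{\mathcal{L}(L^\infty(D),H)}^p$. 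By \eqref{growthg2} the right-hand side is at most $c(1+\sup_s\E|u_\e(s)|_H^p+(\sup_s\E|v_\e(s)|_H^p)^\gamma)$, and since $\gamma<1$ the last term is absorbed by Young's inequality. Collecting these bounds and using $v_\e=\Lambda_\e+\Psi_\e$ gives the pointwise-in-time estimate $\E\,|v_\e(s)|_H^p\leq c\,(1+|y|_H^p+\phi(s))$.

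Substituting this pointwise bound into the slow-motion inequality gives $\phi(t)\leq c(1+|x|_H^p+|y|_H^p)+c\int_0^t\phi(s)\,ds$, so Gronwall's lemma yields \eqref{unifue}; estimate \eqref{unifve} then follows by integrating $\E|v_\e(s)|_H^p\leq c(1+|y|_H^p+\phi(s))$ over $[0,T]$, and \eqref{unifvebis} is its special case $p=2$ (read as a supremum bound). I expect the main obstacle to be twofold: making rigorous the $\e$-cancellation in $\Psi_\e$ jointly with the absorption of the sublinear term $(\sup\E|v_\e|_H^p)^\gamma$, and, more subtly, arranging the fast estimate in the pointwise form $\E|v_\e(s)|_H^p\leq c(1+|y|_H^p+\phi(s))$ rather than in terms of $\sup_{[0,T]}$, since only the former keeps $\phi(s)$ \emph{inside} the integral and lets the slow--fast Gronwall coupling close without a spurious large factor.
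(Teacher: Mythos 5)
Your proposal is correct, and at its core it runs on exactly the paper's mechanisms: isolate the stochastic convolutions, exploit the net dissipativity $\la-L_{b_2}>0$ of the fast drift through the splitting $B_2(u_\e,v_\e)=[B_2(u_\e,v_\e)-B_2(u_\e,\Gamma_{2,\e})]+B_2(u_\e,\Gamma_{2,\e})$, cancel the $\e^{-p/2}$ prefactor of the fast convolution against the $O(\e)$ mass of the kernel $((t-s)/\e)^{-\beta_2(\rho_2-2)/\rho_2}e^{-c(t-s)/\e}\,ds$, absorb the sublinear term coming from $\gamma<1$ in \eqref{growthg2} by Young's inequality, and close with Gronwall. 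The genuine difference is organizational. The paper integrates the comparison inequality \eqref{sola64} in time and closes through the \emph{time-integrated} quantity $\int_0^t\E|v_\e(s)|_H^p\,ds$ (using Young's inequality for convolutions to get \eqref{sola63bis} from \eqref{sola63}), deriving the sup-in-time bound only afterwards and only for $p=2$; you instead establish the sup-in-time moment bound $\sup_{s\leq t}\E|v_\e(s)|_H^p\leq c(1+|y|_H^p+\phi(t))$ for \emph{all} $p$, which delivers \eqref{unifue}, \eqref{unifve} and \eqref{unifvebis} simultaneously and is in fact slightly stronger than what the paper states. The price is one step the paper's route avoids: for $p>2$, pulling the expectation inside the $p/2$-power in the pointwise BDG bound for $\Gamma_{2,\e}(t)$ requires a H\"older/Jensen argument with respect to the kernel measure above, whose total mass is $O(\e)$ --- that is where the cancellation you describe is actually realized for general $p$ (your bookkeeping as written is the $p=2$ accounting).

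Two cautions. First, your appeal to ``the $L^p$-factorization of Lemma \ref{lem1}'' is a misattribution: factorization is what produces $\E\sup_t$ for the \emph{slow} convolution, and applied naively to the fast convolution it leaves an uncancelled factor $\e^{-\a p}$ --- indeed one should not expect $\E\sup_{t\leq T}|\Gamma_{2,\e}(t)|_H^p$ to be bounded uniformly in $\e$. Since you only claim $\sup_t\E$, plain BDG at fixed $t$ plus the H\"older step just described suffices, and is what must be used. Second, absorbing $(\sup_{s\leq t}\E|v_\e(s)|_H^p)^\gamma$ by Young presupposes that this quantity is finite for each fixed $\e$; this holds because $v_\e\in\mathcal{C}_{T,p}$ by well-posedness, but it is worth saying explicitly. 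With these two repairs your argument is complete; your direct estimate of the slow equation on its mild formulation, in place of the paper's differential inequality for $\La_{1,\e}=u_\e-\Gamma_{1,\e}$, is an inessential variant since only linear growth of $B_1$ and Lemma \ref{lem1} with $\theta=0$ are used.
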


\begin{proof}
Let $\e>0$ and $x,y \in\,H$ be fixed once for all and let
$\Gamma_{1,\e}(t)$ be the process defined in \eqref{gammau}. If we
set $\La_{1,\e}(t):=u_\e(t)-\Gamma_{1,\e}(t)$, we have
\[\frac{d}{dt}\La_{1,\e}(t)=A_1
\La_{1,\e}(t)+B_1(\La_{1,\e}(t)+\Gamma_{1,\e}(t),v_\e(t)),\ \ \ \
\ \La_{1,\e}(0)=x,\]and then for any $p\geq 2$ we have
\[\begin{array}{l}
\ds{\frac 1p\frac d{dt}|\La_{1,\e}(t)|_H^p=\le<A_1
\La_{1,\e}(t),\La_{1,\e}(t)\r>_H\,|\La_{1,\e}(t)|_H^{p-2}}\\
\vs \ds{+\le<B_1(\La_{1,\e}(t)+\Gamma_{1,\e}(t),v_\e(t))-
B_1(\Gamma_{1,\e}(t),v_\e(t)),\La_{1,\e}(t)\r>_H\,\,|\La_{1,\e}(t)|_H^{p-2}}\\
\vs \ds{+
\le<B_1(\Gamma_{1,\e}(t),v_\e(t)),\La_{1,\e}(t)\r>_H\,\,|\La_{1,\e}(t)|_H^{p-2}\leq c_p\,|\La_{1,\e}(t)|_H^p+c_p|B_1(\Gamma_{1,\e}(t),v_\e(t))|_H^p}\\
\vs \ds{\leq
c_p\,|\La_{1,\e}(t)|_H^{p}+c_p\le(1+|\Gamma_{1,\e}(t)|_H^p+|v_\e(t)|_H^p\r).}
\end{array}\]
This implies that
\[|\La_{1,\e}(t)|_H^{p}\leq e^{c_p t}|x|_H^p+c_p\int_0^t
e^{c_p(t-s)}\le(1+|\Gamma_{1,\e}(s)|_H^p+|v_\e(s)|_H^p\r)\,ds,\]
so that, for any $t \in\,[0,T]$
\[\begin{array}{l}
\ds{|u_\e(t)|_H^p\leq c_p\,|\Gamma_{1,\e}(t)|_H^p+c_p\,e^{c_p
t}|x|_H^p+c_p\int_0^t
e^{c_p(t-s)}\le(1+|\Gamma_{1,\e}(s)|_H^p+|v_\e(s)|_H^p\r)\,ds}\\
\vs \ds{\leq c_{T,p}\le(1+|x|_H^p+\sup_{s\leq
t}|\Gamma_{1,\e}(s)|_H^p+\int_0^t |v_\e(s)|_H^p\,ds\r).}
\end{array}\]
According to \eqref{sola35} (with $\theta=0$), we obtain
\[\E\sup_{s\leq t} |u_\e(s)|_H^p\leq
c_{T,p}\le(1+|x|_H^p\r)+c_{T,p}\,\int_0^t
\E\,|v_\e(s)|_H^p\,ds+c_{T,p}\int_0^t \le(1+\E\,\sup_{r\leq
s}|u_\e(r)|_H^p\r)\,ds,\] and hence, by comparison,
\begin{equation}
\label{sola23} \E\sup_{s\leq t} |u_\e(s)|_H^p\leq
c_{T,p}\le(1+|x|_H^p+\int_0^t \E\,|v_\e(s)|_H^p\,ds\r).
\end{equation}
Now, we have to estimate
\[\int_0^t \E\,|v_\e(s)|_H^p\,ds.\]
If we define
\[\Gamma_{2,\e}(t):=\frac 1{\sqrt{\e}}\int_0^t e^{\frac{(t-s)}{\e}A_2}
G_2(u_\e(s),v_\e(s))\,dw^{Q_2}(s),\] and set
$\La_{2,\e}(t):=v_\e(t)-\Gamma_{2,\e}(t)$, we have
\[\frac d{dt}\La_{2,\e}(t)=\frac 1\e\le[A_2
\La_{2,\e}(t)+B_2(u_\e(t),\La_{2,\e}(t)+\Gamma_{2,\e}(t))\r],\ \ \
\ \ \La_{2,\e}(0)=y.\] Hence, as before, for any $p\geq 1$ we have
\[\begin{array}{l}
\ds{\frac 1p\frac d{dt}|\La_{2,\e}(t)|_H^p=\frac 1\e\le<A_2
\La_{2,\e}(t),\La_{2,\e}(t)\r>_H\,|\La_{2,\e}(t)|_H^{p-2}}\\
\vs \ds{+\frac 1\e\le<B_2(u_\e(t),\La_{2,\e}(t)+\Gamma_{2,\e}(t))-
B_1(u_\e(t),\Gamma_{2,\e}(t)),\La_{2,\e}(t)\r>_H\,\,|\La_{2,\e}(t)|_H^{p-2}}\\
\vs \ds{+ \frac
1\e\le<B_2(u_\e(t),\Gamma_{2,\e}(t)),\La_{2,\e}(t)\r>_H\,\,|\La_{2,\e}(t)|_H^{p-2}}\\
\vs \ds{ \leq
-\frac{\la-L_{b_2}}{2\e}\,|\La_{2,\e}(t)|_H^p+\frac{c_p}\e\le(1+|u_\e(t)|_H^p+|\Gamma_{2,\e}(t)|_H^p\r).}
\end{array}\]
By comparison this yields
\begin{equation}
 \label{sola64}
\begin{array}{l} \ds{|v_\e(t)|_H^p\leq c_p\,
|\La_{2,\e}(t)|_H^p+c_p\,
|\Gamma_{2,\e}(t)|_H^p}\\
\vs \ds{\leq
c_p\,e^{-\frac{p(\la-L_{b_2})}{2\e}t}|y|_H^p+\frac{c_p}\e\int_0^t
e^{-\frac{p(\la-L_{b_2})}{2\e}(t-s)}\le(1+|u_\e(s)|_H^p+|\Gamma_{2,\e}(s)|_H^p\r)\,ds+c_p\,|\Gamma_{2,\e}(t)|_H^p.}
\end{array}\end{equation}
Therefore, by integrating with respect to $t$, we easily obtain
\begin{equation}
\label{sola25}
\begin{array}{l} \ds{\int_0^t|v_\e(s)|_H^p\,ds\leq
c_p\,\le(\e\,|y|_H^p+\int_0^t|\Gamma_{2,\e}(s)|_H^p\,ds+\int_0^t|u_\e(s)|_H^p\,ds+1\r).}
\end{array}\end{equation}
According to the Burkholder-Davis-Gundy inequality and to
\eqref{sola20}, we have
\begin{equation}
\label{sola63}
\begin{array}{l} \ds{\E\,|\Gamma_{2,\e}(s)|_H^p\leq
c_p\,\e^{-\frac p2}\,\E\le(\int_0^s\|e^{\frac{(s-r)}{\e}A_2}
G_2(u_\e(r),v_\e(r))
Q_2\|_{2}^2\,dr\r)^{\frac p2}}\\
\vs \ds{\leq c_p\,K_2^{\frac p2}\,\e^{-\frac
p2}\,\E\le(\int_0^s\le(\frac{s-r}\e\r)^{-\frac{\beta_2(\rho_2-2)}{\rho_2}}
e^{-\frac{\la(\rho_2+2)}{\e \rho_2}
(s-r)}\,\|G_2(u_\e(r),v_\e(r))\|_{\mathcal{L}(L^\infty(D),H)}^2\,dr\r)^{\frac
p2}}\\
\vs \ds{\leq c_p\,K_2^{\frac p2}\,\e^{-\frac
p2}\,\E\le(\int_0^s\le(\frac{s-r}\e\r)^{-\frac{\beta_2(\rho_2-2)}{\rho_2}}
e^{-\frac{\la(\rho_2+2)}{\e \rho_2}
(s-r)}\le(1+|u_\e(r)|_H^2+|v_\e(r)|_H^{2\gamma}\r)\,dr\r)^{\frac
p2},}
\end{array}
\end{equation}
so that
\begin{equation}\label{sola63bis}
\int_0^t\E\,|\Gamma_{2,\e}(s)|_H^p\,ds\leq c_{p}\,\int_0^t
\le(1+\E\,|u_\e(s)|_H^p+\E\,|v_\e(s)|_H^{p\gamma}\r)\,ds.\end{equation}
Due to \eqref{sola25} this allows to conclude
\[\int_0^t\E\,|v_\e(s)|_H^p\,ds\leq
c_p\,\le(\e\,|y|_H^p+\int_0^t\le(1+\E\,|u_\e(s)|_H^p\r)\,ds+\int_0^t\E\,|v_\e(s)|_H^{p\gamma}\,ds+1\r),\]
and then, as $\gamma$ is assumed to be strictly less than $1$, if
$\e \in\,(0,1]$ and $t \in\,[0,T]$ we obtain
\[\int_0^t\E\,|v_\e(s)|_H^p\,ds\leq \frac 12
\int_0^t\E\,|v_\e(s)|_H^p\,ds+c_p\,|y|_H^p+
c_p\,\int_0^t\E\,|u_\e(s)|_H^p\,ds+c_{p,T}.\] This yields
\begin{equation}
\label{sola26} \int_0^t\E\,|v_\e(s)|_H^p\,ds\leq c_p\,|y|_H^p+
c_p\,\int_0^t\E\,\sup_{r\leq s}|u_\e(r)|_H^p\,ds+c_{p,T}.
\end{equation}
Hence, if we plug \eqref{sola26} into \eqref{sola23}, we get
\[\E\,\sup_{s\leq t} |u_\e(s)|_H^p\leq
c_{T,p}\le(1+|x|_H^p+|y|_H^p\r)+c_{T,p}\,\int_0^t\E\,\sup_{r\leq
s}|u_\e(r)|_H^p\,ds,\] and from the Gronwall lemma \eqref{unifue}
follows. Now, in view of estimates \eqref{unifue} and
\eqref{sola63bis}, from \eqref{sola25} we obtain \eqref{unifve}.

Finally, let us prove \eqref{unifvebis}. From \eqref{sola63} with
$p=2$ we get
\[\sup_{t\leq T}\E\,|\Gamma_{2,\e}(t)|_H^2\leq c_2\,|y|_H^2+c_2\le(1+\sup_{t \leq T}
\E\,|u_\e(t)|_H^2+\sup_{t \leq T} \E\,|v_\e(t)|_H^{2\gamma}\r),\]
and then, if we substitute in \eqref{sola64}, we obtain
\[\begin{array}{l}
\ds{\E\,|v_\e(t)|_H^2\leq c_2\le(1+|y|_H^2+\sup_{t \leq T}
\E\,|u_\e(t)|_H^2\r)+c_2\,\sup_{t \leq T}
\E\,|v_\e(t)|_H^{2\gamma}.}
\end{array}\]
As $\gamma<1$, for any $\eta>0$ we can fix $c_{\eta}>0$ such that
\[c_2\,\sup_{t \leq T}
\E\,|v_\e(t)|_H^{2\gamma}\leq \eta\,\sup_{t \leq T}
\E\,|v_\e(t)|_H^{2}+c_{\eta}.\] Therefore, if we take $\eta\leq
1/2$, we obtain
\[\frac 12 \sup_{t \leq T}
\E\,|v_\e(t)|_H^{2}\leq c_2\le(1+|y|_H^2+\sup_{t \leq T}
\E\,|u_\e(t)|_H^2\r),\] and \eqref{unifvebis} follows from
\eqref{unifue}.

\end{proof}

Next, we prove uniform bounds for $u_\e$ in
$L^\infty(0,T;D((-A_1)^{\a}))$, for some $\a>0$.

\begin{Proposition}
\label{prop2} Under Hypotheses \ref{H1} and \ref{H2}, there exists
$\bar{\a}>0$ such that for any $T>0$, $p\geq 1$, $x
\in\,D((-A_1)^\a)$, with $\a \in\,[0, \bar{\a}]$, and $y \in\,H$
\begin{equation}
\label{sola28} \sup_{\e \in\,(0,1]}\E\,\sup_{t \leq
T}|u_\e(t)|^p_{\a}\leq c_{T,\a,p}\le(1+|x|^p_\a+|y|^p_H\r),
\end{equation}
for some positive constant $c_{T,\a,p}$.
\end{Proposition}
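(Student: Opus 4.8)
The plan is to exploit the same decomposition $u_\e=\La_{1,\e}+\Gamma_{1,\e}$ used in the proof of Proposition \ref{prop1}, and to estimate the two pieces separately in the norm $|\cdot|_\a$. The key structural observation, which avoids any fixed-point argument in the stronger norm, is that on the right-hand side only the $H$-norms of $u_\e$ and $v_\e$ will appear, so that the a-priori bounds \eqref{unifue} and \eqref{unifve} can be invoked directly, without bootstrapping in $|\cdot|_\a$.

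First I would bound the stochastic convolution. Choosing $\bar{\a}\in(0,\bar\theta]$, with $\bar\theta$ as in Lemma \ref{lem1}, estimate \eqref{sola35} applied with $\theta=\a\leq\bar{\a}$ gives, for $p$ large,
\[\E\sup_{t\leq T}|\Gamma_{1,\e}(t)|_\a^p\leq c_{T,p,\a}\int_0^T\le(1+\E\,|u_\e(r)|_H^p+\E\,|v_\e(r)|_H^p\r)\,dr.\]
By \eqref{unifue} and \eqref{unifve} the right-hand side is bounded by $c_{T,p,\a}(1+|x|_H^p+|y|_H^p)$, uniformly in $\e\in(0,1]$.

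Next, for $\La_{1,\e}=u_\e-\Gamma_{1,\e}$, which satisfies the variation-of-constants formula
\[\La_{1,\e}(t)=e^{tA_1}x+\int_0^t e^{(t-s)A_1}B_1(u_\e(s),v_\e(s))\,ds,\]
I would apply $(-A_1)^\a$ and use the analytic smoothing estimate $\|(-A_1)^\a e^{\tau A_1}\|_{\mathcal L(H)}\leq c\,\tau^{-\a}$ for $\tau>0$, together with the boundedness of $e^{tA_1}$ on $D((-A_1)^\a)$ and the linear growth of $B_1$, to obtain
\[|\La_{1,\e}(t)|_\a\leq c\,|x|_\a+c\int_0^t (t-s)^{-\a}\le(1+|u_\e(s)|_H+|v_\e(s)|_H\r)\,ds.\]
Since $\a<1$ the singular kernel is integrable; applying H\"older's inequality in time with conjugate exponents $(p,q)$ chosen so that $\a q<1$ (which holds once $p$ is large enough, as $\a\leq\bar{\a}<1$), taking the supremum over $t\leq T$ and then the $p$-th moment yields
\[\E\sup_{t\leq T}|\La_{1,\e}(t)|_\a^p\leq c\,|x|_\a^p+c_{T,p,\a}\int_0^T\le(1+\E\,|u_\e(s)|_H^p+\E\,|v_\e(s)|_H^p\r)\,ds,\]
which is again controlled by $c_{T,p,\a}(1+|x|_\a^p+|y|_H^p)$ through \eqref{unifue}, \eqref{unifve} and the continuous embedding $D((-A_1)^\a)\hookrightarrow H$.

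Summing the two estimates and using $|u_\e|_\a\leq|\La_{1,\e}|_\a+|\Gamma_{1,\e}|_\a$ gives \eqref{sola28} for all $p$ larger than some threshold $p_0=p_0(\bar{\a})$; the remaining range $p\in[1,p_0]$ then follows from Jensen's inequality, since $(\E\sup_t|u_\e|_\a^{p_0})^{p/p_0}$ dominates $\E\sup_t|u_\e|_\a^p$ and $(1+|x|_\a^{p_0}+|y|_H^{p_0})^{p/p_0}\leq c(1+|x|_\a^p+|y|_H^p)$. The only genuinely delicate point is the bookkeeping of exponents: one must pick $\bar{\a}\leq\bar\theta$ small enough and $p$ large enough that both Lemma \ref{lem1} applies and the H\"older exponent satisfies $\a q<1$; everything else is a routine consequence of analytic smoothing and the already-established $H$-bounds.
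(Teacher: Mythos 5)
Your proposal is correct and follows essentially the same route as the paper: both split $u_\e$ into the free term $e^{tA_1}x$, the drift convolution (bounded via the analytic smoothing estimate $(t-s)^{-\a}$, H\"older in time, and linear growth of $B_1$), and the stochastic convolution handled by Lemma \ref{lem1} with $\theta=\a\leq\bar\theta$, all reduced to the $H$-bounds \eqref{unifue} and \eqref{unifve} from Proposition \ref{prop1} with no bootstrapping in $|\cdot|_\a$. The only differences are cosmetic: the paper fixes the H\"older exponent $2$ in the drift term (hence $\bar\a=\bar\theta\wedge 1/2$) where you keep a general exponent with $\a q<1$, and you make explicit the reduction of small $p$ to large $p$ via Jensen, which the paper leaves implicit here (stating it only in the proof of Proposition \ref{prop3}).
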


\begin{proof}
Assume that $x \in\,D((-A_1)^\a)$, for some $\a\geq 0$. We have
\[u_\e(t)=e^{tA_1} x+\int_0^t e^{(t-s)A_1}
B_1(u_\e(s),v_\e(s))\,ds+\int_0^t e^{(t-s)A_1}G_1(u_\e(s),v_\e(s))
\,dw^{Q_1}(s).\] If $\a<1/2$, $t\leq T$ and $p\geq 2$
\[\begin{array}{l}
\ds{\le|\int_0^t e^{(t-s)A_1} B_1(u_\e(s),v_\e(s))\,ds\r|^p_\a\leq
c_{p,\a}\le(\int_0^t (t-s)^{-\a} |B_1(u_\e(s),v_\e(s))|_H\,ds\r)^p}\\
\vs \ds{\leq
c_{p,\a}\le(\int_0^t(t-s)^{-\a}\le(1+|u_\e(s)|_H+|v_\e(s)|_H\r)\,ds\r)^p}\\
\vs \ds{\leq c_{p,\a}\le(1+\sup_{s\leq
T}|u_\e(s)|^p_H\r)T^{(1-\a)p}+c_{p,\a}\le(\int_0^T s^{-2\a}\,ds\r)^{\frac
p2} \le(\int_0^T|v_\e(s)|^p_H\,ds\r) T^{\frac{p-2}2},}
\end{array}\]
so that, thanks to \eqref{unifue} and \eqref{unifve},
\begin{equation}
\label{sola29} \E\,\sup_{t\leq T}\le|\int_0^t e^{(t-s)A_1}
B_1(u_\e(s),v_\e(s))\,ds\r|^p_\a \leq
c_{T,\a,p}\le(1+|x|^p_H+|y|^p_H\r).
\end{equation}
Concerning the stochastic term $\Gamma_{1,\e}(t)$, due to Lemma
\ref{lem1} and to \eqref{unifue} there exists $\bar{\theta}>0$
such that for any $\a\leq \bar{\theta}$ and $p\geq 1$
\begin{equation}
\label{sola41} \E\,\sup_{t\leq
T}\le|\Gamma_{1,\e}(t)\r|^p_{\a}\leq
c_{T,\a,p}\le(1+|x|^p_H+|y|^p_H\r).\end{equation}
 Hence, if we choose
$\bar{\a}:=\bar{\theta}\wedge 1/2$, thanks to \eqref{sola29} and
\eqref{sola41}, for any $p\geq 2$ we have
\[\begin{array}{l}
\ds{\E\sup_{t\leq T}\,|u_\e(t)|^p_\a\leq \sup_{t\leq T}\,|e^{t
A_1}x|^p_\a+\E\,\sup_{t\leq T}\le|\int_0^t e^{(t-s)A_1}
B_1(u_\e(s),v_\e(s))\,ds\r|^p_\a}\\
\vs
\ds{+\E\,\sup_{t\leq
T}\le|\Gamma_{1,\e}(t)\r|^p_{\a}\leq c_{T,\a,p}\le(1+|x|^p_\a+|y|^p_H\r).} \end{array}\]

\end{proof}

Next, we prove uniform bounds for the increments of the mapping $t
\in\,[0,T]\mapsto u_\e(t) \in\,H$.

\begin{Proposition}
\label{prop3} Under Hypotheses \ref{H1} and \ref{H2}, for any $\a>0$
there exists $\beta(\a)>0$ such that for any $T>0$, $p\geq 2$,
$|h|\leq 1$, $x \in\,D((-A_1)^{\a})$ and $y \in\,H$ it holds
\begin{equation}
\label{sola30} \sup_{\e \in\,(0,1]}\E\,|u_\e(t+h)-u_\e(t)|_H^p\leq
c_{T,\a,p} |h|^{ \beta(\a)\,p}\le(|x|_{\a}^p+|y|^p_H+1\r),\ \ \ \
t \in\,(0,T].
\end{equation}
\end{Proposition}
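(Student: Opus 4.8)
The plan is to work from the mild formulation
\[u_\e(t)=e^{tA_1}x+\int_0^t e^{(t-s)A_1}B_1(u_\e(s),v_\e(s))\,ds+\Gamma_{1,\e}(t),\]
with $\Gamma_{1,\e}$ as in \eqref{gammau}, and to control the three contributions to $u_\e(t+h)-u_\e(t)$ separately. First I would treat the deterministic semigroup term, writing $e^{(t+h)A_1}x-e^{tA_1}x=(e^{hA_1}-I)e^{tA_1}x$ and invoking the analyticity of $e^{tA_1}$ in the form $\|(e^{hA_1}-I)(-A_1)^{-\a}\|_{\mathcal{L}(H)}\leq c\,h^{\a}$, valid for $\a\in(0,1]$. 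Since $|(-A_1)^\a e^{tA_1}x|_H\leq c\,|x|_\a$, this gives $|(e^{(t+h)A_1}-e^{tA_1})x|_H\leq c\,h^{\a\wedge 1}\,|x|_\a$. This is the only step in which the stronger norm of the initial datum appears, and it produces the factor $|x|_\a^p$ on the right-hand side of \eqref{sola30}.

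For the deterministic convolution I would split the difference into the near-diagonal part $\int_t^{t+h}e^{(t+h-s)A_1}B_1(u_\e(s),v_\e(s))\,ds$ and the off-diagonal part $\int_0^t(e^{hA_1}-I)e^{(t-s)A_1}B_1(u_\e(s),v_\e(s))\,ds$. Using the boundedness of $e^{\sigma A_1}$ on $H$, the linear growth of $B_1$, and Hölder's inequality in time, the near-diagonal part is bounded by $c\,h^{p-1}\int_t^{t+h}(1+\E\,|u_\e(s)|_H^p+\E\,|v_\e(s)|_H^p)\,ds$, which by \eqref{unifue} and \eqref{unifve} is at most $c\,h^{p-1}(1+|x|_H^p+|y|_H^p)$. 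For the off-diagonal part I would insert $(-A_1)^{\pm\gamma}$ and combine $\|(e^{hA_1}-I)(-A_1)^{-\gamma}\|\leq c\,h^{\gamma}$ with $\|(-A_1)^{\gamma}e^{\sigma A_1}\|\leq c\,\sigma^{-\gamma}$; Hölder's inequality with any $\gamma<(p-1)/p$ (so that $\sigma^{-\gamma q}$ is integrable, $q=p/(p-1)$) separates the kernel and leaves $\int_0^t(1+\E\,|u_\e|_H^p+\E\,|v_\e|_H^p)\,ds$, giving $c\,h^{\gamma p}(1+|x|_H^p+|y|_H^p)$. In both parts it is the time-integrated bound \eqref{unifve} on the fast component that keeps the dependence on $y$ at the power $p$.

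The stochastic convolution $\Gamma_{1,\e}(t+h)-\Gamma_{1,\e}(t)$ I would split in the same way, into $\int_t^{t+h}e^{(t+h-s)A_1}G_{1,\e}(s)\,dw^{Q_1}(s)$ and $\int_0^t(e^{hA_1}-I)e^{(t-s)A_1}G_{1,\e}(s)\,dw^{Q_1}(s)$, estimating each by the Burkholder--Davis--Gundy inequality followed by the Hilbert--Schmidt bounds for $e^{\sigma A_1}G_{1,\e}(s)Q_1$ and $(-A_1)^\gamma e^{\sigma A_1}G_{1,\e}(s)Q_1$ established in the proof of Lemma \ref{lem1} (the analogues for $A_1$ of \eqref{sola20}). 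For the near-diagonal piece this produces $\E\,\le(\int_t^{t+h}(t+h-s)^{-\frac{\beta_1(\rho_1-2)}{\rho_1}}\|G_{1,\e}(s)\|_{\mathcal{L}(L^\infty(D),H)}^2\,ds\r)^{p/2}$; applying Jensen's inequality to the finite measure $(t+h-s)^{-\frac{\beta_1(\rho_1-2)}{\rho_1}}\,ds$ on $[t,t+h]$ to pull the power $p/2$ inside, then \eqref{unifue}--\eqref{unifve} to bound $\E\,\|G_{1,\e}(s)\|^p\leq c(1+\E\,|u_\e|_H^p+\E\,|v_\e|_H^p)$, gives a factor $h^{\frac12\le(1-\frac{\beta_1(\rho_1-2)}{\rho_1}\r)p}$. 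The off-diagonal piece gains an extra $h^{\gamma p}$ from $\|(e^{hA_1}-I)(-A_1)^{-\gamma}\|\leq c\,h^{\gamma}$, provided $\gamma$ is small enough that $\frac{\beta_1(\rho_1-2)+\gamma(\rho_1+2)}{\rho_1}<1$, exactly the condition imposed in Lemma \ref{lem1}. Collecting the exponents and choosing $\beta(\a)$ as the minimum of $\a\wedge 1$, of $\frac12\le(1-\frac{\beta_1(\rho_1-2)}{\rho_1}\r)$, and of the (positive, fixed) drift and off-diagonal exponents yields \eqref{sola30}; this $\beta(\a)$ is independent of $p$ because each contribution is of the form $h^{(\text{const})p}$ with the constant free of $p$, the near-diagonal drift exponent $1-1/p$ being $\geq\frac12$ for every $p\geq 2$.

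The main obstacle I anticipate is the interaction, in the two stochastic terms, between the singular time kernel coming from the Hilbert--Schmidt norm and the fact that for $p>2$ only the integrated bound \eqref{unifve}, and not a pointwise-in-$t$ bound, is available for $v_\e$. The clean way around it is to apply, after Jensen has removed the outer power $p/2$, a further Hölder inequality in $s$ pairing the integrable singularity $(t+h-s)^{-\frac{\beta_1(\rho_1-2)}{\rho_1}r}$ with $\int_0^T(\E\,|v_\e(s)|_H^p)^{r'}\,ds$ for $r>1$ close to $1$; a last application of Jensen, $(\E\,|v_\e|_H^p)^{r'}\leq\E\,|v_\e|_H^{pr'}$, together with \eqref{unifve} used at exponent $pr'$, brings the $v_\e$ contribution back to $c(1+|x|_H^p+|y|_H^p)$ while keeping the power of $h$ strictly positive and independent of $p$. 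Estimate \eqref{sola30} then follows by collecting the four contributions.
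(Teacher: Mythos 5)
Your proof is correct, but it takes a genuinely different route from the paper's. The paper restarts the mild equation at time $t$, writing $u_\epsilon(t+h)=e^{hA_1}u_\epsilon(t)+\int_t^{t+h}e^{(t+h-s)A_1}B_1\,ds+\int_t^{t+h}e^{(t+h-s)A_1}G_1\,dw^{Q_1}(s)$, so the increment consists of $(e^{hA_1}-I)u_\epsilon(t)$ plus two near-diagonal terms only; the factor $h^{\alpha p}$ and the norm $|x|_\alpha^p$ then come from pairing $|(e^{hA_1}-I)z|_H\leq c\,h^{\alpha}|z|_{\alpha}$ with the uniform bound \eqref{sola28} of Proposition \ref{prop2}, which is thus an essential input. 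You instead difference the two mild formulas from time $0$, which produces two extra off-diagonal terms that you tame with $\|(e^{hA_1}-I)e^{\sigma A_1}\|_{\mathcal{L}(H)}\leq c\,h^{\gamma}\sigma^{-\gamma}$ and its Hilbert--Schmidt analogue from the proof of Lemma \ref{lem1}; in exchange you never invoke Proposition \ref{prop2} at all --- the $\alpha$-regularity of $x$ enters only through the free term $(e^{hA_1}-I)e^{tA_1}x$, and beyond that only \eqref{unifue}--\eqref{unifve} are used, so your argument shows the increment estimate rests on Proposition \ref{prop1} alone. A second genuine difference is in the stochastic near-diagonal term: the paper separates the singular kernel from $\|G_{1,\epsilon}(s)\|^2_{\mathcal{L}(L^\infty(D),H)}$ by H\"older with exponents $(p/2,\,p/(p-2))$, which forces the restriction $p\geq\bar p$ with $\frac{\beta_1(\rho_1-2)}{\rho_1}\frac{\bar p}{\bar p-2}<1$ (the paper writes these exponents with index $2$, a harmless slip) and a final H\"older descent to cover $2\leq p<\bar p$; your Jensen-with-the-singular-measure argument, followed by the $r$-H\"older pairing and \eqref{unifve} at exponent $pr'$, treats all $p\geq 2$ in one pass, since Jensen needs exactly $p/2\geq 1$. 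One small caveat: the exponent $h^{\frac 12(1-\kappa)p}$, with $\kappa=\frac{\beta_1(\rho_1-2)}{\rho_1}$, that you announce mid-proof would require a pointwise-in-$s$ bound on $\E\,\|G_{1,\epsilon}(s)\|^p$, which the $v_\epsilon$-contribution does not provide; after the repair in your final paragraph the true exponent is $(1-\kappa)(\frac p2-1)+\frac{1-\kappa r}{r}$, whose ratio to $p$ is bounded below by $\frac{1-\kappa r}{2r}>0$ uniformly in $p\geq 2$ once $r\in(1,1/\kappa)$ is fixed, so your conclusion that $\beta(\alpha)$ can be chosen independent of $p$ does stand. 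Net comparison: the paper's decomposition is shorter given that Proposition \ref{prop2} is proved anyway (and both are then fed into Corollary \ref{corollary1}), while yours is more self-contained and avoids the two-stage treatment of the exponent $p$.
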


\begin{proof}
Without any loss of generality we can assume $h\geq 0$. For any
$t,h\geq 0$ we have
\[\begin{array}{l}
\ds{u_\e(t+h)-u_\e(t)=\le(e^{h A_1}-I\r)u_\e(t)}\\
\vs \ds{+\int_t^{t+h}
e^{(t+h-s)A_1}\,B_1(u_\e(s),v_\e(s))\,ds+\int_t^{t+h}e^{(t+h-s)A_1}\,G_1(u_\e(s),v_\e(s))\,dw^{Q_1}(s).}
\end{array}\]
In view of \eqref{sola28}, if we  fix $\a \in\,(0,\bar{\a})$ and
$p\geq 1$ we have
\begin{equation}
\label{uno}
\begin{array}{l}
\ds{\E\,\le|\le(e^{h A_1}-I\r)u_\e(t)\r|^p_H\leq c_p\,h^{
\a p} \,\E\,|u_\e(t)|^p_{\a}\leq c_{T,\a,p}\,h^{\a p}\,\le(1+|x|_\a^p+|y|^p_H\r).}
\end{array}
\end{equation}
In view of \eqref{unifue} and \eqref{unifve}
\begin{equation}
\label{due}
\begin{array}{l}
\ds{\E\le|\int_t^{t+h}
e^{(t+h-s)A_1}\,B_1(u_\e(s),v_\e(s))\,ds\r|_H^p\leq c\,h^{p-1}
\int_t^{t+h} \le(1+\E\,|u_\e(s)|_H^p+\E\,|v_\e(s))|_H^p\r)\,ds}\\
\vs \ds{\leq c_T h^p\le(1+\sup_{s\leq T}\E\,|u_\e(s)|_H^p\r)+c\,
h^{p-1}\int_0^T\E\,|v_\e(s)|_H^p\,ds\leq
c_{T,p}\le(1+|x|_H^p+|y|_H^p\r)h^{p-1}.}
\end{array}
\end{equation}
Finally, for the stochastic term, by using \eqref{sola20}, for any
$t\leq T$ and $p\geq 1$ we have
\[\begin{array}{l}
\ds{\E\,\le|\int_t^{t+h}
e^{(t+h-s)A_1}\,G_1(u_\e(s),v_\e(s))\,dw^{Q_1}(s)\r|_H^p}\\
\vs
\ds{\leq c_p\,\E\le(\int_t^{t+h}\|e^{(t+h-s)A_1}\,G_1(u_\e(s),v_\e(s))Q_1\|_{2}^2\,ds\r)^{\frac p2}}\\
\vs \ds{\leq c_p\, K_1^{\frac p2}\E\le(\int_t^{t+h}
(t+h-s)^{-\frac{\beta_2(\rho_2-2)}{\rho_2}}
\|G_1(u_\e(s),v_\e(s))\|_{\mathcal{L}
(L^\infty(D),H)}^2\,ds\r)^{\frac p2}.}
\end{array}\]
Then, if we take $\bar{p}\geq 1$ such that
\[\frac{\beta_2(\rho_2-2)}{\rho_2}\frac{\bar{p}}{\bar{p}-2}<1,\]
for any $p\geq \bar{p}$ we have
\[\begin{array}{l}
\ds{\E\,\le|\int_t^{t+h}
e^{(t+h-s)A_1}\,G_1(u_\e(s),v_\e(s))\,dw^{Q_1}(s)\r|_H^p}\\
\vs
\ds{\leq c_{T,p}\,h^{\frac{p-2}2-\frac{\beta_2(\rho_2-2)}{\rho_2}\frac p2}\int_0^T\le(1+\E\,|u_\e(s)|_H^p+\E\,|v_\e(s)|_H^p\r)\,ds,}
\end{array}\]
and, thanks to \eqref{unifue} and \eqref{unifve}, we conclude
\begin{equation}
\label{tre} 
\begin{array}{l}
\ds{\E\,\le|\int_t^{t+h} e^{(t+h-s)A_1}\,G_1(u_\e(s),v_\e(s))\,dw^{Q_1}(s)\r|_H^p}\\
\vs
\ds{\leq
c_{T,p}\le(1+|x|_H^p+|y|_H^p\r)h^{\le(1-\frac 2{\bar{p}}-\frac{\beta_2(\rho_2-2)}{\rho_2}\r)\frac
p2}.}
\end{array}
\end{equation}
Therefore, collecting together \eqref{uno}, \eqref{due} and
\eqref{tre}, we obtain
\[\begin{array}{l}
\ds{\E\,|u_\e(t+h)-u_\e(t)|_H^p\leq
c_{T,\a,p}\,h^{\a p}\le(1+|x|^p_{\a}+|y|_H^p\r)}\\
\vs \ds{+c_{T,p}\le( h^{\le(1-\frac 2{\bar{p}}-\frac{\beta_2(\rho_2-2)}{\rho_2}\r)\frac
p2}+h^{p-1}\r)\le(1+|x|_H^p+|y|_H^p\r),}
\end{array}\] and, as we are assuming $|h|\leq 1$, \eqref{sola30} follows for any $p\geq \bar{p}$ by taking
\[\beta(\a):=\min\,\le\{\a, \frac 12\le(1-\frac 2{\bar{p}}-\frac{\beta_2(\rho_2-2)}{\rho_2}\r)\r\}.\]
Estimate \eqref{sola30} for $p<\bar{p}$ follows from the H\"older inequality.

\end{proof}

As a  consequence of Proposition \ref{prop2} and Proposition
\ref{prop3} we have the following fact.
\begin{Corollary}
\label{corollary1} Under Hypotheses \ref{H1} and \ref{H2}, for any
$T>0$, $x \in\,D((-A_1)^\a)$, with $\a>0$, and $y \in\,H$ the
family $\{\mathcal{L}(u_\e)\}_{\e \in\,(0,1]}$ is tight in
$C([0,T];H)$.
\end{Corollary}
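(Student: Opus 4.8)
The plan is to derive tightness from a standard compactness criterion in $C([0,T];H)$ of Arzel\`a--Ascoli type, fed by the two uniform moment bounds already established in Proposition \ref{prop2} and Proposition \ref{prop3}. The key structural input is that, since $\a_{1,k}\to +\infty$ under Hypothesis \ref{H1}, the embedding $D((-A_1)^\a)\hookrightarrow H$ is compact for every $\a>0$. Fixing such an $\a \in\,(0,\bar{\a}]$ and a H\"older exponent $\gamma \in\,(0,1)$ to be chosen, I would introduce, for $R,M>0$, the set
\[
K_{R,M}:=\le\{u \in\,C([0,T];H)\ :\ \sup_{t\in[0,T]}|u(t)|_\a\leq R,\ \ \sup_{\substack{s,t\in[0,T]\\ s\neq t}}\frac{|u(t)-u(s)|_H}{|t-s|^\gamma}\leq M\r\}.
\]
By Arzel\`a--Ascoli, $K_{R,M}$ is relatively compact in $C([0,T];H)$: the H\"older bound makes the family equicontinuous in time, while for each fixed $t$ the values lie in the ball of radius $R$ of $D((-A_1)^\a)$, which is relatively compact in $H$ thanks to the compact embedding; being closed, $K_{R,M}$ is in fact compact.

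It then remains to show that for every $\eta>0$ one can choose $R,M$ so that $\Pro(u_\e\notin K_{R,M})<\eta$ \emph{uniformly} in $\e \in\,(0,1]$, which is precisely tightness of $\{\mathcal{L}(u_\e)\}$. The spatial part is immediate: by the Chebyshev inequality and Proposition \ref{prop2},
\[
\Pro\le(\sup_{t\leq T}|u_\e(t)|_\a> R\r)\leq \frac{1}{R^p}\,\E\,\sup_{t\leq T}|u_\e(t)|_\a^p\leq \frac{c_{T,\a,p}}{R^p}\le(1+|x|_\a^p+|y|_H^p\r),
\]
so a large $R$ controls this term uniformly in $\e$. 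For the temporal part I would convert the pointwise-in-time increment bound of Proposition \ref{prop3} into a bound on the H\"older seminorm via the Kolmogorov--Garsia--Rodemich--Rumsey continuity criterion: taking $p$ large enough that $\beta(\a)\,p>1$, the estimate $\E|u_\e(t+h)-u_\e(t)|_H^p\leq c_{T,\a,p}\,|h|^{\beta(\a)p}(1+|x|_\a^p+|y|_H^p)$ yields, for any $\gamma<\beta(\a)-1/p$, a bound
\[
\E\,\sup_{\substack{s,t\in[0,T]\\ s\neq t}}\le(\frac{|u_\e(t)-u_\e(s)|_H}{|t-s|^\gamma}\r)^p\leq c_{T,\a,\gamma,p}\le(1+|x|_\a^p+|y|_H^p\r),
\]
again uniform in $\e$; a further application of Chebyshev then makes the probability that the H\"older seminorm exceeds $M$ small by taking $M$ large.

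The only genuinely delicate point is this last conversion. One must check that the constant produced by the Garsia--Rodemich--Rumsey inequality is universal in the process, so that it enters the final bound only through the right-hand side of Proposition \ref{prop3}, thereby keeping the H\"older-seminorm moment bound uniform in $\e \in\,(0,1]$; and one must verify that the exponents are compatible, i.e.\ that $p$ can be chosen with $\beta(\a)\,p>1$ while still leaving a positive range $\gamma \in\,(0,\beta(\a)-1/p)$. Once both Chebyshev estimates are in force, selecting $R$ and $M$ so large that each contributes less than $\eta/2$ gives $\Pro(u_\e\in K_{R,M})>1-\eta$ for all $\e \in\,(0,1]$, and the compactness of $K_{R,M}$ in $C([0,T];H)$ concludes the proof of tightness.
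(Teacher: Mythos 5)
Your proposal is correct and follows essentially the same route as the paper: there, too, the increment bound \eqref{sola30} of Proposition \ref{prop3} is upgraded via the Garsia--Rodemich--Rumsey theorem to a moment bound on the $C^{\bar{\beta}}([0,T];H)$-norm uniform in $\e \in\,(0,1]$, which is then combined with Proposition \ref{prop2}, a Chebyshev estimate, and Ascoli--Arzel\`a compactness of the sets bounded both in the H\"older norm and in $\sup_{t}|\cdot|_\a$. The only details the paper leaves implicit, which you rightly make explicit, are the compactness of the embedding $D((-A_1)^\a)\hookrightarrow H$ and the exponent bookkeeping $\beta(\a)\,p>1$.
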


\begin{proof}
Let $\a>0$ be fixed and let $x \in\,D((-A_1)^\a)$ and $y \in\,H$.
According to \eqref{sola30}, in view of the Garcia-Rademich-Rumsey
Theorem,  there exists $\bar{\beta}>0$ such that for any $p\geq 1$
\[ \sup_{\e
\in\,(0,1]}\E\,|u_\e|^p_{C^{\bar{\beta}}([0,T];H)}\leq
c_{T,p}\le(1+|x|^p_\a+|y|^p_H\r).
\]
Due to Proposition \ref{prop2}, this implies that for any $\eta>0$
we can find $R_\eta>0$ such that
\[\mathbb{P}\,\le(u_\e \in\,K_{R_\eta}\r)\geq 1-\eta,\ \ \ \ \ \
\e \in\,(0,1],\] where, by the Ascoli-Arzel\`a Theorem,
$K_{R_\eta}$ is the compact subset of $C([0,T];H)$ defined by
\[K_{R_\eta}:=\le\{\,u
\in\,C([0,T];H)\,:\,|u|_{C^{\bar{\beta}}([0,T];H)}+ \sup_{t
\in\,[0,T]}|u(t)|_\a\leq R_\eta\,\r\}.\] This implies that the
family of probability measures $\{\mathcal{L}(u_\e)\}_{\e
\in\,(0,1]}$ is tight in $C([0,T];H)$.
\end{proof}

We conclude this section by noticing that with arguments analogous to those used in the proof of Propositions \ref{prop1}, \ref{prop2} and \ref{prop3}, we can  obtain a priori bounds also for the
conditional second momenta of the $H$-norms of $u_\e$ and $v_\e$.

\begin{Proposition}
Assume Hypotheses \ref{H1} and \ref{H2}. Then, for any $0\leq
s<t\leq T$ and any $\e \in\,(0,1]$ the following facts holds.
\begin{enumerate}
\item There exists $\bar{\a}>0$ such that for any $x
\in\,D((-A_1)^\a)$, with $\a \in\,[0,\bar{\a}]$, and $y \in\,H$
\[
\E\le(|u_\e(t)|^2_{\a}\le|\mathcal{F}_s\r.\r)\leq
c_{T,\a}\le(1+|u_\e(s)|^2_\a+|v_\e(s)|^2_H\r),\ \ \ \
\mathbb{P}-\text{{\em a.s.},}
\]
for some constant $c_{T,\a}$ independent of $\e$.

\item For any $x,y \in\,H$
\begin{equation}
\label{sola29con} \E\le(|v_\e(t)|^2_{H}\le|\mathcal{F}_s\r.\r)\leq
c_{T}\le(1+|u_\e(s)|^2_H+|v_\e(s)|^2_H\r),\ \ \ \
\mathbb{P}-\text{{\em a.s.},}
\end{equation}
for some constant $c_{T}$ independent of $\e$.

\item For any $\a>0$
there exists $\beta(\a)>0$ such that for any $x
\in\,D((-A_1)^{\a})$ and $y \in\,H$
\[
\E\le(|u_\e(t)-u_\e(s)|_H^2\le|\mathcal{F}_s\r.\r)\leq c_{T,\a}
(t-s)^{2\, \beta(\a)}\le(|u_\e(s)|_{\a}^2+|v_\e(s)|^2_H+1\r),\ \ \
\ \mathbb{P}-\text{{\em a.s.}},
\]
for some constant $c_{T,\a}$ independent of $\e$.

\end{enumerate}

\end{Proposition}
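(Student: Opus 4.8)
The plan is to deduce each of the three conditional estimates from its unconditional counterpart in Propositions \ref{prop1}, \ref{prop2} and \ref{prop3}, by conditioning on $\mathcal{F}_s$ and treating $u_\e(s)$ and $v_\e(s)$ as frozen initial data. Since the coefficients of system \eqref{astratto} are time independent and Lipschitz continuous in the relevant norms, and the increments of $w^{Q_1}$ and $w^{Q_2}$ after time $s$ are independent of $\mathcal{F}_s$, the pair $(u_\e,v_\e)$ is a Markov process in $H\times H$. Writing the mild formulation restarted at $s$,
\[u_\e(t)=e^{(t-s)A_1}u_\e(s)+\int_s^t e^{(t-r)A_1}B_1(u_\e(r),v_\e(r))\,dr+\int_s^t e^{(t-r)A_1}G_1(u_\e(r),v_\e(r))\,dw^{Q_1}(r),\]
and similarly for $v_\e$, one sees that conditionally on $\mathcal{F}_s$ the law of $(u_\e(t),v_\e(t))$ coincides with the law, evaluated at the $\mathcal{F}_s$-measurable datum $(u_\e(s),v_\e(s))$, of the solution run over the time interval $[0,t-s]$. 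Consequently $\E(\,\cdot\,|\mathcal{F}_s)$ of a functional of $(u_\e(t),v_\e(t))$ is obtained by freezing the initial datum in the corresponding unconditional expectation.

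With this reduction the three assertions follow by inserting $x=u_\e(s)$ and $y=v_\e(s)$, with $p=2$ and $t-s\leq T$, into \eqref{sola28}, \eqref{unifvebis} and \eqref{sola30} respectively. For the first bound one uses that the hypothesis $x\in D((-A_1)^\a)$ forces $u_\e(s)\in D((-A_1)^\a)$ almost surely, by Proposition \ref{prop2}, so that $|u_\e(s)|_\a$ is finite $\mathbb{P}$-a.s. and may legitimately appear on the right-hand side; the second bound requires no regularity of the datum; and for the third one applies the increment estimate of Proposition \ref{prop3} at the initial time, observing that conditionally on $\mathcal{F}_s$ the quantity $|u_\e(t)-u_\e(s)|_H$ is the increment over $[0,t-s]$ of the slow component issued from $(u_\e(s),v_\e(s))$, whose mild decomposition $(e^{(t-s)A_1}-I)u_\e(s)+\cdots$ is estimated exactly as in \eqref{uno}, \eqref{due} and \eqref{tre}, the first term now being handled deterministically by the smoothing bound $|(e^{(t-s)A_1}-I)u_\e(s)|_H\leq c\,(t-s)^\a|u_\e(s)|_\a$. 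Equivalently, and more in the spirit of the quoted propositions, one may redo the estimates directly under $\E(\,\cdot\,|\mathcal{F}_s)$: the pathwise energy inequalities for $\La_{1,\e}$ and $\La_{2,\e}$ hold $\mathbb{P}$-a.s. and survive conditioning by monotonicity, while the stochastic convolutions restarted at $s$ are controlled through the conditional Burkholder-Davis-Gundy inequality together with \eqref{sola20}, reproducing the analogues of \eqref{sola63} and \eqref{sola35} with $\E(\,\cdot\,|\mathcal{F}_s)$ in place of $\E$.

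The step I expect to be most delicate is the closure of the coupled conditional moments in the fast-variable bound \eqref{sola29con}: exactly as in the proof of Proposition \ref{prop1}, the conditional estimate for $\E(|v_\e|_H^2|\mathcal{F}_s)$ feeds in $\E(|u_\e|_H^2|\mathcal{F}_s)$ and conversely, and one must again exploit $\gamma<1$ to absorb the term $\E(|v_\e|_H^{2\gamma}|\mathcal{F}_s)$ into the left-hand side before running a conditional Gronwall argument. This absorption is legitimate precisely because Proposition \ref{prop1} already guarantees that the unconditional moments, hence the conditional ones, are finite $\mathbb{P}$-a.s., so that the rearrangement is not merely formal.
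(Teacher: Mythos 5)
Your proposal is correct and matches the paper's own treatment: the paper gives no detailed proof of this proposition, stating only that the conditional bounds follow ``with arguments analogous to those used in the proof of Propositions \ref{prop1}, \ref{prop2} and \ref{prop3},'' which is precisely what your second route carries out (re-running the pathwise energy inequalities and the stochastic-convolution estimates under $\E(\,\cdot\,|\mathcal{F}_s)$ via the conditional Burkholder--Davis--Gundy inequality and \eqref{sola20}, absorbing the $\gamma<1$ term exactly as in the proof of \eqref{unifvebis}). Your first route --- the Markov restart at time $s$ with the frozen data $(u_\e(s),v_\e(s))$ substituted into the unconditional bounds \eqref{sola28}, \eqref{unifvebis} and \eqref{sola30}, together with the correct observations that $u_\e(s)\in D((-A_1)^\a)$ $\mathbb{P}$-a.s.\ by Proposition \ref{prop2} and that the increment estimate at the restarted initial time reduces to the deterministic bound $|(e^{(t-s)A_1}-I)u_\e(s)|_H\leq c\,(t-s)^\a |u_\e(s)|_\a$ --- is an equivalent, clean formalization of the same argument, with the constants independent of $\e$ inherited from the unconditional statements.
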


\section{The key lemma}
\label{sec5}

We introduce the Kolmogorov operator associated with the
slow motion equation, with frozen fast component, by setting for
any $\varphi \in\,\mathcal{R}(H)$ and $x,y \in\,H$
\begin{equation}
\label{sl}\begin{array}{l}
\ds{\mathcal{L}_{sl}\,\varphi(x,y)}\\
\vs \ds{=\frac 12
\text{Tr}\,\le[Q_1G_1(x,y)D^2\varphi(x)G_1(x,y)Q_1\r]+\le<A_1
D\varphi(x),x\r>_H+\le<D\varphi(x),B_1(x,y)\r>_H}\\
\vs \ds{=\frac 12\sum_{i,j=1}^kD^2_{ij} f(\le<x,P_N a_1\r>_H,\ldots,\le<x,P_N
a_k\r>_H)\le<G_1(x,y) Q_{1,N} a_i,G_1(x,y) Q_{1,N} a_j\r>_H}\\
\vs \ds{+\sum_{i=1}^kD_i f(\le<x,P_N
a_1\r>_H,\ldots,\le<x,P_N a_k\r>_H)\le(\le<x,A_{1,N}
a_i\r>_H+\le<B_1(x,y),P_N a_i\r>_H\r).}
\end{array}\end{equation}

\begin{Lemma}
\label{lemma51}
Assume Hypotheses \ref{H1}-\ref{H5} and fix $x
\in\,D((-A_1)^\a)$, with $\a>0$, and $y \in\,H$. Then, for any
$\varphi \in\,\mathcal{R}(H)$ and  $0\leq t_1<t_2\leq T$
\begin{equation}
\label{sw3} \lim_{\e\to
0}\E\le|\int_{t_1}^{t_2}\E\le(\mathcal{L}_{sl}\,\varphi(u_{\e}(r),v_{\e}(r))-
\mathcal{L}_{\text{av}}\,\varphi(u_{\e}(r))\le|\mathcal{F}_{t_1}\r.\r)\,dr\r|=0.\end{equation}
\end{Lemma}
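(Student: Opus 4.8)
The plan is to reduce the problem to the averaging estimates in Hypotheses~\ref{H4} and \ref{H5} by means of a time-discretization argument in the spirit of Khasminskii. The key observation is that the difference $\mathcal{L}_{sl}\,\varphi(x,y)-\mathcal{L}_{av}\,\varphi(x)$ involves only the terms in which the fast variable $y$ appears, namely the first-order term $\le<D\varphi(x),B_1(x,y)-\bar{B}(x)\r>_H$ and the second-order trace term comparing $\text{Tr}\,[Q_1 G_1(x,y)D^2\varphi(x)G_1(x,y)Q_1]$ with $\text{Tr}\,[(\bar{G}(x)Q_1)^\star D^2\varphi(x)\bar{G}(x)Q_1]$; the drift $\le<A_1 D\varphi(x),x\r>_H$ cancels identically. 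Since $\varphi \in\,\mathcal{R}(H)$ is cylindrical, both terms are finite sums of expressions of the form $D_if(\cdots)\le<B_1(x,y)-\bar{B}(x),P_N a_i\r>_H$ and $D^2_{ij}f(\cdots)\le[\le<G_1(x,y)Q_{1,N}a_i,G_1(x,y)Q_{1,N}a_j\r>_H-\le<\bar{G}(x)Q_1 P_N a_i,\bar{G}(x)Q_1 P_N a_j\r>_H\r]$, with the vectors $Q_{1,N}a_i \in\,L^\infty(D)$. This is exactly the structure for which \eqref{sola7} and \eqref{sola8} are designed.

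First I would partition $[t_1,t_2]$ into subintervals of length $\Delta=\Delta(\e)$ with $\e\ll\Delta\ll 1$, say $\Delta=\sqrt{\e}$, and write the integral as a sum over the partition points $\{t_1=s_0<s_1<\cdots<s_M=t_2\}$. On each subinterval $[s_j,s_{j+1}]$ I would freeze the slow variable at $u_\e(s_j)$ and approximate the fast variable $v_\e(r)$ by the solution $\hat v(r)$ of the fast equation with frozen slow component $u_\e(s_j)$ and initial datum $v_\e(s_j)$ at time $s_j$, run on the stretched time scale. The error committed in replacing $u_\e(r)$ by $u_\e(s_j)$ is controlled by the Hölder continuity estimate \eqref{sola30} (and its conditional version), which gives an increment of order $\Delta^{\beta(\a)}$; the error in replacing $v_\e$ by the frozen-coefficient process is controlled using the Lipschitz dependence on $x$ together with the a-priori bounds \eqref{unifue}, \eqref{unifve}. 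After these replacements, on each subinterval I would invoke the averaging estimates \eqref{sola7} and \eqref{sola8}, applied with $T$ replaced by the rescaled length $\Delta/\e$, conditionally on $\mathcal{F}_{s_j}$; the factor $\a(\Delta/\e)$ tends to zero because $\Delta/\e\to\infty$, while the polynomial weights $(1+|u_\e(s_j)|_H+|v_\e(s_j)|_H)$ are handled by the conditional moment bounds \eqref{sola29con}.

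The main obstacle is the bookkeeping of the two competing error sources and the choice of $\Delta=\Delta(\e)$ balancing them. On one hand, the frozen-coefficient replacement and the Hölder increment produce errors that grow with the number $M\sim(t_2-t_1)/\Delta$ of subintervals times a per-interval error of order $\Delta^{1+\beta(\a)}$ (after integrating the increment over a subinterval), giving a total of order $\Delta^{\beta(\a)}\to 0$. On the other hand, the averaging error per subinterval is of order $\Delta\,\a(\Delta/\e)$, summing to $(t_2-t_1)\,\a(\Delta/\e)\to 0$ provided $\Delta/\e\to\infty$. Both requirements are met by any $\Delta$ with $\e\ll\Delta\ll 1$, and $\Delta=\sqrt{\e}$ works. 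The delicate point, compared with the finite-dimensional setting of \cite{khas}, is that the second-order term requires the $L^\infty(D)$-valued test vectors $Q_{1,N}a_i$ and the corresponding $L^\infty$-norms on the right-hand side of \eqref{sola8}; one must verify throughout that these norms stay bounded (which they do, since $N$ and $k$ are fixed by $\varphi$) and that the Hilbert–Schmidt structure is respected when passing the expectation inside. Once the two families of errors are shown to vanish, the triangle inequality and the decomposition over the partition yield \eqref{sw3}.
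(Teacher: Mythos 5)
Your overall strategy is the same as the paper's: a Khasminskii partition of the time interval, freezing of the slow variable at the left endpoints, replacement of $v_\e$ by an auxiliary fast process with frozen slow component, and then an application of Hypotheses \ref{H4} and \ref{H5} on the rescaled time interval of length $\Delta/\e$, with the conditional moment bounds controlling the polynomial weights. The decomposition you describe (cancellation of $\le<A_1 D\varphi(x),x\r>_H$, reduction to the $B_1$-term and the $G_1$-trace term with test vectors $Q_{1,N}a_i \in\,L^\infty(D)$) is exactly the paper's splitting into the terms $I^\e_{ij}$ and $J^\e_i$.

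However, there is a genuine gap in your error analysis for the replacement of $v_\e$ by the frozen-coefficient process, and it is precisely the point where the infinite-dimensional setting departs from \cite{khas}. You claim this error is of order $\Delta^{\beta(\a)}$ per unit time and that \emph{any} mesh with $\e\ll\Delta\ll 1$, e.g.\ $\Delta=\sqrt{\e}$, works. Under Hypotheses \ref{H1}--\ref{H5} alone (no strong dissipativity of the type \eqref{condition} or \eqref{sola3} is assumed in this lemma), the only available estimate for $\E\,|\hat{v}_\e(t)-v_\e(t)|_H^2$ comes from a Gronwall-type inequality whose drift and stochastic-convolution terms carry coefficients of order $1/\e$ and a singular kernel $(t-s)^{-\beta_2(\rho_2-2)/\rho_2}$; the singular kernel forces an iterated Gronwall lemma (Lemma \ref{sola55} in the paper, applied $\bar{n}$ times), and the outcome over an interval of length $\d_\e$ is a bound of the form
\begin{equation*}
\E\,|\hat{v}_\e(t)-v_\e(t)|_H^2 \leq c_T\, \d_\e^{2\beta(\a)}\le(1+|x|_\a^2+|y|_H^2\r)\le(1+\zeta_\e^{2^{\bar{n}+1}}\r)\exp\le(c\,\zeta_\e^{2^{\bar{n}+1}}\r),\qquad \zeta_\e:=\d_\e/\e,
\end{equation*}
i.e.\ the freezing error is \emph{not} $O(\d_\e^{\beta(\a)})$ uniformly: it carries a factor growing exponentially in a power of the rescaled time $\zeta_\e$. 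Consequently the mesh cannot be chosen freely: one needs $\zeta_\e$ to diverge (so that $\a(\zeta_\e)\to 0$ in the averaging step) but only logarithmically in $1/\e$, and with a tuned exponent, namely $\zeta_\e=(\log \e^{-\kappa_2})^{\kappa_1}$ with $\kappa_1=2^{-(\bar{n}+1)}$ and $\kappa_2<2\beta(\a)c^{-1}$, so that $\d_\e^{2\beta(\a)}\exp(c\,\zeta_\e^{2^{\bar{n}+1}})\to 0$. With your choice $\Delta=\sqrt{\e}$ one has $\zeta_\e=\e^{-1/2}$ and the exponential factor destroys the polynomial decay, so the replacement step fails. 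A secondary, more minor point: to invoke \eqref{sola7} and \eqref{sola8}, which are stated for deterministic $x,y$, at the random arguments $(u_\e(k\d_\e),v_\e(k\d_\e))$, you need the identification in law of the frozen process on $[k\d_\e,(k+1)\d_\e]$ with $\tilde{v}^{u_\e(k\d_\e),v_\e(k\d_\e)}(\cdot/\e)$ driven by a noise independent of $\mathcal{F}_{k\d_\e}$, followed by the Markov property; your "conditionally on $\mathcal{F}_{s_j}$" gestures at this but does not supply it.
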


\begin{proof}
By using the Khasminskii idea introduced in \cite{khas},  we  realize a partition of  $[0,T]$ into
intervals of size $\d_\e>0$, to be chosen later on, and for
each $\e>0$ we denote by $\hat{v}_\e(t)$ the solution of the
problem
\begin{equation}
\label{hatve}
\begin{array}{l} \ds{\hat{v}_\e(t)=e^{(t-k
\d_\e)\frac{A_2}\e }v_\e(k \d_\e)+\frac 1\e\int_{k \d_\e}^t
e^{(t-s)\frac
{A_2}\e}B_{2}(u_\e(k \d_\e),\hat{v}_\e(s))\,ds}\\
\vs \ds{+\frac 1{\sqrt{\e}}\int_{k \d_\e}^t e^{(t-s)\frac {A_2}\e}
G_2(u_\e(k \d_\e),\hat{v}_\e(s))\,dw^{Q_2}(s),\ \ \ \ t \in\,[k
\d_\e,(k+1)\d_\e),}
\end{array}
\end{equation}
for $k=0,\ldots,[T/\d_\e]$. In what follows we shall set $\zeta_\e:=\d_\e/\e$.

\medskip

{\em Step 1.} 
Now, we prove that there exist $\kappa_1, \kappa_2>0$  such that if we
set
\[\zeta_\e=\le(\log \frac 1{\e^{\,\kappa_2}}\r)^{\kappa_1},\]
then
\begin{equation}
\label{sola52} \lim_{\e\to 0} \sup_{t
\in\,[0,T]}\,\E\,|\hat{v}_\e(t)-v_\e(t)|^2_H=0.
\end{equation}

If we fix $k=0,\ldots,[T/\d_\e]$ and take $t \in\,[k
\d_\e,(k+1)\d_\e)$, we have
\[\begin{array}{l} \ds{v_\e(t)=e^{(t-k
\d_\e)\frac{A_2}\e }v_\e(k \d_\e)+\frac 1\e\int_{k \d_\e}^t
e^{(t-s)\frac
{A_2}\e}B_{2}(u_\e(s),v_\e(s))\,ds}\\
\vs \ds{+\frac 1{\sqrt{\e}}\int_{k \d_\e}^t e^{(t-s)\frac {A_2}\e}
G_2(u_\e(s),v_\e(s))\,dw^{Q_2},}
\end{array}\]
so that
\[\begin{array}{l} \ds{\E\,\le|\hat{v}_\e(t)-v_\e(t)\r|_H^2\leq \frac {2\, \d_\e}{\e^2}\int_{k \d_\e}^t
\E\,\le|B_{2}(u_\e(k \d_\e),\hat{v}_\e(s))-B_{2}(u_\e(s),v_\e(s))\r|_H^2\,ds}\\
\vs \ds{+\frac 2{\e}\,\E\,\le|\int_{k \d_\e}^t e^{(t-s)\frac
{A_2}\e} \le[G_2(u_\e(k
\d_\e),\hat{v}_\e(s))-G_2(u_\e(s),v_\e(s))\r]\,dw^{Q_2}\r|_H^2.}
\end{array}\]
For the first term,  we have
\begin{equation}
\label{sola53}
\begin{array}{l} \ds{\frac{\d_\e}{\e^2}\int_{k \d_\e}^t
\E\,\le|B_{2}(u_\e(k
\d_\e),\hat{v}_\e(s))-B_{2}(u_\e(s),v_\e(s))\r|_H^2\,ds}\\
\vs \ds{\leq \frac{c}{\e}\int_{k \d_\e}^t\zeta_\e\,\le(\E\,|u_\e(k
\d_\e)-u_\e(s)|_H^2+\E\,|\hat{v}_\e(s)-v_\e(s)|_H^2\r)\,ds.}
\end{array}\end{equation}
For the second term, by proceeding as in the proof of Proposition
\ref{prop1} we obtain
\begin{equation}
\label{sola54}
\begin{array}{l} \ds{\E\,\le|\int_{k \d_\e}^t
e^{(t-s)\frac {A_2}\e} \le[G_2(u_\e(k
\d_\e),\hat{v}_\e(s))-G_2(u_\e(s),v_\e(s))\r]\,dw^{Q_2}\r|_H^2}\\
\vs \ds{\leq c\int_{k \d_\e}^t
\le(\frac{t-s}\e\r)^{-\frac{\beta_2(\rho_2-2)}{\rho_2}}e^{-\frac{\la(\rho_2+2)}{\e
\rho_2} (t-s)}\le(\E\,|u_\e(k
\d_\e)-u_\e(s)|_H^2+\E\,|\hat{v}_\e(s)-v_\e(s)|_H^2\r)\,ds.}
\end{array}\end{equation}
In view of \eqref{sola30}, we have
\[\begin{array}{l}
\ds{\frac 1\e\int_{k \d_\e}^t\le[
\le(\frac{t-s}\e\r)^{-\frac{\beta_2(\rho_2-2)}{\rho_2}}e^{-\frac{\la(\rho_2+2)}{\e
\rho_2} (t-s)}+\zeta_\e\r]\E\,|u_\e(k \d_\e)-u_\e(s)|_H^2\,ds}\\
\vs \ds{\leq \frac{c_T}{\e}\int_{k \d_\e}^t\le[
\le(\frac{t-s}\e\r)^{-\frac{\beta_2(\rho_2-2)}{\rho_2}}e^{-\frac{\la(\rho_2+2)}{\e
\rho_2} (t-s)}+\zeta_\e\r](s-k
\d_\e)^{2\beta(\a)}\,ds\le(1+|x|_\a^2+|y|_H^2\r)}\\
\vs \ds{\leq c_T\,
\d_\e^{2\beta(\a)}\le(1+\zeta_\e^2\r)\le(1+|x|_\a^2+|y|_H^2\r).}
\end{array}\]
Moreover \[\begin{array}{l} \ds{\frac 1\e\int_{k \d_\e}^t\le[
\le(\frac{t-s}\e\r)^{-\frac{\beta_2(\rho_2-2)}{\rho_2}}e^{-\frac{\la(\rho_2+2)}{\e
\rho_2}
(t-s)}+\zeta_\e\r]\E\,|\hat{v}_\e(s)-v_\e(s)|_H^2\,ds}\\
\vs \ds{\leq \frac c\e
\le(\e^{\frac{\beta_2(\rho_2-2)}{\rho_2}}+\zeta_\e\,\d_\e^{\frac{\beta_2(\rho_2-2)}{\rho_2}}\r)\int_{k
\d_\e}^t(t-s)^{-\frac{\beta_2(\rho_2-2)}{\rho_2}}
\E\,|\hat{v}_\e(s)-v_\e(s)|_H^2\,ds.}
\end{array}\] Then, thanks to \eqref{sola53} and
\eqref{sola54}, we obtain \begin{equation}
\label{sola56}
\begin{array}{l}
\ds{\E\,\le|\hat{v}_\e(t)-v_\e(t)\r|_H^2\leq c_T\,
\d_\e^{2\beta(\a)}\le(1+\zeta_\e^2\r)\le(1+|x|_\a^2+|y|_H^2\r)}\\
\vs \ds{+c\,
\e^{\frac{\beta_2(\rho_2-2)}{\rho_2}-1}\le(1+\zeta_\e^{1+\frac{\beta_2(\rho_2-2)}{\rho_2}}\r)
\int_{k \d_\e}^t(t-s)^{-\frac{\beta_2(\rho_2-2)}{\rho_2}}
\E\,|\hat{v}_\e(s)-v_\e(s)|_H^2\,ds.}
\end{array}\end{equation}

Now, we recall the following simple fact (for a proof see e.g.
\cite{fontbona}).
\begin{Lemma} \label{sola55}
If $M,L,\theta$ are positive constant and $g$ is a nonnegative
function such that
\[g(t)\leq M+L\int_{t_0}^t (t-s)^{\theta-1} g(s)\,ds,\ \ \ \ \ t\geq t_0,\]
then
\[g(t)\leq M+\frac{M
L}{\theta}(t-t_0)^\theta+L^2\int_0^1 r^{\theta-1}(1-r)^{\theta-1}\,dr\,\int_{t_0}^t(t-s)^{2\theta-1}g(s)\,ds,\
\ \ \ t\geq t_0.\]
\end{Lemma}
Notice that if we iterate the lemma above $n$-times, we find
\[g(t)\leq c_{1,n,\theta}\,
M\le(1+L^{2^n-1}(t-t_0)^{2^n-1}\r)+c_{2,n,\theta}\,
L^{2^n}\int_{t_0}^t (t-s)^{2^n \theta-1}g(s)\,ds,\] for some
positive  constants $c_{1,n,\theta}$ and $c_{2,n,\theta}$.

If we apply $\bar{n}$-times Lemma \ref{sola55} to \eqref{sola56},
with $\bar{n} \in\,\nat$ such that
\[2^{\bar{n}} \theta:=2^{\bar{n}}\le(1-\frac{\beta_2(\rho_2-2)}{\rho_2}\r)>1,\]
we get
\[\begin{array}{l}
\ds{\E\,\le|\hat{v}_\e(t)-v_\e(t)\r|_H^2}\\
\vs \ds{\leq c_T
\d_\e^{2\beta(\a)}\le(1+\zeta_\e^2\r)\le(1+|x|_\a^2+|y|_H^2\r)
\le(1+
\e^{-(2^{\bar{n}}-1)\theta}\le(1+\zeta_\e^{(2^{\bar{n}}-1)\le(2-\theta\r)}\r)\d_\e^{(2^{\bar{n}}-1)\theta}\r)}\\
\vs \ds{+c
\,\e^{-2^{\bar{n}}\theta}\le(1+\zeta_\e^{2^{\bar{n}}\le(2-\theta\r)}\r)\int_{k
\d_\e}^t(t-s)^{2^{\bar{n}}\theta-1}\E\,\le|\hat{v}_\e(s)-v_\e(s)\r|_H^2\,ds}\\
\vs \ds{\leq c_T \d_\e^{2\beta(\a)}\le(1+|x|_\a^2+|y|_H^2\r)
\le(1+ \zeta_\e^{2^{\bar{n}+1}}\r)+\frac c{\d_\e}
\,\le(1+\zeta_\e^{2^{\bar{n}+1}}\r)\int_{k
\d_\e}^t\E\,\le|\hat{v}_\e(s)-v_\e(s)\r|_H^2\,ds.}
\end{array}\]
From the Gronwall Lemma this yields
\[\E\,\le|\hat{v}_\e(t)-v_\e(t)\r|_H^2\leq
c_T \d_\e^{2\beta(\a)}\le(1+|x|_\a^2+|y|_H^2\r) \le(1+
\zeta_\e^{2^{\bar{n}+1}}\r)\exp\le(c\,\zeta_\e^{2^{\bar{n}+1}}\r).\]
Now, since we have
\[\exp\le(c\,\zeta_\e^{2^{\bar{n}+1}}\r)=\exp\le(c\le(\log \frac
1{\e^{\kappa_2}}\r)^{\kappa_1\,2^{\bar{n}+1}}\r),\] if we take
$\kappa_1:=2^{-(\bar{n}+1)}$ and $\kappa_2<2\beta(\a) c^{-1}$, we
conclude that \eqref{sola52} holds.

Moreover, as for $t \in\,[k\d_\e,(k+1)\d_\e]$ the process
$\hat{v}_\e(t)$ is the mild solution of the problem
\[dv(t)=\frac 1\e\,\le[A_2
v(t)+B_2(u_\e(k\d_\e),v(t))\r]\,dt+\frac
1{\sqrt{\e}}\,G_2(u_\e(k\d_\e),v(t))\,dw^{Q_2}(t),\ \ \ \
v(k\d_\e)=v_\e(k\d_\e),\]  with the same arguments as those used to prove
\eqref{unifvebis} and \eqref{sola29con} we obtain
\begin{equation}
\label{sw44} \sup_{t \in\,[0,T]}\E\,|\hat{v}_\e(t)|_H^2\leq
c_T\le(1+|x|_H^2+|y|_H^2\r)
\end{equation}
and, for any $t \in\,[k\d_\e,(k+1)\d_\e]$,
\[
\E\le(|\hat{v}_\e(t)|_H^2\le|\mathcal{F}_{k\d_\e}\r.\r)\leq
c\le(1+|u_\e(k\d_\e)|_H^2+|v_\e(k\d_\e)|_H^2\r),\ \
\mathbb{P}-\text{a.s.}
\]

\medskip

{\em Step 2.}
Now, we fix $\varphi \in\,\mathcal{R}(H)$. We can assume that
\[\varphi(x)=f(\le<x,P_N a_1\r>_H,\ldots,\le<x,P_N a_k\r>_H),\]
 for
some $f \in\,C^\infty_c(\mathbb{R}^k)$ and $k,N \in\,\nat$.
According to \eqref{av} and \eqref{sl}, we have
\[\begin{array}{l}
\ds{\mathcal{L}_{sl}\,\varphi(u_{\e}(r),v_{\e}(r))-
\mathcal{L}_{av}\,\varphi(u_{\e}(r))=\frac 12 \sum_{i,j=1}^k
I^{\e}_{ij}(r)+\sum_{i=1}^k J^{\e}_i(r),}
\end{array}\] where
\[\begin{array}{l}
\ds{I^{\e}_{ij}:=D^2_{ij} f(\le<u_{\e},P_N a_1\r>_H,\ldots,\le<u_{\e},P_N
a_k\r>_H)}\\
\vs \ds{\le(\le<G_1(u_{\e},v_{\e}) Q_{1,N}a_i,G_1(u_{\e},v_{\e})
Q_{1,N}a_j\r>_H\!-\!\le<\bar{G}(u_{\e}) Q_{1,N}a_i,\bar{G}(u_{\e})
Q_{1,N}a_j\r>_H \r)}
\end{array}\]
and
\[\begin{array}{l}
\ds{J^{\e}_i=D_i f(\le<u_{\e},P_N
a_1\r>_H,\ldots,\le<u_{\e},P_N
a_k\r>_H)\le<B_1(u_{\e},v_{\e})-\bar{B}(u_{\e}),P_N
a_i\r>_H.}
\end{array}\]
Hence, if we prove that for any $i,j=1,\ldots,k$
\begin{equation}
\label{sw1} \lim_{\e\to
0}\E\le|\int_{t_1}^{t_2}\E\le(I^{\e}_{ij}(r)\le|\mathcal{F}_{t_1}\r.\r)\,dr\r|=0
\end{equation}
and
\begin{equation}
\label{sw2} \lim_{\e\to
0}\E\le|\int_{t_1}^{t_2}\E\le(J^{\e}_{i}(r)\le|\mathcal{F}_{t_1}\r.\r)\,dr\r|=0,
\end{equation}
 we immediately get \eqref{sw3}.

We have
\[\begin{array}{l}
\ds{\int_{t_1}^{t_2}\E\le(I^{\e}_{ij}(r)\le|\mathcal{F}_{t_1}\r.\r)\,dr=
\sum_{l=1}^3\int_{t_1}^{t_2}\E\le(I^{\e}_{l,ij}(r)
\le|\mathcal{F}_{t_1}\r.\r)\,dr,}
\end{array}\]
where
\[\begin{array}{l}
\ds{I^{\e}_{1,ij}(r):= D^2_{ij} f(\le<u_{\e}(r),P_N a_1\r>_H,\ldots,\le<u_{\e}(r),P_N
a_k\r>_H)}\\
\vs \ds{\le<G_1(u_{\e}(r),v_{\e}(r))
Q_{1,N}a_i,G_1(u_{\e}(r),v_{\e}(r)) Q_{1,N}a_j\r>_H}\\
\vs \ds{- D^2_{ij} f(\le<u_{\e}([r/\d_\e]\d_\e),P_N
a_1\r>_H,\ldots,\le<u_{\e}([r/\d_\e]\d_\e),P_N
a_k\r>_H)}\\
\vs \ds{\le<G_1(u_{\e}([r/\d_\e]\d_\e),\hat{v}_{\e}(r))
Q_{1,N}a_i,G_1(u_{\e}([r/\d_\e]\d_\e),\hat{v}_{\e}(r))
Q_{1,N}a_j\r>_H,}
\end{array}\]
and
\[\begin{array}{l}
\ds{I^{\e}_{2,ij}(r):= D^2_{ij} f(\le<u_{\e}([r/\d_\e]\d_\e),P_N
a_1\r>_H,\ldots,\le<u_{\e}([r/\d_\e]\d_\e),P_N
a_k\r>_H)}\\
\vs \ds{\le(\le<G_1(u_{\e}([r/\d_\e]\d_\e),\hat{v}_{\e}(r))
Q_{1,N}a_i,G_1(u_{\e}([r/\d_\e]\d_\e),\hat{v}_{\e}(r))
Q_{1,N}a_j\r>_H\r.}\\
\vs{\le.-\le<\bar{G}(u_{\e}([r/\d_\e]\d_\e))
Q_{1,N}a_i,\bar{G}(u_{\e}([r/\d_\e]\d_\e)) Q_{1,N}a_j\r>_H\r),}
\end{array}\]
and
\[\begin{array}{l}
\ds{I^{\e}_{3,ij}(r):= D^2_{ij} f (\le<u_{\e}([r/\d_\e]\d_\e),P_N
a_1\r>_H,\ldots,\le<u_{\e}([r/\d_\e]\d_\e),P_N
a_k\r>_H)}\\
\vs \ds{\le<\bar{G}(u_{\e}([r/\d_\e]\d_\e))
Q_{1,N}a_i,\bar{G}(u_{\e}([r/\d_\e]\d_\e)) Q_{1,N}a_j\r>_H}\\
\vs \ds{- D^2_{ij} f (\le<u_{\e}(r),P_N a_1\r>_H,\ldots,\le<u_{\e}(r),P_N
a_k\r>_H)\le<\bar{G}(u_{\e}(r)) Q_{1,N}a_i,\bar{G}(u_{\e}(r))
Q_{1,N}a_j\r>_H.}
\end{array}\]
It is immediate to check that
\[\begin{array}{l}
\ds{|I^{\e}_{1,ij}(r)|+|I^{\e}_{3,ij}(r)|\leq
c\,\le(|u_\e([r/\d_\e]\d_\e)-u_\e(r)|_H+|v_\e(r)-\hat{v}_\e(r)|_H\r)}\\
\vs
\ds{\le(1+|u_\e(r)|_H^2+|v_\e(r)|_H^2+|u_\e([r/\d_\e]\d_\e)|_H+|\hat{v}_\e(r)|_H\r),}
\end{array}\]
so that
\[\begin{array}{l}
\ds{\le(\E\int_{t_1}^{t_2}\le[|I^{\e}_{1,ij}(r)|+|I^{\e}_{3,ij}(r)|\r]\,dr\r)^2}\\
\vs \ds{\leq c\int_{t_1}^{t_2} \le[\E
\,|u_\e([r/\d_\e]\d_\e)-u_\e(r)|_H^2+\E\,|v_\e(r)-\hat{v}_\e(r)|^2_H\r]\,dr}\\
\vs \ds{\times \int_{t_1}^{t_2} \le[
1+\E\,|u_\e(r)|_H^4+\E\,|v_\e(r)|_H^4+\E\,|u_\e([r/\d_\e]\d_\e)|_H^2+\E\,|\hat{v}_\e(r)|_H^2\r]\,dr.}
\end{array}\]
According to \eqref{sola30}, \eqref{unifue}, \eqref{unifve} and
\eqref{sw44}, we conclude
\[\begin{array}{l}
\ds{\le(\E\int_{t_1}^{t_2}\le[|I^{\e}_{1,ij}(r)|+|I^{\e}_{3,ij}(r)|\r]\,dr\r)^2}\\
\vs \ds{\leq c_T\le(\d_\e^{2\beta(\a)}+\sup_{t
\in\,[0,T]}\E\,|v_\e(t)-\hat{v}_\e(t)|^2_H\r)\le(1+|x|_H^4+|y|_H^4+|x|_\a^2\r),}
\end{array}\]
so that, due to \eqref{sola52}
\begin{equation}
\label{i13} \lim_{\e\to
0}\E\int_{t_1}^{t_2}\le[|I^{\e}_{1,ij}(r)|+|I^{\e}_{3,ij}(r)|\r]\,dr=0.
\end{equation}

\medskip

Next, let us estimate $I_{2,ij}$. We have
\[\begin{array}{l}
\ds{\int_{t_1}^{t_2}
\E\,\le(I_{2,ij}(r)\le|\mathcal{F}_{t_1}\r)\r.\,dr=\sum_{k=[t_1/\d_\e]+1}^{[t_2/\d_\e]-1}
\int_{k\d_\e}^{(k+1)\d_\e}\E\,
\le(\E\,\le(I_{2,ij}(r)\le|\mathcal{F}_{k\d_\e}\r)\r.\le|\mathcal{F}_{t_1}\r)\r.\,dr}\\
\vs
\ds{+\int_{t_1}^{([t_1/\d_\e]+1)\d_\e}\E\,\le(I_{2,ij}(r)\le|\mathcal{F}_{t_1}\r)\r.\,dr+
\int_{[t_2/\d_\e]\d_\e}^{t_2}\E\,
\le(\E\,\le(I_{2,ij}(r)\le|\mathcal{F}_{[t_2/\d_\e]\d_\e}\r)\r.\le|\mathcal{F}_{t_1}\r)\r.\,dr.}
\end{array}\]
The distribution of the process
\[v_{1,\e}(r):=\hat{v}_\e(k\d_\e+r),\ \ \ \ \ r
\in\,[0,\d_\e],\]coincides with the distribution of the process
\[v_{2,\e}(r):=\tilde{v}^{u_{\e}(k\d_\e),v_\e(k\d_\e)}(r/\e),\ \ \ \ \ r
\in\,[0,\d_\e],\]where $\tilde{v}^{u_{\e}(k\d_\e),v_\e(k\d_\e)}$
is the solution of problem \eqref{fast} with random frozen slow
component $u_{\e}(k\d_\e)$, random initial datum $v_\e(k\d_\e)$
and noise $\tilde{w}^{Q_2}$ independent of $u_{\e}(k\d_\e)$ and
$v_{\e}(k\d_\e)$. Then, if we set
\[h(x):= D^2_{ij} f (\le<x,P_N
a_1\r>_H,\ldots,\le<x,P_N a_k\r>_H),\ \ \ \ x \in\,H,\] for any
$k=[t_1/\d_\e]+1,\ldots,[t_2/\d_\e]-1$ we have
\[\begin{array}{l}
\ds{\int_{k \d_\e}^{(k+1)
\d_\e}\E\,\le(I_{2,ij}(r)\le|\mathcal{F}_{k\d_\e}\r)\r.\,dr}\\
\vs
\ds{=\int_{0}^{\d_\e}\E\,\le(h(u_{\e}(k\d_\e))\r.\le[\le<G_1(u_{\e}(k\d_\e),v_{1,\e}(r))
Q_{1,N}a_i,G_1(u_{\e}(k\d_\e),v_{1,\e}(r))
Q_{1,N}a_j\r>_H\r.}\\
\vs \ds{\le.\le.-\le<\bar{G}(u_{\e}(k\d_\e))
Q_{1,N}a_i,\bar{G}(u_{\e}(k\d_\e))
Q_{1,N}a_j\r>_H\r]\le|\mathcal{F}_{k\d_\e}\r)\r.\,dr}\\
\vs
\ds{=\int_{0}^{\d_\e}\E\,\le(h(u_{\e}(k\d_\e))\r.\le[\le<G_1(u_{\e}(k\d_\e),v_{2,\e}(r))
Q_{1,N}a_i,G_1(u_{\e}(k\d_\e),v_{2,\e}(r))
Q_{1,N}a_j\r>_H\r.}\\
\vs \ds{\le.\le.-\le<\bar{G}(u_{\e}(k\d_\e))
Q_{1,N}a_i,\bar{G}(u_{\e}(k\d_\e))
Q_{1,N}a_j\r>_H\r]\le|\mathcal{F}_{k\d_\e}\r)\r.\,dr}
\end{array}\]
and, with a change of variables,
\[\begin{array}{l}
\ds{\int_{k \d_\e}^{(k+1)
\d_\e}\E\,\le(I_{2,ij}(r)\le|\mathcal{F}_{k\d_\e}\r)\r.\,dr
=\e\int_{0}^{\frac{\d_\e}{\e}}\E\,\le(h(u_{\e}(k\d_\e))\r.}\\
\vs \ds{\le[\le<G_1(u_{\e}(k\d_\e),
\tilde{v}^{u_{\e}(k\d_\e),v_\e(k\d_\e)}(r))
Q_{1,N}a_i,G_1(u_{\e}(k\d_\e),\tilde{v}^{u_{\e}(k\d_\e),v_\e(k\d_\e)}(r))
Q_{1,N}a_j\r>_H\r.}\\
\vs
\ds{\le.\le.-\le<\bar{G}(u_{\e}(k\d_\e))
Q_{1,N}a_i,\bar{G}(u_{\e}(k\d_\e))
Q_{1,N}a_j\r>_H\r]\le|\mathcal{F}_{k\d_\e}\r)\r.\,dr.}
\end{array}\]
Therefore, due to the Markov property, we obtain
\[\begin{array}{l}
\ds{\int_{k\d_\e}^{(k+1)\d_\e}\E\,\le(I_{2,ij}(r)\le|\mathcal{F}_{k\d_\e}\r)\r.\,dr}\\
\vs \ds{=\e\int_{0}^{\frac{\d_\e}{\e}}\le( \E\,\le<G_1(x,
v^{x,y}(r)) Q_{1,N}a_i,G_1(x,v^{x,y}(r))
Q_{1,N}a_j\r>_Hh(x)\r.}\\
\vs\ds{\le.-\le<\bar{G}(x) Q_{1,N}a_i,\bar{G}(x)
Q_{1,N}a_j\r>_Hh(x)\r)_{\le|x=u_{\e}(k\d_\e),y=v_\e(k\d_\e)\r.}\,dr,}
\end{array}\]
and hence, according to \eqref{sola8},
\[\begin{array}{l}
\ds{\le|\int_{k\d_\e}^{(k+1)\d_\e}\E\,\le(I_{2,ij}(r)\le|\mathcal{F}_{k\d_\e}\r)\r.\,dr\r|\leq
c_{ij}\,\d_\e\a(\d_\e/\e)\le(1+|u_{\e}(k\d_\e)|^2_H+|v_\e(k\d_\e)|^2_H\r),\
\ \ \ \mathbb{P}-\text{a.s}.}
\end{array}\]
Analogously,
\[\begin{array}{l}
\ds{\le|\int_{t_1}^{([t_1/\d_\e]+1)\d_\e}\E\,\le(I_{2,ij}(r)\le|\mathcal{F}_{t_1}\r)\r.\,dr\r|}\\
\vs \ds{\leq c_{ij}\,\d_\e
\le(1-\le\{t_1/\d_\e\r\}\r)\a\le(\le(1-\le\{t_1/\d_\e\r\}\r)\d_\e/\e\r)
\le(1+|u_{\e}(t_1)|^2_H+|v_\e(t_1)|^2_H\r),\ \ \ \
\mathbb{P}-\text{a.s}.} \end{array}\] and
\[\begin{array}{l}
\ds{\le|\int_{[t_2/\d_\e]\d_\e}^{t_2}\E\,\le(I_{2,ij}(r)\le|\mathcal{F}_{[t_2/\d_\e]\d_\e}\r)\r.\,dr\r|}\\
\vs \ds{ \leq c_{ij}\,\d_\e
\le\{t_2/\d_\e\r\}\a\le(\le\{t_2/\d_\e\r\}\d_\e/\e\r)
\le(1+|u_{\e}([t_2/\d_\e]\d_\e)|^2_H+|v_\e([t_2/\d_\e]\d_\e)|^2_H\r),\
\ \ \ \mathbb{P}-\text{a.s}.} \end{array}\]

 Thanks to \eqref{unifue} and \eqref{unifvebis}, the three
 inequality above
imply
\[\lim_{\e\to 0}\E\le|\int_{t_1}^{t_2}\E\le(I^{\e}_{2,ij}(r)\le|\mathcal{F}_{t_1}\r.\r)\,dr\r|=0,\] so that from
\eqref{i13} we conclude that \eqref{sw1} holds.

In an analogous way (just by replacing assumption \eqref{sola8}
with assumption \eqref{sola7}), we can prove that \eqref{sw2}
holds and then, combining together \eqref{sw1} with \eqref{sw2},
we obtain \eqref{sw3}.

\end{proof}

\section{The averaging limit}
\label{sec6}

Before concluding with the proof of the averaging limit, we introduce an approximating slow motion equation and prove a limiting result.

For any $n \in\,\nat$, we define
\[A_{1,n}:=A_1 P_n,\ \ \ \ \ \ Q_{1,n}:=Q_1 P_n,\]
where $P_n$ is the projection of $H$ into
$\text{spam}\le<e_{1,1},\ldots,e_{1,n}\r>$, and we denote by
$u_{\e,n}$ the solution of the problem
\begin{equation}
\label{appro} du(t)=\le[A_{1,n}
u(t)+B_1(u(t),v_\e(t))\r]\,dt+G_1(u(t),v_\e(t))dw^{Q_{1,n}}(t),\ \
\ \ \ u(0)=x.
\end{equation}
Notice that, as $A_{1,n} \in\,\mathcal{L}(H)$ and $Q_{1,n}$ has
finite rank, $u_{\e,n}$ is a strong solution to \eqref{appro},
that is
\begin{equation}
\label{strong}
u_{\e,n}(t)=x+\int_0^t
\le[A_{1,n} u_{\e,n}(s)+B_1(u_{\e,n}(s),v_\e(s))\r]\,ds+\int_0^t
G_1(u_{\e,n}(s),v_\e(s))dw^{Q_{1,n}}(s).
\end{equation}
By standard arguments it is possible to show that for any $p\geq
1$ and $\e>0$
\begin{equation}
\label{limappro} \lim_{n\to \infty}\E \sup_{t
\in\,[0,T]}\,|u_{\e}(t)-u_{\e,n}(t)|_H^2=0.
\end{equation}
Moreover, for any $p\geq 1$ and $\e>0$ it holds
\begin{equation}
\label{unifn} \sup_{n \in\,\nat}\,\E \sup_{t
\in\,[0,T]}\,|u_{\e,n}(t)|_H^p<\infty.
\end{equation}

In analogy to \eqref{sl}, we introduce the Kolmogorov operator associated with
the approximating slow motion equation \eqref{appro}, with frozen
fast component $y \in\,H$, by setting
\[\begin{array}{l}
\ds{\mathcal{L}^n_{sl}\,\varphi(x,y)}\\
\vs \ds{=\frac 12
\text{Tr}\,\le[Q_{1,n}G_1(x,y)D^2\varphi(x)G_1(x,y)Q_{1,n}\r]+\le<A_{1,n}
D\varphi(x),x\r>_H+\le<D\varphi(x),B_1(x,y)\r>_H}\\
\vs \ds{=\frac 12\sum_{i,j=1}^kD^2_{ij} f(\le<x,P_N a_1\r>_H,\ldots,\le<x,P_N
a_k\r>_H)\le<G_1(x,y) Q_{1,N\wedge n}a_i,G_1(x,y) Q_{1,N\wedge n}a_j\r>_H}\\
\vs \ds{+\sum_{i=1}^kD_i f(\le<x,P_N
a_1\r>_H,\ldots,\le<x,P_N a_k\r>_H)\le(\le<x,A_{1,N\wedge n}
a_i\r>_H+\le<B_1(x,y),P_N a_i\r>_H\r).}
\end{array}\]

In the next lemma we show that the Kolmogorov operator
$\mathcal{L}^n_{sl}$ approximates in a proper way the Kolmogorov
operator $\mathcal{L}_{sl}$.

\begin{Lemma}
Assume Hypotheses \ref{H1} and \ref{H2}. Then for any $\varphi
\in\,\mathcal{R}(H)$ and $\e>0$
\begin{equation}
\label{limkol} \lim_{n\to \infty}\E\sup_{t
\in\,[0,T]}\le|\mathcal{L}^n_{\text{sl}}\,\varphi(u_{\e,n}(t),v_\e(t))-
\mathcal{L}_{\text{sl}}\,\varphi(u_\e(t),v_\e(t))\r|=0.
\end{equation}
\end{Lemma}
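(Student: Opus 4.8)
The plan is to reduce \eqref{limkol} to a local Lipschitz estimate for $\mathcal{L}_{sl}\varphi(\cdot,y)$ and then to invoke the convergence \eqref{limappro}. Fix $\varphi\in\mathcal{R}(H)$, say $\varphi(x)=f(\langle x,P_N a_1\rangle_H,\ldots,\langle x,P_N a_k\rangle_H)$ with $f\in C^\infty_c(\reals^k)$. The first step is the observation that, in the expanded expression for $\mathcal{L}^n_{sl}\varphi$, the only dependence on $n$ enters through the vectors $Q_{1,N\wedge n}a_i=Q_1 P_{N\wedge n}a_i$ and $A_{1,N\wedge n}a_i=A_1 P_{N\wedge n}a_i$, while the $B_1$-term contains $P_N a_i$ for every $n$. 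Hence as soon as $n\geq N$ one has $N\wedge n=N$, so that $\mathcal{L}^n_{sl}\varphi(x,y)=\mathcal{L}_{sl}\varphi(x,y)$ for all $x,y\in H$. Since we let $n\to\infty$, it therefore suffices to prove
\[\lim_{n\to\infty}\E\sup_{t\in[0,T]}\le|\mathcal{L}_{sl}\varphi(u_{\e,n}(t),v_\e(t))-\mathcal{L}_{sl}\varphi(u_\e(t),v_\e(t))\r|=0.\]

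The second step is the local Lipschitz bound. For each fixed $y\in H$ and each fixed vector $Q_1 P_N a_i$, the map $x\in H\mapsto G_1(x,y)Q_1 P_N a_i\in H$ is Lipschitz continuous and of at most linear growth: indeed $Q_1 P_N a_i\in L^\infty(D)$ by Hypothesis \ref{H1}, and since $(x,y)\mapsto G_1(x,y)\in\mathcal{L}(L^\infty(D),H)$ is itself Lipschitz and of linear growth, one has $|G_1(x,y)Q_1 P_N a_i|_H\leq \|G_1(x,y)\|_{\mathcal{L}(L^\infty(D),H)}\,|Q_1 P_N a_i|_0\leq c(1+|x|_H+|y|_H)\,|Q_1 P_N a_i|_0$ and $|[G_1(x',y)-G_1(x,y)]Q_1 P_N a_i|_H\leq c\,|x'-x|_H\,|Q_1 P_N a_i|_0$. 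Writing each summand of $\mathcal{L}_{sl}\varphi(x',y)-\mathcal{L}_{sl}\varphi(x,y)$ as the increment of the bounded smooth factors $D_i f$, $D^2_{ij}f$ (whose arguments $\langle x,P_N a_l\rangle_H$ are Lipschitz in $x$) times the remaining inner product, plus those factors evaluated at $x'$ times the increment of the inner product, and treating the trace term via $\langle a',a'\rangle-\langle a,a\rangle=\langle a'-a,a'\rangle+\langle a,a'-a\rangle$, the drift term using $A_1 P_N a_i\in H$, and the reaction term using the Lipschitz continuity and linear growth of $B_1$, I obtain
\[\le|\mathcal{L}_{sl}\varphi(x',y)-\mathcal{L}_{sl}\varphi(x,y)\r|\leq c_\varphi\,|x'-x|_H\,\le(1+|x'|_H^2+|x|_H^2+|y|_H^2\r).\]

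The third step is to insert $x'=u_{\e,n}(t)$, $x=u_\e(t)$, $y=v_\e(t)$, take $\sup_{t\in[0,T]}$ and expectation, and apply the Cauchy--Schwarz inequality:
\[\E\sup_{t}\le|\mathcal{L}_{sl}\varphi(u_{\e,n}(t),v_\e(t))-\mathcal{L}_{sl}\varphi(u_\e(t),v_\e(t))\r|\leq c_\varphi\le(\E\sup_{t}|u_{\e,n}(t)-u_\e(t)|_H^2\r)^{\frac12}\le(\E\sup_{t}\le(1+|u_{\e,n}(t)|_H^2+|u_\e(t)|_H^2+|v_\e(t)|_H^2\r)^2\r)^{\frac12}.\]
The first factor tends to $0$ by \eqref{limappro}. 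The second factor is bounded uniformly in $n$: the fourth moments of $u_{\e,n}$ are controlled by \eqref{unifn}, those of $u_\e$ by \eqref{unifue}, and, for the fixed $\e$ under consideration, $v_\e\in\mathcal{C}_{T,p}$ for every $p$, so that $\E\sup_t|v_\e(t)|_H^4<\infty$. This yields \eqref{limkol}.

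I expect the main technical point to be the local Lipschitz estimate of the trace term, where $g_1$ is only assumed Lipschitz and not bounded; the difficulty is resolved precisely because the vectors $Q_1 P_N a_i$ on which $G_1$ acts belong to $L^\infty(D)$, which reduces the estimate to $H$-norm bounds with at most quadratic growth. The remaining manipulations are the routine chain-rule bookkeeping for regular cylindrical functions.
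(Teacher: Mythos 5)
Your proposal is correct and follows essentially the same route as the paper's own proof: the reduction to $n\geq N$, where $\mathcal{L}^n_{sl}\,\varphi=\mathcal{L}_{sl}\,\varphi$ identically, then the local Lipschitz estimate $|\mathcal{L}_{sl}\,\varphi(x_1,y)-\mathcal{L}_{sl}\,\varphi(x_2,y)|\leq c\,|x_1-x_2|_H\,(1+|x_1|_H^2+|x_2|_H^2+|y|_H^2)$, and the conclusion from \eqref{limappro} and \eqref{unifn}. You merely make explicit what the paper leaves implicit, namely the Cauchy--Schwarz step (with the fourth moments of $u_{\e,n}$, $u_\e$ and, for fixed $\e$, of $v_\e\in\mathcal{C}_{T,4}$) and the quadratic-growth bookkeeping for the trace term via $Q_1P_Na_i\in L^\infty(D)$.
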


\begin{proof}
Let
\[\varphi(x)=f(\le<x,P_N a_1\r>_H,\ldots,\le<x,P_N a_k\r>_H),\ \ \ \ \ x \in\,H,\]
for some $k,N \in\,\nat$, $a_1,\ldots,a_k \in\,H$ and $f
\in\,C^\infty_c(\mathbb{R}^k)$. If $n\geq N$, then
\[\mathcal{L}^n_{sl}\,\varphi(x,y)-\mathcal{L}_{sl}\,\varphi(x,y)=0,\ \ \ \ \ x,y \in\,H,\]
 so that for any $\e>0$ and $n\geq N$
\[\mathcal{L}^n_{sl}\,\varphi(u_{\e,n}(t),v_\e(t))-
\mathcal{L}_{sl}\,\varphi(u_\e(t),v_\e(t))=\mathcal{L}_{sl}\,\varphi(u_{\e,n}(t),v_\e(t))-
\mathcal{L}_{sl}\,\varphi(u_\e(t),v_\e(t)).\] Now, due to the
assumptions on the coefficients $B_1$ and $G_1$ and on the funtion
$f$, it is immediate to check that for $x_1,x_2,y \in\,H$
\[|\mathcal{L}_{sl}\,\varphi(x_1,y)-\mathcal{L}_{sl}\,\varphi(x_2,y)|\leq
c\,|x_1-x_2|_H\,(1+|x_1|^2_H+|x_2|_H^2+|y|_H^2).\] Then
\[\begin{array}{l}
\ds{\sup_{t
\in\,[0,T]}\le|\mathcal{L}_{sl}\,\varphi(u_{\e,n}(t),v_\e(t))-
\mathcal{L}_{sl}\,\varphi(u_\e(t),v_\e(t))\r|}\\
\vs \ds{\leq c\,\sup_{t \in\,[0,T]}\,|u_{\e,n}(t)-u_\e(t)|_H
\,(1+\sup_{t \in\,[0,T]}\,|u_{\e,n}(t)|^2_H+\sup_{t
\in\,[0,T]}\,|u_\e(t)|_H^2+\sup_{t \in\,[0,T]}\,|v_\e(t)|_H^2).}
\end{array}\]
According to \eqref{limappro} and \eqref{unifn}, this implies
\eqref{limkol}.
\end{proof}

Finally, we conclude with the proof of the averaging limit. 

\begin{Theorem}
\label{theo5.2}
 Assume Hypotheses \ref{H1}-\ref{H5} and fix any $x
\in\,D((-A_1)^\a)$, with $\a>0$, and any $y \in\,H$. Then, if
$\bar{u}$ is the solution of the averaged equation
\eqref{averaged}, for any $T>0$ and $\eta>0$ we have
\[\text{w}-\lim_{\e\to 0}\,{\cal L}(u_\e)={\cal L}(\bar{u}),\ \ \ \ \ \ \text{in}\ \ C([0,T];H).\]
\end{Theorem}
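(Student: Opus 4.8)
The plan is to run the classical martingale-problem argument, now in infinite dimensions. First I would invoke Corollary \ref{corollary1}, which guarantees that the family $\{\mathcal{L}(u_\e)\}_{\e\in(0,1]}$ is tight in $C([0,T];H)$: by Prokhorov's theorem every sequence $\e_n\downarrow 0$ then admits a subsequence (not relabelled) along which $\mathcal{L}(u_{\e_n})\rightharpoonup\mathbb{Q}$ for some probability measure $\mathbb{Q}$ on $C_x([0,T];H)$. The entire statement reduces to showing that any such limit $\mathbb{Q}$ solves the martingale problem with parameters $(x,A_1,\bar{B},\bar{G},Q_1)$. Indeed, by Section \ref{4.2} that problem has a \emph{unique} solution, which coincides with $\mathcal{L}(\bar{u})$; hence all subsequential limits equal $\mathcal{L}(\bar{u})$, and the full weak convergence \eqref{limite} follows.

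To identify $\mathbb{Q}$, fix $\varphi\in\mathcal{R}(H)$, instants $0\le s_1<\cdots<s_m\le t_1<t_2\le T$ and a bounded continuous $\psi:H^m\to\reals$; I must check that
\[\E^{\mathbb{Q}}\le[\le(\varphi(\eta(t_2))-\varphi(\eta(t_1))-\int_{t_1}^{t_2}\mathcal{L}_{av}\varphi(\eta(s))\,ds\r)\psi(\eta(s_1),\dots,\eta(s_m))\r]=0.\]
Since $u_\e$ is only a mild solution, Itô's formula cannot be applied to $\varphi(u_\e)$ directly; this is precisely why the finite-dimensional approximation $u_{\e,n}$ of \eqref{appro}–\eqref{strong} was introduced. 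Applying Itô's formula to the strong solution $u_{\e,n}$ and multiplying by the $\mathcal{F}_{t_1}$-measurable factor $\psi(u_{\e,n}(s_1),\dots,u_{\e,n}(s_m))$, the stochastic-integral term is a martingale increment past $t_1$ and drops out under expectation, leaving
\[\E\le[\le(\varphi(u_{\e,n}(t_2))-\varphi(u_{\e,n}(t_1))\r)\psi(\dots)\r]=\E\le[\int_{t_1}^{t_2}\mathcal{L}^n_{sl}\varphi(u_{\e,n}(s),v_\e(s))\,ds\,\psi(\dots)\r].\]
Letting $n\to\infty$ and using \eqref{limappro}, \eqref{limkol} together with the uniform moment bound \eqref{unifue} of Proposition \ref{prop1}, I would transfer this identity to the mild solution, replacing $u_{\e,n}$ by $u_\e$ and $\mathcal{L}^n_{sl}$ by $\mathcal{L}_{sl}$.

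Subtracting $\int_{t_1}^{t_2}\mathcal{L}_{av}\varphi(u_\e(s))\,ds\,\psi(\dots)$ from both sides, the quantity I need to send to zero becomes
\[\E\le[\int_{t_1}^{t_2}\le(\mathcal{L}_{sl}\varphi(u_\e(s),v_\e(s))-\mathcal{L}_{av}\varphi(u_\e(s))\r)ds\,\psi(\dots)\r],\]
whose modulus, since $\psi$ is bounded and $\mathcal{F}_{t_1}$-measurable, is controlled by the tower property as $\|\psi\|_0\,\E\big|\int_{t_1}^{t_2}\E(\mathcal{L}_{sl}\varphi-\mathcal{L}_{av}\varphi\mid\mathcal{F}_{t_1})\,ds\big|$. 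This is exactly the expression shown to vanish as $\e\to0$ in the key Lemma \ref{lemma51}. To conclude that the corresponding functional also vanishes under $\mathbb{Q}$, I would combine the weak convergence $\mathcal{L}(u_{\e_n})\rightharpoonup\mathbb{Q}$ with the uniform integrability furnished by \eqref{unifue}, after noting that $\omega\mapsto(\varphi(\omega(t_2))-\varphi(\omega(t_1))-\int_{t_1}^{t_2}\mathcal{L}_{av}\varphi(\omega(s))\,ds)\,\psi(\omega(s_1),\dots,\omega(s_m))$ is continuous on $C_x([0,T];H)$.

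The main obstacle is this last passage to the limit in the expectations. The integrand built from $\mathcal{L}_{av}\varphi$ grows quadratically in the sup-norm of the path, through both the unbounded drift contribution $\le<x,A_1P_Na_i\r>_H$ and the second-order term involving $\bar{G}(x)$, so weak convergence alone does not justify convergence of expectations; it must be coupled with the uniform-in-$\e$ moment estimates of Proposition \ref{prop1} by a standard truncation argument. Everything else is bookkeeping once Lemma \ref{lemma51}—which carries the genuinely infinite-dimensional Khasminskii estimate—and the approximation results \eqref{limappro}, \eqref{limkol} are in hand.
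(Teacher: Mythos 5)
Your proposal follows essentially the same route as the paper's proof: tightness from Corollary \ref{corollary1} plus Prokhorov, It\^o's formula applied to the Galerkin approximation $u_{\e,n}$ combined with \eqref{limappro} and \eqref{limkol} to obtain the martingale identity \eqref{marti} for the mild solution $u_\e$, the Khasminskii estimate of Lemma \ref{lemma51} to annihilate the $\mathcal{L}_{sl}-\mathcal{L}_{av}$ correction after conditioning on $\mathcal{F}_{t_1}$, and uniqueness of the martingale problem with parameters $(x,A_1,\bar{B},\bar{G},Q_1)$ to identify every subsequential limit with $\mathcal{L}(\bar{u})$. Your closing remark that $\mathcal{L}_{av}\varphi$ grows quadratically along paths, so that passing from $\E^{\mathbb{Q}_k}$ to $\E^{\mathbb{Q}}$ requires the uniform moment bounds of Proposition \ref{prop1} (uniform integrability) and not weak convergence alone, correctly fills in a step the paper leaves implicit.
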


\begin{proof}
 As $u_{\e,n}$ verifies
\eqref{strong}, for any $\varphi \in\,\mathcal{R}(H)$ we can apply
It\^o's formula to $\varphi(u_{\e,n})$ and we obtain that the
process
\[t \in\,[0,T]\mapsto \varphi(u_{\e,n}(t))-\varphi(x)-\int_0^t\mathcal{L}^n_{sl}\,\varphi(u_{\e,n}(s),v_\e(s))\,ds,\]
is a martingale with respect to $\{\mathcal{F}_t\}_{t
\in\,[0,T]}$. Then, by taking the limit  as $n$ goes to
infinity, due to \eqref{limappro} and to \eqref{limkol}
that for any $\e>0$ we have  the process
\[t \in\,[0,T]\mapsto \varphi(u_{\e}(t))-\varphi(x)-\int_0^t\mathcal{L}_{sl}\,\varphi(u_{\e}(s),v_\e(s))\,ds,\]
is an $\mathcal{F}_t$-martingale. In particular, for any $0\leq
s\leq t\leq T$ and any bounded $\mathcal{F}_s$-measurable random
variable $\Psi$
\begin{equation}
\label{marti} \E\le(\Psi\le[\varphi(u_{\e}(t))-\varphi(u_{\e}(s))-
\int_s^t\mathcal{L}_{sl}\,\varphi(u_{\e}(r),v_\e(r))\,dr\r]\r)=0.\end{equation}

\medskip

Due to the tightness of the sequence $\{\mathcal{L}(u_\e)\}_{\e
\in\,(0,1]}$ in $\mathcal{P}(C_x([0,T];H),\mathcal{E})$ (see
Proposition \ref{corollary1}), there exists a sequence
$\{\e_k\}_{k \in\,\nat}\downarrow 0$ such that the sequence
$\{\mathcal{L}(u_{\e_k})\}_{k \in\,\nat}$ converges weakly to some
$\mathbb{Q}$. If we are able  to identify $\mathbb{Q}$ with
$\mathcal{L}(\bar{u})$, where $\bar{u}$ is the unique mild
solution of the averaged equation \eqref{averaged}, then we conclude that the whole sequence $\{{\cal L}(u_\e)\}_{\e \in\,(0,1]}$ weakly converges to ${\cal L}(\bar{u})$ in $C([0,T];H)$. 

\medskip

We denote by $\E^{\mathbb{Q}}$ and $\E^{\mathbb{Q}_k}$ the
expectations in $(C_x([0,T];H),\mathcal{E})$ with respect to the
probability measures $\mathbb{Q}$ and $\mathbb{Q}_k$,
 where $\mathbb{Q}_k=\mathcal{L}(u_{\e_k})$, and
we denote by $\eta(t)$ the canonical process in
$(C_x([0,T];H),\mathcal{E})$. Then,  for any bounded
$\mathcal{E}_s$-measurable random variable 
\[\Phi=F(\eta(t_1),\ldots,\eta(t_N)), \]
with $F \in\,C_b(\reals^N)$ and $0\leq t_1<\ldots<t_N$, any function
$\varphi \in\,\mathcal{R}(H)$ and any $0\leq s\leq t\leq T$ we
have
\[\begin{array}{l}
\ds{\E^\mathbb{Q}\le(\Phi\le[\varphi(\eta(t))-\varphi(\eta(s))-
\int_s^t\mathcal{L}_{av}\,\varphi(\eta(r))\,dr\r]\r)}\\
\vs \ds{=\lim_{k\to
\infty}\E^{\mathbb{Q}_k}\le(\Phi\le[\varphi(\eta(t))-\varphi(\eta(s))-
\int_s^t\mathcal{L}_{av}\,\varphi(\eta(r))\,dr\r]\r)}\\
\vs \ds{=\lim_{k\to \infty}\E\le(\Phi\circ
u_{\e_k}\le[\varphi(u_{\e_{k}}(t))-\varphi(u_{\e_{k}}(s))-
\int_s^t\mathcal{L}_{av}\,\varphi(u_{\e_{k}}(r))\,dr\r]\r).}
\end{array}\]
In view of \eqref{marti}, this implies
\[\begin{array}{l}
\ds{\E^\mathbb{Q}\le(\Phi\le[\varphi(\eta(t))-\varphi(\eta(s))-
\int_s^t\mathcal{L}_{av}\,\varphi(\eta(r))\,dr\r]\r)}\\
\vs \ds{=\lim_{k\to \infty}\E\le(\Phi\circ u_{\e_k}
\int_s^t\le[\mathcal{L}_{sl}\,\varphi(u_{\e_{k}}(r),v_{\e_{k}}(r))-
\mathcal{L}_{av}\,\varphi(u_{\e_{k}}(r))\r]\,dr\r).}
\end{array}\]
We have
\[\begin{array}{l}
\ds{\le|\E\le(\Phi\circ u_{\e_k}
\int_s^t\le[\mathcal{L}_{sl}\,\varphi(u_{\e_{k}}(r),v_{\e_{k}}(r))-
\mathcal{L}_{av}\,\varphi(u_{\e_{k}}(r))\r]\,dr\r)\r|}\\
\vs \ds{=\le|\E\le(\Phi\circ
u_{\e_k}\int_s^t\E\le(\mathcal{L}_{sl}\,\varphi(u_{\e_{k}}(r),v_{\e_{k}}(r))-
\mathcal{L}_{av}\,\varphi(u_{\e_{k}}(r))\le|\mathcal{F}_s\r.\r)\,dr\r)\r|}\\
\vs \ds{\leq \|\Phi\|_\infty
\E\le|\int_s^t\E\le(\mathcal{L}_{sl}\,\varphi(u_{\e_{k}}(r),v_{\e_{k}}(r))-
\mathcal{L}_{av}\,\varphi(u_{\e_{k}}(r))\le|\mathcal{F}_s\r.\r)\,dr\r|.}
\end{array}\]
Hence, according to \eqref{sw3} we can   conclude  that
\[\E^\mathbb{Q}\le(\Phi\le[\varphi(\eta(t))-\varphi(\eta(s))-
\int_s^t\mathcal{L}_{av}\,\varphi(\eta(r)))\,dr\r]\r)=0.\] This
means that $\mathbb{Q}$ solves  the martingale problem with
parameters $(x,A_1,\bar{B},\bar{G},Q_1)$, and, due to what we have
see in subsection \ref{4.2}, $\mathbb{Q}=\mathcal{L}(\bar{u})$.

\end{proof}

\subsection{Averaging limit  in probability}

In the case the diffusion coefficient $g_1$ in the slow motion equation does not depend on the fast variable, it is possible to prove that the sequence $\{u_\e\}_{\e \in\,(0,1]}$ converges in probability to $\bar{u}$ and not just in weak sense. 

To this purpose we need to replace Hypothesis \ref{H4} with the following stronger condition.

\begin{Hypothesis}
\label{H7} There exists a mapping $\bar{B}_1:H\to H$ such that for
any $T>0$, $t\geq 0$ and $x,y,h \in\,H$
\begin{equation}
\label{sola7bis} \E\,\le|\frac
1T\int_t^{t+T}\le<B_1(x,v^{x,y}(s)),h\r>_H\,ds-\le<\bar{B}_1(x),h\r>_H\r|\leq
\a(T)\,\le(1+|x|_H+|y|_H\r)\,|h|_H,\end{equation}
 for some $\a(T)$ such that
\[\lim_{T\to \infty}\a(T)=0.\]

\end{Hypothesis}

In Subsection \ref{subsec21}, by refeering  our previous paper \cite{cerrai2} we have seen that if the dissipativity constant of the operator $A_1$ is large enough and/or the Lipschitz constants $L_{b_2}$ and $L_{g_2}$ and the constants $\zeta_2$ and $\kappa_2$ introduced in Hypothesis \ref{H1} are  small enough (in this spirit see condition \eqref{condition}), then the fast transition semigroup admits a unique invariant measure $\mu^x$, which is strongly mixing and such that \eqref{sola4} holds. 

In Lemma \ref{conv} we have seen that this implies that for any $\varphi \in\,\text{Lip} (H)$, $T>0$, $x, y \in\,H$ and $t\geq 0$
\[\E\,\le|\frac
1T\int_t^{t+T}\varphi(v^{x,y}(s))\,ds-\int_H
\varphi(z)\,\mu^x(dz)\r|\leq
\frac{c}{\sqrt{T}}\,\le([\varphi]_{\text{
Lip}(H)}(1+|x|_H+|y|_H)+|\varphi(0)|\r).\]
Then, if we apply the inequality above to 
to $\varphi=\le<B_1(x,\cdot),h\r>_H$ and if we set 
$\bar{B}_1(x)=\le<B_1(x,\cdot),\mu^x\r>$, we have that Hypothesis \ref{H7} holds.

\medskip

As $u_\e$ is the mild solution of the slow motion equation in
system \eqref{eq0} (see also \eqref{astratto} for its abstract
version), for any $h \in\,D(A_1)\cap L^\infty(D)$ we have
\begin{equation}
\label{byparts}
\begin{array}{l}
\ds{\le<u_\e(t),h\r>_H=\le<x,h\r>_H+\int_0^t\le<u_{\e}(s),A_1
h\r>_H\,ds+\int_0^t
\le<\bar{B}_1(u_{\e}(s)),h\r>_H\,ds}\\
\vs \ds{+\int_0^t \le<G_1(u_\e(s)) h,dw^{Q_1}(s)\r>_H+R_\e(t),}
\end{array}\end{equation}
where
\[R_\e(t):=\int_0^t
\le<B_1(u_{\e}(s),v_\e(s))-\bar{B}_1(u_{\e}(s)),h\r>_H\,ds.\] In
order to prove the averaging limit, we need the following key
lemma, which is the counterpart of Lemma \ref{lemma51}.

\begin{Lemma}
\label{re} Assume Hypotheses \ref{H1}, \ref{H2} and \ref{H7} and fix $T>0$.
Then, for any $x \in\,D((-A_1)^\a)$, with $\a>0$, and $y, h
\in\,H$
 we have
\[\lim_{\e\to 0}\,\E\sup_{t \in\,[0,T]}|R_\e(t)|=0.\]
\end{Lemma}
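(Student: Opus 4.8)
The plan is to mimic the proof of Lemma \ref{lemma51}, but now the quantity to control is the scalar error term $R_\e(t)=\int_0^t \le<B_1(u_\e(s),v_\e(s))-\bar B_1(u_\e(s)),h\r>_H\,ds$ rather than a conditional expectation of the difference of Kolmogorov operators. The essential mechanism is the same Khasminskii time-discretization: I would partition $[0,T]$ into intervals of length $\d_\e>0$ with $\zeta_\e=\d_\e/\e=(\log\e^{-\kappa_2})^{\kappa_1}$, introduce the frozen-coefficient fast process $\hat v_\e$ solving \eqref{hatve}, and exploit the closeness estimate $\sup_{t\in[0,T]}\E|\hat v_\e(t)-v_\e(t)|_H^2\to 0$ from \eqref{sola52} together with the increment bound \eqref{sola30} for $u_\e$.

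Concretely, I would write $R_\e(t)=\sum_k \int_{k\d_\e}^{(k+1)\d_\e} \le<B_1(u_\e(s),v_\e(s))-\bar B_1(u_\e(s)),h\r>_H\,ds$ and, on each block, replace $u_\e(s)$ by its frozen value $u_\e(k\d_\e)$ and $v_\e(s)$ by $\hat v_\e(s)$, splitting the integrand into three pieces exactly as the $I^\e_{1,ij},I^\e_{2,ij},I^\e_{3,ij}$ decomposition in Lemma \ref{lemma51}. Using the Lipschitz continuity of $B_1$ and of $\bar B_1$, the two ``replacement'' pieces are bounded in $L^1(\Omega)$ by the factor $\le(\E|u_\e(k\d_\e)-u_\e(s)|_H^2\r)^{1/2}+\le(\E|v_\e(s)-\hat v_\e(s)|_H^2\r)^{1/2}$ times an $H$-norm factor that is bounded uniformly in $\e$ by \eqref{unifue} and \eqref{unifve}; these tend to zero as $\e\to 0$ by \eqref{sola30} and \eqref{sola52}.

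The genuinely averaging piece is the middle term, $\int_{k\d_\e}^{(k+1)\d_\e}\le<B_1(u_\e(k\d_\e),\hat v_\e(s))-\bar B_1(u_\e(k\d_\e)),h\r>_H\,ds$. Here I would condition on $\mathcal F_{k\d_\e}$, use that the law of $\hat v_\e(k\d_\e+\cdot)$ coincides with that of the fast process $\tilde v^{u_\e(k\d_\e),v_\e(k\d_\e)}(\cdot/\e)$ with frozen random slow component, and apply the Markov property. After the change of variables $s\mapsto s/\e$ this converts each block integral into $\e\int_0^{\d_\e/\e}\E\le<B_1(x,v^{x,y}(r))-\bar B_1(x),h\r>_H\,dr$ evaluated at $x=u_\e(k\d_\e),\,y=v_\e(k\d_\e)$, which is exactly what Hypothesis \ref{H7} controls: by \eqref{sola7bis} it is bounded, $\Pro$-a.s., by $c\,\d_\e\,\a(\d_\e/\e)\,(1+|u_\e(k\d_\e)|_H+|v_\e(k\d_\e)|_H)$. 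Summing over the $O(T/\d_\e)$ blocks and taking expectations, the total contribution is at most $c_T\,\a(\zeta_\e)\,(1+|x|_H+|y|_H)$ by \eqref{unifue} and \eqref{unifve}, which vanishes since $\zeta_\e\to\infty$ and $\a(T)\to 0$.

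The main obstacle, exactly as in Lemma \ref{lemma51}, is the simultaneous tuning of $\d_\e$ and $\zeta_\e=\d_\e/\e$: one needs $\d_\e\to 0$ (so the frozen-slow-component replacement error, governed by the H\"older increment exponent $\beta(\a)$ in \eqref{sola30}, vanishes) while $\zeta_\e\to\infty$ (so the averaging factor $\a(\zeta_\e)$ vanishes), and both must be compatible with the exponential factor $\exp(c\,\zeta_\e^{2^{\bar n+1}})$ arising in the Gronwall step of \eqref{sola52}. The logarithmic choice $\zeta_\e=(\log\e^{-\kappa_2})^{\kappa_1}$ with $\kappa_1=2^{-(\bar n+1)}$ and $\kappa_2<2\beta(\a)c^{-1}$ already used in Step 1 of Lemma \ref{lemma51} resolves this, so I would simply invoke that choice. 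One difference from Lemma \ref{lemma51} worth noting: since we want $\E\sup_{t\in[0,T]}|R_\e(t)|$ rather than a bound at fixed times, I would observe that $R_\e$ is an integral and hence $\sup_{t\le T}|R_\e(t)|\le \int_0^T|\le<B_1(u_\e(s),v_\e(s))-\bar B_1(u_\e(s)),h\r>_H|\,ds$, so the supremum introduces no extra difficulty and the block decomposition above directly bounds $\E\sup_{t\le T}|R_\e(t)|$.
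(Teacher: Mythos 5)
Your core argument is indeed the paper's own proof: the block partition with $\zeta_\e=\d_\e/\e=\le(\log \e^{-\kappa_2}\r)^{\kappa_1}$, the auxiliary process $\hat v_\e$ from \eqref{hatve}, the replacement estimates via \eqref{sola30} and \eqref{sola52}, and the treatment of the averaging block by the identity in law with $\tilde v^{u_\e(k\d_\e),v_\e(k\d_\e)}(\cdot/\e)$, the Markov property, the change of variables $s\mapsto s/\e$ and Hypothesis \ref{H7} all match the paper's Steps 1 and 2. But your final remark --- that the supremum can be disposed of by bounding $\sup_{t\leq T}|R_\e(t)|\leq \int_0^T\le|\le<B_1(u_\e(s),v_\e(s))-\bar B_1(u_\e(s)),h\r>_H\r|\,ds$ and then running ``the block decomposition above'' --- is a genuine error, and it occurs at exactly the step where the proof lives or dies. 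Moving the absolute value inside the time integral destroys the cancellation that the Khasminskii scheme exploits: the integrand is \emph{not} small pointwise or in $L^1(\Omega)$ (the fast variable oscillates, and $B_1(u_\e(s),v_\e(s))$ deviates from its average $\bar B_1(u_\e(s))$ by a quantity of order one); only its time averages over blocks that are long in the fast time scale are small. Hypothesis \ref{H7} encodes precisely this: \eqref{sola7bis} controls $\E\,\le|\frac 1T\int_t^{t+T}\cdots\,ds\r|$, with the absolute value \emph{outside} the integral, and gives no control on $\E\int\le|\cdots\r|\,ds$. After your crude bound, the ``genuinely averaging'' middle term becomes $\E\int_{k\d_\e}^{(k+1)\d_\e}\le|\le<B_1(u_\e(k\d_\e),\hat v_\e(s))-\bar B_1(u_\e(k\d_\e)),h\r>_H\r|\,ds$, to which \eqref{sola7bis} cannot be applied and which does not tend to zero.

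The repair is the one the paper implements. Use the absolute-value-inside bound only for the two replacement pieces, $\le<B_1(u_\e(s),v_\e(s))-B_1(u_\e([s/\d_\e]\d_\e),\hat v_\e(s)),h\r>_H$ and $\le<\bar B_1(u_\e([s/\d_\e]\d_\e))-\bar B_1(u_\e(s)),h\r>_H$, which really do vanish in $L^1(\Omega\times[0,T])$ by the Lipschitz continuity of $B_1$ and $\bar B_1$ together with \eqref{sola30} and \eqref{sola52} (and here, since $B_1$ is Lipschitz, no second-moment weights are needed: the quadratic factors in Lemma \ref{lemma51} arise only from the $G_1$ terms). For the averaging piece keep the absolute value outside each block integral, estimating $\E\sup_{t\leq T}\le|\int_0^t\cdots\,ds\r|$ by $\sum_{k}\E\,\le|\int_{k\d_\e}^{(k+1)\d_\e}\le<B_1(u_\e(k\d_\e),\hat v_\e(s))-\bar B_1(u_\e(k\d_\e)),h\r>_H\,ds\r|$ plus a single partial-block remainder (harmless: by Cauchy--Schwarz it is $O(\d_\e^{1/2})$, the integrand being bounded in $L^2$ by \eqref{unifue} and \eqref{sw44}). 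Only then does your blockwise computation apply, giving the per-block bound $\d_\e\,\a(\zeta_\e)\le(1+\E\,|u_\e(k\d_\e)|_H+\E\,|v_\e(k\d_\e)|_H\r)|h|_H$, and after summing the $O(T/\d_\e)$ blocks the total $c_T\,\a(\zeta_\e)\le(1+|x|_H+|y|_H\r)|h|_H\to 0$; note that controlling $\E\,|v_\e(k\d_\e)|_H$ at the discrete times requires the pointwise-in-time bound \eqref{unifvebis}, not the time-integrated \eqref{unifve} you cite. With the triangle inequality applied in this order, the rest of your proposal is correct and coincides with the paper's proof.
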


\begin{proof}
{\em Step 1.}
We prove that 
\begin{equation}
\label{sola60} \lim_{\e\to 0}\,\E \sup_{t \in\,[0,T]}\le|\int_0^t
\le<B_1(u_\e([s/\d_\e]\d_\e),\hat{v}_\e(s))-\bar{B}_1(u_\e(s)),h\r>_H\,ds\r|=0,
\end{equation}
where $\hat{v}_\e(t)$ is the solution of problem    \eqref{hatve}  
Let $k=0,\ldots,[T/\d_\e]$ be fixed. If we take a noise
$\tilde{w}^{Q_2}(t)$ independent of $u_\e(k \d_\e)$ and $v_\e(k
\d_\e)$ in the fast motion equation \eqref{fast}, it is immediate
to check that  the process \[z_{1,\e}(s):=\tilde{v}^{u_\e(k
\d_\e),v_\e(k \d_\e)}(s/\e),\ \ \ \ s \in\,[0,\d_\e],\] coincides
in distribution with the process
\[z_{2,\e}(s):=\hat{v}_\e(k\d_\e+s),\ \ \ \ s \in\,[0,\d_\e].\]
 This means that
\[\begin{array}{l}
\ds{\E\le|\int_{k\d_\e}^{(k+1)\d_\e}\le<B_1(u_\e(k
\d_\e),\hat{v}_\e(s))-\bar{B}_1(u_\e(k
\d_\e)),h\r>_H\,ds\r|}\\
\vs \ds{=\E\le|\int_{0}^{\d_\e}\le<B_1(u_\e(k
\d_\e),z_{2,\e}(s))-\bar{B}_1(u_\e(k \d_\e)),h\r>_H\,ds\r|}\\
\vs \ds{={\E}\le|\int_{0}^{\d_\e}\le<B_1(u_\e(k
\d_\e),z_{1,\e}(s))-\bar{B}_1(u_\e(k \d_\e)),h\r>_H\,ds\r|}\\
\vs \ds{=\d_\e {\E}\le|\frac
1{\zeta_\e}\int_{0}^{\zeta_\e}\le<B_1(u_\e(k
\d_\e),\tilde{v}^{u_\e(k \d_\e),v_\e(k
\d_\e)}(s))-\bar{B}_1(u_\e(k \d_\e)),h\r>_H\,ds\r|.}
\end{array}\]
Hence, according to Hypothesis \ref{H7}, due to \eqref{unifue} and
\eqref{unifvebis}, we have
\[\begin{array}{l}
\ds{\E\,\le|\int_{k\d_\e}^{(k+1)\d_\e}\le<B_1(u_\e(k
\d_\e),\hat{v}_\e(s))-\bar{B}_1(u_\e(k \d_\e)),h\r>_H\,ds\r|}\\
\vs \ds{ \leq \d_\e\,\a(\zeta_\e)\le(1+{\E}\,|u_\e(k
\d_\e)|_H+{\E}\,|v_\e(k \d_\e)|_H\r)\,|h|_H\leq
c_T\le(1+|x|_H+|y|_H\r)|h|_H\,\d_\e\,\a(\zeta_\e).}
\end{array}\]
This allows to obtain \eqref{sola60}. Actually, we have
\[\begin{array}{l}
\ds{\E \sup_{t \in\,[0,T]}\le|\int_0^t
\le<B_1(u_\e([s/\d_\e]\d_\e),\hat{v}_\e(s))-\bar{B}_1(u_\e(s)),h\r>_H\,ds\r|}\\
\vs \ds{\leq \sum_{k=0}^{\le[T/\d_\e\r]} \E \le|\int_{k
\d_\e}^{(k+1) \d_\e}
\le<B_1(u_\e([s/\d_\e]\d_\e),\hat{v}_\e(s))-\bar{B}_1(u_\e(k \d_\e)),h\r>_H\,ds\r|}\\
\vs \ds{+\sum_{k=0}^{\le[T/{\d_\e}\r]}\int_{k \d_\e}^{(k+1) \d_\e}
\E \,\le|\le<\bar{B}_1(u_\e(k
\d_\e))-\bar{B}_1(u_\e(s)),h\r>_H\r|\,ds,}
\end{array}\]
and, as $\bar{B}_1$ is Lipschitz continuous, thanks to
\eqref{sola30} we get
\[\begin{array}{l}
\ds{\E \sup_{t \in\,[0,T]}\le|\int_0^t
\le<B_1(u_\e([s/\d_\e]\d_\e),\hat{v}_\e(s))-\bar{B}_1(u_\e(s)),h\r>_H\,ds\r|}\\
\vs \ds{\leq c_T\le[T/\d_\e\r]\le(1+|x|_H+|y|_H\r)\d_\e
\a(\zeta_\e) +c_T \le[T/\d_\e\r]\le(1+|x|_\a+|y|_H\r)
|h|_H\,\d_\e^{1+\beta(\a)},}
\end{array}\]
and \eqref{sola60} follows.\medskip

{\em Step 2.} It holds
\[\lim_{\e\to 0}\,\sup_{t \in\,[0,T]}\E\,|R_\e(t)|=0.\]
Thanks to \eqref{sola60}, we have
\[\begin{array}{l}
\ds{\limsup_{\e\to 0}\,\E \sup_{t \in\,[0,T]}\le|\int_0^t
\le<B_1(u_{\e}(s),v_\e(s))-\bar{B}_1(u_{\e}(s)),h\r>_H\,ds\r|}\\
\vs \ds{\leq \limsup_{\e\to 0}\,\E\,\int_0^T
\le|\le<B_1(u_{\e}(s),v_\e(s))-B_1(u_{\e}([s/\d_\e]\d_\e),\hat{v}_\e(s)),h\r>_H\r|\,ds}\\
\vs \ds{ +\lim_{\e\to 0}\,\E \sup_{t \in\,[0,T]}\le|\int_0^t
\le<B_1(u_{\e}([s/\d_\e]\d_\e),\hat{v}_\e(s))-\bar{B}_1(u_{\e}(s)),h\r>_H\,ds\r|}\\
\vs \ds{=\limsup_{\e\to 0}\,\E\,\int_0^T
\le|\le<B_1(u_{\e}(s),v_\e(s))-B_1(u_{\e}([s/\d_\e]\d_\e),\hat{v}_\e(s)),h\r>_H\r|\,ds.}
\end{array}\]
By using \eqref{sola30} we have
\[\begin{array}{l}
\ds{\E\,\int_0^T
\le|\le<B_1(u_{\e}(s),v_\e(s))-B_1(u_{\e}([s/\d_\e]\d_\e),\hat{v}_\e(s)),h\r>_H\r|\,ds}\\
\vs \ds{\leq c
\int_0^T\le(\,\E\,|u_{\e}(s)-u_{\e}([s/\d_\e]\d_\e)|_H+\E\,|v_\e(s)-\hat{v}_\e(s)|_H\r)\,ds\,|h|_H}\\
\vs \ds{\leq
c_T(1+|x|_\a+|y|_H)|h|_H\,\d_\e^{\beta(\a)}+T\,\sup_{t
\in\,[0,T]}\E\,|v_\e(t)-\hat{v}_\e(t)|_H\,|h|_H,}
\end{array}\]
and then, due to \eqref{sola52}, we have
\[\lim_{\e\to 0} \E\,\int_0^T
\le|\le<B_1(u_{\e}(s),v_\e(s))-B_1(u_{\e}([s/\d_\e]\d_\e),\hat{v}_\e(s)),h\r>_H\r|\,ds=0.\]
This allows to conclude.
\end{proof}

\begin{Theorem}
\label{averaging} Assume that
the diffusion coefficient $g_1$ in the slow motion equation does not depend on the fast variable $v_\e$  and fix $x \in\,D((-A_1)^\a)$, for some
$\a>0$, and  $y \in\,H$. Then, under Hypotheses \ref{H1}, \ref{H2} and \ref{H7}
for any $T>0$ and $\eta>0$ we have
\begin{equation}
\label{wit30} \lim_{\e\to
0}\mathbb{P}\,\le(|u_\e-\bar{u}|_{C([0,T];H)}>\eta\,\r)=0,
\end{equation}
where $\bar{u}$ is the solution of the averaged equation
\eqref{averaged}.
\end{Theorem}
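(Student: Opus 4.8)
The plan is to deduce convergence in probability from the vanishing of the remainder $R_\e$ proved in Lemma \ref{re}, combined with the characterization of convergence in probability due to Gy\"ongy and Krylov \cite{gk}. Since $g_1$ does not depend on the fast variable, the diffusion coefficient $G_1(u_\e)$ depends only on the slow component, so identity \eqref{byparts} exhibits $u_\e$ as a solution of the averaged equation \eqref{averaged} perturbed by the remainder $R_\e$, whose supremum tends to zero in $L^1(\Omega)$. Recall that the Gy\"ongy-Krylov criterion states that the family $\{u_\e\}_{\e \in\,(0,1]}$ converges in probability in $C([0,T];H)$ if and only if, for every pair of sequences $\e_n\downarrow 0$ and $\tilde\e_n\downarrow 0$, the joint laws $\mathcal{L}(u_{\e_n},u_{\tilde\e_n})$ possess a subsequence converging weakly to a probability measure supported on the diagonal of $C([0,T];H)\times C([0,T];H)$.

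Accordingly, I would fix two such sequences and consider the triples $(u_{\e_n},u_{\tilde\e_n},w^{Q_1})$, realized in the Polish space $C([0,T];H)\times C([0,T];H)\times C([0,T];U_1)$, where $U_1\supset H$ is an auxiliary Hilbert space into which $H$ embeds by a Hilbert-Schmidt map, so that $w^{Q_1}$ is a genuine $U_1$-valued Wiener process. By Corollary \ref{corollary1} each of the families $\{\mathcal{L}(u_{\e_n})\}_n$ and $\{\mathcal{L}(u_{\tilde\e_n})\}_n$ is tight, while the law of $w^{Q_1}$ is fixed; hence the joint family is tight and, passing to a subsequence, the laws of the triples converge weakly. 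By the Skorokhod representation theorem I would then produce, on a new probability space, copies $(\tilde u_n,\tilde{\tilde u}_n,\tilde w_n)$ of these triples converging almost surely to a limit $(\tilde u,\tilde{\tilde u},\tilde w)$, with $\tilde w$ again a Wiener process with covariance $Q_1$ relative to a suitable filtration.

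The central step is to pass to the limit, for each fixed $h \in\,D(A_1)\cap L^\infty(D)$, in the identity \eqref{byparts} written for $\tilde u_n$ and for $\tilde{\tilde u}_n$. The deterministic terms converge thanks to the almost sure convergence in $C([0,T];H)$ together with the uniform moment bounds of Proposition \ref{prop1}; the stochastic integral $\int_0^t\le<G_1(\tilde u_n(s))h,d\tilde w_n(s)\r>_H$ converges to $\int_0^t\le<G_1(\tilde u(s))h,d\tilde w(s)\r>_H$ by the standard argument based on the Lipschitz continuity of $G_1$ and the uniform $L^2$ estimates; and the remainder $\tilde R_n$, having the same law as $R_{\e_n}$ (respectively $R_{\tilde\e_n}$), satisfies $\tilde{\E}\,\sup_{t\leq T}|\tilde R_n(t)|\to 0$ by Lemma \ref{re}. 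Consequently both $\tilde u$ and $\tilde{\tilde u}$ are mild solutions of the averaged equation \eqref{averaged} driven by the common noise $\tilde w$, and the uniqueness established in Subsection \ref{4.2} forces $\tilde u=\tilde{\tilde u}$ almost surely. Thus the limiting joint law is concentrated on the diagonal, and the Gy\"ongy-Krylov criterion yields that $u_\e$ converges in probability in $C([0,T];H)$ to some limit $U$. Finally, passing to the limit in \eqref{byparts} on the original probability space along an almost surely convergent subsequence, and using once more that $R_\e\to 0$, identifies $U$ with the unique mild solution $\bar u$ of \eqref{averaged}, which gives \eqref{wit30}.

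I expect the main obstacle to lie in the passage to the limit in the stochastic integral after the change of probability space: one must verify that $\tilde w$ is a $Q_1$-Wiener process with respect to the filtration generated by $(\tilde u,\tilde{\tilde u},\tilde w)$ and that the approximating integrands converge in the sense required for stochastic integrals, while simultaneously recovering the full weak formulation uniformly over a dense set of test functions $h$.
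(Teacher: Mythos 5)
Your proposal is correct and follows essentially the same route as the paper: the identity \eqref{byparts} with the remainder $R_\e$ killed by Lemma \ref{re}, tightness from Corollary \ref{corollary1}, the Gy\"ongy--Krylov criterion with a Skorokhod representation of pairs of subsequences, passage to the limit in the weak formulation, and uniqueness for the averaged equation \eqref{averaged} to force the limit law onto the diagonal. Your explicit inclusion of the driving noise $w^{Q_1}$ in the Skorokhod tuple (via a Hilbert--Schmidt embedding into an auxiliary space) is a careful way of handling a point the paper treats more tersely by simply introducing a new noise $\hat{w}^{Q_1}$ on the new probability space, but it is the same argument, not a different one.
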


\begin{proof}
We have seen that the family of probability
measures $\{\mathcal{L}(u_\e)\}_{\e \in\,(0,1]}$ is tight in
$\mathcal{P}(C([0,T];H))$, for any fixed $T>0$. Moreover, due to
Lemma \ref{re} the remainder $R_\e(t)$ converges to zero in
$L^1(\Omega,\mathcal{F},\mathbb{P})$, uniformly with respect to $t
\in\,[0,T]$. Then the proof of \eqref{wit30} proceeds as
in \cite[Theorem 5.4]{cf}. 

Since the sequence $\{\mathcal{L}(u_\e)\}_{\e>0}$ is tight in
$\mathcal{P}(C([0,T];H))$, if we fix any two sequences
$\{\e_n\}_{n \in\,\nat}$ and $\{\e_m\}_{m \in\,\nat}$ which
converge to zero, due to the Skorokhod theorem we can find
subsequences $\{\e_{n(k)}\}_{k \in\,\nat}$ and $\{\e_{m(k)}\}_{k
\in\,\nat}$ and a sequence
\[\{X_k\}_{k \in\,\nat}:=\le\{(u_1^k,u_2^k)\r\}_{k \in\,\nat}\subset
\mathcal{C}:=C([0,T];H)\times
C([0,T];H),\] defined on some probability
space $(\hat{\Omega},\hat{\F},\hat{\Pro})$, such that
\begin{equation}
\label{sola100}
\mathcal{L}(X_k)=\mathcal{L}((u_{\e_{n(k)}},u_{\e_{m(k)}})), \ \
\ \ \ k \in\,\nat, \end{equation} and $X_k$ converges  to some
$X:=(u_1,u_2) \in\,\mathcal{C}$, $\hat{\Pro}$-a.s. We will show that if  $u_1=u_2$, then  there exists some $u
\in\,C([0,T];H)$ such that the whole sequence $\{u_\e\}_{\e>0}$
converges to $u$ in probability.

For $k \in\,\nat$ and $i=1,2$, we define
\begin{equation}
\label{sola70} \begin{array}{l} \ds{R^k_i(t):=\langle
u^k_i(t),h\rangle_H-\langle x,h\rangle_H-\int_0^t\langle
u^k_i(s),A_1 h\rangle_H}\\
\vs \ds{-\int_0^t\langle
\bar{B}_1(u^k_i(s)),h\rangle_H-\int_0^t\langle
G_1(u^k_1(s))h,d\hat{w}^{Q_1}(s)\rangle_H,}
\end{array}
\end{equation}
where
\[\hat{w}(t,\xi)=\sum_{j \in\,\nat}Q_1 e_j(\xi) \hat{\beta}_j(t),\]
and $\{\hat{\beta}_j(t)\}_{j \in\,\nat }$ is  a sequence of mutually independent standard Brownian motions on $(\hat{\Omega},\hat{\F},\hat{\Pro})$.
In view of \eqref{sola100},  by using Lemma \ref{re}, we have
\[\lim_{k\to \infty}\hat{\E} \sup_{t \in\,[0,T]}\,|R^k_i(t)|=0,\ \ \ \ i=1,2,\]
then, if we pass possibly to a subsequence, we can take the
$\hat{\mathbb{P}}$-almost sure  limit in \eqref{sola70}, and we
get that  both $u_1$ and $u_2$ solve the problem
\[\begin{array}{l}
\ds{ \le<u(t),h\r>_H=\le<x,h\r>_H+\int_0^t \le<u(s),A_1
h\r>_H\,ds}\\
\vs
\ds{+\int_0^t\le<\bar{B}_1(u(s)),h\r>_H\,ds+\int_0^t\le<G_1(u(s))h,d\hat{w}^{Q_1}(s)\r>_H,}
\end{array}\] for any $h \in\,D(A_1)\cap L^\infty(D)$. This means
that $u_1=u_2$, as they coincide with the unique solution of
equation
\[du(t)=\le[A_1 u(t)+\bar{B}_1(u(t))\r]\,dt+G_1(u(t))\,d\hat{w}^{Q_1}(t),\ \ \ \ \ u(0)=x,\]
 and then the sequences $\{{\cal L}(u_{\e_{n(k)}})\}$ and $\{{\cal L}(u_{\e_{m(k)}})\}$ weakly converge to the same limit.

This allows to conclude that \eqref{wit30} is true, as in  Gy\"ongy and Krylov \cite[Lemma 1.1]{gk} it is proved that if $\{Z_n\}$  is a sequence of random element in a Polish space $X$, then $\{Z_n\}$ converges in probability to a $X$-valued random element if and only if for every pair of subsequences $\{Z_l\}$ and $\{Z_m\}$ there exists a subsequence $v_k=(Z_{l_k},Z_{m_{k}})$ converging weakly to a random element $v$ supported on the diagonal of $X\times X$.
\end{proof}

\end{document}